\documentclass[11pt]{amsart}
\usepackage[sorted-cites]{amsrefs}
\usepackage[matrix,arrow,tips,curve]{xy}
\usepackage[english]{babel}
\usepackage{amsmath}
\usepackage{amssymb}
\usepackage{comment}
\numberwithin{equation}{section}
\usepackage{ mathrsfs } 
\theoremstyle{plain}
\usepackage{enumerate}
\usepackage{graphicx}
\usepackage{hyperref}
\usepackage{color}
\DeclareMathAlphabet{\pazocal}{OMS}{zplm}{m}{n}
\usepackage{lipsum}

\usepackage{xcolor}



\newtheorem{theorem}{Theorem} [section]
\newtheorem{lemma}{Lemma}[section]
\newtheorem{proposition}{Proposition}[section]

\newtheorem{cor}{Corollary}[section]

\newtheorem{example}{Example}[section]
\theoremstyle{remark}
\newtheorem{remark}{Remark}[section]




\newcommand{\func}[3]{#1:#2\longrightarrow #3}

\hypersetup{pdfpagemode=UseNone} 
\hypersetup{pdfstartview=FitH}    

\begin{document}

	\title[Some Rigidity results for $\lambda$-self-expander]{   Some rigidity properties for  $\lambda$-self-expanders }

	\subjclass[2000]{Primary: 53C42;
		Secondary: 58J50}

	\thanks{The first author is supported by CAPES of Brasil.  The second author  is partially supported by CNPq and Faperj of Brazil.}
	
	\address{Instituto de Matem\'atica e Estat\'\i stica, Universidade Federal Fluminense,
		Niter\'oi, RJ 24020, Brazil}

	\author[Saul Ancari]{Saul Ancari}

	\email{sa\_ancari@id.uff.br}
	
	\author[Xu Cheng]{Xu Cheng}
	
	\email{xucheng@id.uff.br}

	\begin{abstract} $\lambda$-self-expanders $\Sigma$  in $\mathbb{R}^{n+1}$ are the solutions of the isoperimetric problem  with respect to the same weighted area form as in the study of the self-expanders.
			In this paper,  we mainly extend the results on self-expanders which we obtained in \cite{ancari2020volum}  to $\lambda$-self-expanders. We prove some results that characterize the hyperplanes, spheres and cylinders as  $\lambda$-self-expanders.  We also discuss the area growths and the finiteness of
the weighted areas under the control of the growth of the mean curvature.

	\end{abstract}
	
	\maketitle
	

	\section{introduction}
	
 A $\lambda$-self-expander is a    hypersurface  $\Sigma$ immersed in $\mathbb{R}^{n+1}$ whose mean curvature $H$ satisfies  the equation
\begin{align}\label{le-1}
	 H=-\frac{\langle x,{\bf n}\rangle}{2}+\lambda,
\end{align}
where $\lambda\in\mathbb{R}$ is  real number,  $x$ is the position vector in $\mathbb{R}^{n+1}$ and  ${\bf n}$ is the  outward unit normal field on $\Sigma$. Such hypersurfaces appear as solutions to the weighted isoperimetric problem of weight $e^{\frac{|x|^2}{4}}$ and  are characterized as  critical points of the weighted area functional 
\begin{align}\label{lexp1}
	\mathcal{A}(\Sigma)=\int_{\Sigma}e^{\frac{|x|^2}{4}}d\sigma,
\end{align}
 for  any compact normal variations $\func{F}{(-\varepsilon,\varepsilon)\times \Sigma}{\mathbb{R}^{n+1}}$   which  satisfies $\int_{\Sigma}\varphi e^{\frac{|x|^2}{4}}d\sigma=0$ , where  $\varphi=\langle \partial_tF(0,x),{\bf n}(x)\rangle$. \\
 
 On the other hand,  they can also be characterized as  constant weighted mean curvature hypersurfaces of weight $e^{\frac{|x|^2}{4}}$ (see Section \ref{le-preli}).\\

In the case that  $\lambda=0$,  $\lambda$-self-expanders  just are self-expanders of the mean curvature flows,  which  are  critical points of the weighted area  functional \eqref{lexp1}. \\


  In recent years, as the  models of singularity for the mean curvature flow,   self-shrinkers, self-expanders and translating solitons have been studied very much.
  See, for instance \cite{smoczyk2020self,ancari2020volum,halldorsson2012self}, \cite{huisken1990asymptotic,colding,rimoldi2014classification,le2011blow,cao2013gap} and   \cite{tasayco2017uniqueness,martin2015topology,impera2017rigidity,dung2020rigidity,ma2020bernstein,xin2015translating}, respectively. We would like to  mention that in some sense the research of self-expanders brings out the interest on $\lambda$-self-expanders. The analogous phenomenon has already happened on the  self-shrinkers  and translating solitons of MCF.    There are notions of constant Gaussian-weighted mean curvature hypersurfaces (also called $\lambda$-hypersurfaces) and  $\lambda$-translating solitons, which are linked with self-shrinkers  and translating solitons, respectively (see, for instance,  the work of   Q. Cheng and Wei  \cite{cheng2018complete},  McGonagle and Ross \cite{mcgonagle2015hyperplane}, and López in \cite{lopez2018compact}).  Various results have been obtained on $\lambda$-hypersurfaces and $\lambda$-translating solitons. See, for instance \cite{mcgonagle2015hyperplane,cheng2018complete,guang2018gap,cheng2016rigidity,ancari2020rigidity,wei2019note} and \cite{lopez2018compact,lopez2018invariant,li2020complete}, respectively. In \cite{lopez2020ruled}, López  estudied $\lambda$-self-expander surfaces  in $\mathbb{R}^3$ and  proved that if a  $\lambda$-self-expander surface is ruled surface, 
	then  it must be  a cylindrical surface.\\
	
	In this paper,  we study $\lambda$-self-expanders.  All  hyperplanes, Spheres $\mathbb{S}^{n}_r(0)$ centered at the origin of radius $r>0$, and  cylinders $\mathbb{S}^{k}_r(0)\times\mathbb{R}^{n-k}$, where $1\leq k\leq n-1$ and  $r>0$ are  examples of $\lambda$-self-expanders (see the details in Section \ref{le-preli}).   It is  well  known that there exists no closed self-expanders in $\mathbb{R}^{n+1}$ (see, for instance \cite{cao2013gap}).  But the situation  is different for $\lambda\neq 0$. 
	All spheres $\mathbb{S}^n_r(0)$ centered at the origin of radius $r>0$ are $\lambda$-self-expanders with $\lambda=\frac nr+\frac r2\geq \sqrt{2n}$ (see Example \eqref{Ex2} in Section \ref{le-preli}).\\

 Throughout the paper, the following convention of notation are used. The notation
 $A$ denotes the second fundamental form of   $\lambda$-self-expander $\Sigma$.   $\Sigma$ is called convex if its second fundamental form $A$  is negative definite, that is,  the eigenvalues $\lambda_i$ of $A$ satisfy $\lambda_i\leq 0$, where  $A(e_i)=-\bar{\nabla}_{e_i}{\bf n}=\lambda_i e_i$, $e_i$  is a local orthonormal frame.   $\Sigma$ is called mean convex if its mean curvature  $H=-\sum_{i=1}^n\lambda_i$ is nonnegative. We use   $x^{\top}$  to denote the tangent component of the position vector $x$.\\ 
	
We give the lower bound of $\lambda$  and obtain a gap theorem for  closed  $\lambda$-self-expanders as follows:
	\begin{theorem}\label{qg}
		Let  $\Sigma$ be a closed $\lambda$-self-expander immersed in $\mathbb{R}^{n+1}$. Then
		\begin{itemize}
		\item [(i)]  $\lambda\geq\sqrt{2n}$. Moreover, the sphere $\mathbb{S}^{n}_{\sqrt{2n}}(0)$ is the only closed  $\lambda$-self-expander   with $\lambda=\sqrt{2n}$. 
		\item [(ii)] If 
		\begin{align}\label{f5}
			|A|^2\leq-\frac{1}{2}+ \frac{\lambda\left( \lambda-\sqrt{\lambda^2-2n}\right) }{2n},
		\end{align}
		then $\Sigma$ is a sphere $\mathbb{S}^n_r(0)$ with $r=\lambda+\sqrt{\lambda^2-2n}$.
		\end{itemize}
	\end{theorem}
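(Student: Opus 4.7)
My plan is to apply the Hopf maximum principle to $|x|^2|_\Sigma$ on the closed hypersurface. I would start from the standard identity $\Delta_\Sigma|x|^2 = 2n - 2H\langle x,{\bf n}\rangle$ and substitute the $\lambda$-self-expander equation to obtain
\[
\Delta_\Sigma|x|^2 = 2n - 2\lambda\langle x,{\bf n}\rangle + \langle x,{\bf n}\rangle^2.
\]
At a point $p_0\in\Sigma$ where $|x|^2$ attains its maximum, $\nabla|x|^2 = 2x^\top$ vanishes, so $x(p_0)$ is normal to $\Sigma$ and $u_0:=\langle x(p_0),{\bf n}(p_0)\rangle$ satisfies $u_0^2 = |x(p_0)|^2$. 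With the outward-normal convention for closed embedded $\Sigma$, $u_0>0$ at the point farthest from the origin, and the inequality $\Delta_\Sigma|x|^2\leq 0$ at $p_0$ collapses to the quadratic
\[
u_0^2 - 2\lambda u_0 + 2n \leq 0,
\]
whose solvability in $u_0>0$ forces $\lambda\geq\sqrt{2n}$. In the equality case $\lambda=\sqrt{2n}$, the quadratic becomes $(u_0-\sqrt{2n})^2\leq 0$, so $u_0 = |x|_{\max} = \sqrt{2n}$; thus $\Sigma\subset\overline{B_{\sqrt{2n}}(0)}$ and touches $\mathbb{S}^n_{\sqrt{2n}}(0)$ tangentially at $p_0$. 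The strong maximum principle, applied to the quasilinear elliptic equation $H+\langle x,{\bf n}\rangle/2=\lambda$ for the normal graph of $\Sigma$ over the reference sphere, then forces $\Sigma = \mathbb{S}^n_{\sqrt{2n}}(0)$.

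\textbf{Part (ii).} I would set $r := \lambda+\sqrt{\lambda^2-2n}$ and note, using the identity $2n/r = \lambda-\sqrt{\lambda^2-2n}$, that the hypothesis rewrites as $|A|^2\leq n/r^2$, which matches the value of $|A|^2$ on $\mathbb{S}^n_r(0)$ exactly. The quadratic bound from part (i) yields $u_0\leq r$, so $|x|\leq r$ throughout $\Sigma$, giving $\langle x,{\bf n}\rangle\leq|x|\leq r$. For the reverse estimate, I would combine the algebraic inequality $|A|^2\geq H^2/n$ with the hypothesis to obtain $H^2\leq n^2/r^2$. Since $\langle x,{\bf n}\rangle\leq r$ implies $H = \lambda-\langle x,{\bf n}\rangle/2\geq(\lambda-\sqrt{\lambda^2-2n})/2>0$, this yields $H\leq n/r$, which rearranges to $\langle x,{\bf n}\rangle\geq 2\lambda-2n/r=r$. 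Combining both inequalities, $\langle x,{\bf n}\rangle\equiv r$, and with $|x|\leq r$ this forces $|x|\equiv r$. The resulting equality in $|A|^2\geq H^2/n$ then forces $\Sigma$ to be totally umbilic; being a closed immersed $n$-dimensional hypersurface lying on $\mathbb{S}^n_r(0)$, it must coincide with that sphere.

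\textbf{Main obstacle.} The sharpest step will be the rigidity conclusion in part (i): passing from the tangential inclusion $\Sigma\subset\overline{B_{\sqrt{2n}}(0)}$ to the global equality $\Sigma=\mathbb{S}^n_{\sqrt{2n}}(0)$. This requires writing the $\lambda$-self-expander condition as a uniformly elliptic quasilinear PDE for the normal graph function of $\Sigma$ over the reference sphere and invoking the Hopf strong maximum principle. Part (ii), by contrast, is a pinching argument built on top of the quadratic bound from (i) and needs no additional PDE input beyond the Cauchy--Schwarz inequality $|A|^2\geq H^2/n$.
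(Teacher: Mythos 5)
Your proposal is correct, and on the two substantive points it takes a genuinely different route from the paper. The inequality $\lambda\geq\sqrt{2n}$ is obtained essentially as in the paper: your condition $\Delta|x|^2\leq 0$ at the farthest point is exactly the paper's $H(p)\geq n/|p|$ combined with AM--GM, just packaged as a quadratic in $u_0=\langle x,{\bf n}\rangle$. For the rigidity when $\lambda=\sqrt{2n}$, however, the paper stays intrinsic: from \eqref{f2} it gets $\mathscr{L}|x|^2\geq(|x|-\sqrt{2n})^2\geq 0$ on all of $\Sigma$ and concludes that $|x|$ is constant by the maximum principle for the drift Laplacian on the closed manifold. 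That is lighter than your tangency argument, which is valid but requires writing both hypersurfaces as normal graphs, controlling the sign of the zeroth-order coefficient in the linearized equation before invoking the strong maximum principle, and then globalizing by an open--closed argument. For part (ii) your argument is cleaner and more elementary than the paper's: after the key observation that the hypothesis \eqref{f5} is exactly $|A|^2\leq n/r^2$ with $r=\lambda+\sqrt{\lambda^2-2n}$, you squeeze $H$ between $n/r$ from below (via $\langle x,{\bf n}\rangle\leq|x|\leq r$, the bound $|x|\leq r$ coming from the quadratic at the maximum of $|x|$) and $n/r$ from above (via $H\leq\sqrt{n|A|^2}$, the sign of $H$ being already fixed), which forces $\langle x,{\bf n}\rangle\equiv r$ and hence $|x|\equiv r$ pointwise, with no further appeal to the maximum principle. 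The paper instead works at the minimum of $|x|$, rules out $p=0$ and the wrong sign of $\langle x,{\bf n}\rangle$ there, deduces $\min_{\Sigma}|x|\geq r$, and then applies the maximum principle to $\mathscr{L}|x|^2\geq 0$ a second time. Both proofs rest on the same implicit orientation convention (outward normal at the farthest point, so $\langle x,{\bf n}\rangle>0$ there), which you at least flag explicitly; and both share the minor caveat that for $n=1$ an immersed curve covering the circle multiply is not excluded by the final step.
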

	 Theorem \ref{qg}(ii) gives a pinching condition of $|A|$ such that a compact $\Sigma$ is a sphere. This result can be compared with the following theorem proved by Guang \cite{guang2018gap} for $\lambda$-hypersurfaces.
	\begin{theorem}[\cite{guang2018gap}]
	    Let $\Sigma\subset\mathbb{R}^{n+1}$ be a closed $\lambda$-hypersurface  with $\lambda\geq0$. If $\Sigma$ satisfies
\begin{align*}
    	|A|^2\leq\frac{1}{2}+ \frac{\lambda\left( \lambda+\sqrt{\lambda^2+2n}\right) }{2n},
\end{align*}
then $\Sigma$ is a round sphere with radius $\sqrt{\lambda^2+2n}+\lambda.$
	\end{theorem}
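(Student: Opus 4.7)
My plan is to apply the strong maximum principle to $|A|^2$ using the drift Laplacian $\mathcal{L} = \Delta - \frac{1}{2}\langle x, \nabla\cdot\rangle$ associated with the Gaussian measure $e^{-|x|^2/4}\,d\sigma$, for which the $\lambda$-hypersurface equation is the Euler--Lagrange condition of constant weighted mean curvature. Because $\Sigma$ is closed, $\mathcal{L}$ is self-adjoint against this weight and the standard maximum principle applies without any completeness or decay issues.

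The technical heart of the argument is a $\lambda$-hypersurface analogue of Colding--Minicozzi's Simons-type identity for shrinkers. Differentiating the $\lambda$-hypersurface equation gives $\nabla H = \frac{1}{2}A(x^{\top})$, and plugging this into the classical Simons identity for $\Delta h_{ij}$ yields a formula of the shape
$$
\mathcal{L}|A|^2 = 2|\nabla A|^2 + 2|A|^2\bigl(1 - |A|^2\bigr) + 2\lambda\,\mathrm{tr}(A^3),
$$
the only new feature compared with the shrinker case being the cubic contribution from $\lambda$. To control this cubic trace I would decompose $A = B + \frac{H}{n}\,\mathrm{I}$ with $B$ traceless, apply Okumura's inequality $|\mathrm{tr}(B^3)| \le \frac{n-2}{\sqrt{n(n-1)}}|B|^3$, and handle the $H$-dependent cross-term $\frac{3H}{n}|B|^2$ via a Young-type inequality calibrated against the pinching constant. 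The hypothesis $\lambda \ge 0$ fixes the sign needed to absorb the cubic.

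At a point $p$ where $|A|^2$ attains its maximum, $\mathcal{L}|A|^2(p) \le 0$ and $|\nabla A|^2(p) \ge 0$. Combining this with the cubic estimate and with the $\lambda$-hypersurface equation, which at $p$ expresses $H(p)$ in terms of $\langle x, \mathbf{n}\rangle(p)$ and $\lambda$, produces a quadratic inequality for $|A|^2(p)$ whose positive root is exactly $\frac{1}{2} + \frac{\lambda(\lambda+\sqrt{\lambda^2+2n})}{2n}$. The pinching hypothesis then forces equality everywhere, hence $|\nabla A| \equiv 0$, saturation of Okumura's inequality, and saturation of Young's inequality. Saturation of Okumura means that, up to sign, all but one eigenvalue of $B$ coincide; together with parallelism of $A$ on a closed connected hypersurface, Cartan's classification of parallel-$A$ submanifolds of $\mathbb{R}^{n+1}$ (spheres, planes, cylinders $\mathbb{S}^k\times\mathbb{R}^{n-k}$) together with closedness rules out the Clifford-type borderline and leaves only $B \equiv 0$, so $\Sigma$ is totally umbilic. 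A closed totally umbilic hypersurface of $\mathbb{R}^{n+1}$ is a round sphere, and the $\lambda$-hypersurface equation pins the center at the origin and the radius at the positive root of $r^2 - 2\lambda r - 2n = 0$, namely $r = \lambda + \sqrt{\lambda^2 + 2n}$.

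The main obstacle will be the algebraic bookkeeping in the pinching step: one must choose the Young coefficient so that the combined estimate yields precisely the sharp threshold $\frac{1}{2} + \frac{\lambda(\lambda+\sqrt{\lambda^2+2n})}{2n}$ instead of some weaker constant, and one must simultaneously track the equality cases carefully enough to identify them with umbilicity. Ruling out the saturated Okumura profile (one distinguished eigenvalue, producing a Clifford-type structure) in the closed, parallel-$A$ setting is a further delicate point that must be handled before the sphere can be concluded.
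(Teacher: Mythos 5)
This statement is quoted in the paper from \cite{guang2018gap} purely for comparison; the paper itself contains no proof of it. The closest in-paper argument is the proof of its own analogue, Theorem \ref{qg2}(ii), which (like Guang's original argument) is \emph{not} a Simons-identity argument at all: it applies the maximum principle to $|x|^2$ via the drift identity \eqref{f2}, and the curvature pinching enters only once, through $H^2\le n|A|^2$ at an extremum of $|x|$, where $x^{\top}=0$ forces $\langle x,\mathbf{n}\rangle=\pm|x|$ and the $\lambda$-equation therefore pins $H$ against $|x|$. Your proposal runs the maximum principle on $|A|^2$ instead, and this route has a genuine gap. First, at an interior maximum of $|A|^2$ you have no control whatsoever on $\langle x,\mathbf{n}\rangle(p)$ --- a critical point of $|A|^2$ carries no positional information --- so your claim that the $\lambda$-hypersurface equation ``at $p$ expresses $H(p)$ in terms of $\langle x,\mathbf{n}\rangle(p)$ and $\lambda$'' is vacuous: the only usable relation there is the universal bound $H^2\le n|A|^2$.

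Second, and decisively, the sharp constant is unreachable by algebraic control of the cubic term. The correct identity is $\mathcal{L}|A|^2=2|\nabla A|^2+|A|^2-2|A|^4\pm2\lambda\,\textup{tr}A^3$ (your $2|A|^2(1-|A|^2)$ has the wrong coefficient, and $\lambda\ge0$ does \emph{not} ``fix the sign'' of $\textup{tr}A^3$, which is unsigned without a convexity hypothesis). At a maximum of $|A|^2$, with only algebraic input one must allow the extreme value $|\textup{tr}A^3|=|A|^3$, attained by rank-one $A$ --- a configuration that Okumura's inequality on the traceless part, with any Young calibration against $H$ subject to $|H|\le\sqrt{n}|A|$, does not exclude. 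The resulting pointwise inequality $|A|^2-2|A|^4-2\lambda|A|^3\le0$ can fail to give a contradiction unless $|A|^2\le\bigl(\frac{\sqrt{\lambda^2+2}-\lambda}{2}\bigr)^2$, which is strictly below $\tfrac12$ for $\lambda>0$; by contrast Guang's threshold equals $\tfrac12+\frac{\lambda(\lambda+\sqrt{\lambda^2+2n})}{2n}=\frac{r^2}{4n}$ with $r=\lambda+\sqrt{\lambda^2+2n}$, strictly above $\tfrac12$. In the intermediate range there are admissible algebraic data for which your differential inequality has the wrong sign, so no bookkeeping of the Young coefficient can produce the stated constant (your scheme does recover the known $|A|^2\le\tfrac12$ gap at $\lambda=0$, which is exactly why it looks plausible; the $\lambda$-term is what breaks it). The missing ingredient is positional: at the extremum of $|x|$ the pinching gives $|H|\le\sqrt{n}|A|\le\frac{r}{2}$, which together with the $\lambda$-equation pins the extremal value of $|x|$ at $r$, after which the analogue of \eqref{f2} and the maximum principle force $|x|\equiv r$, as in the proof of Theorem \ref{qg2}(ii). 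A minor further point: the rigidity step you invoke is Lawson's local rigidity theorem \cite{lawson} (as used throughout the paper), not ``Cartan's classification.''
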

	 We also obtain a condition on $\lambda$ and the mean curvature $H$ so that a complete $\lambda$-self-expander must be a sphere (see Theorem \ref{le-s6}). Besides, we prove the following result:

	\begin{theorem}\label{dxs}Let  $\Sigma$, $n\geq2$, be a closed mean convex  $\lambda$-self-expander immersed in $\mathbb{R}^{n+1}$. If
		\begin{align}\label{ineq}
			H\textup{tr}A^3+|A|^4\leq0,
		\end{align}
		then $\Sigma$ must be  a sphere $\mathbb{S}^n_r(0)$.
	\end{theorem}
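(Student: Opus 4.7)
The plan is to exploit a weighted integral identity on the closed $\lambda$-self-expander $\Sigma$. Denote by $\mathcal{L}:=\Delta+\tfrac12\langle x,\nabla(\cdot)\rangle$ the drift Laplacian, which is self-adjoint with respect to the weighted measure $e^{|x|^2/4}d\sigma$; in particular $\int_\Sigma\mathcal{L}u\,e^{|x|^2/4}d\sigma=0$ for every smooth $u$, and $\int_\Sigma u\,\mathcal{L}v\,e^{|x|^2/4}d\sigma=\int_\Sigma v\,\mathcal{L}u\,e^{|x|^2/4}d\sigma$. Differentiating \eqref{le-1} yields $\nabla_i H=\tfrac12 A_{ij}x_j^\top$, and combining this with Simons' identity for hypersurfaces in $\mathbb{R}^{n+1}$ gives the basic formulas
\begin{align*}
\mathcal{L}H&=\lambda|A|^2-\Bigl(|A|^2+\tfrac12\Bigr)H,\\
\tfrac12\mathcal{L}|A|^2&=|\nabla A|^2-\tfrac12|A|^2-\lambda\operatorname{tr}(A^3)-|A|^4,\\
\mathcal{L}\langle x,\mathbf{n}\rangle&=H-|A|^2\langle x,\mathbf{n}\rangle.
\end{align*}
Writing $H=-\sum_i a_i$ and $|A|^2=\sum_i a_i^2$ in terms of the principal curvatures $a_1,\dots,a_n$ and expanding yields the purely algebraic identity
\[
H\operatorname{tr}A^3+|A|^4=-\sum_{i<j}a_ia_j(a_i-a_j)^2,
\]
whose vanishing is equivalent to all the \emph{nonzero} $a_i$ being equal.

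Integrating the $\mathcal{L}|A|^2$ formula over $\Sigma$ and substituting $\lambda=H+\tfrac12\langle x,\mathbf{n}\rangle$ from \eqref{le-1} gives
\[
\int_\Sigma(H\operatorname{tr}A^3+|A|^4)e^{|x|^2/4}d\sigma=\int_\Sigma|\nabla A|^2 e^{|x|^2/4}d\sigma-\tfrac12\int_\Sigma|A|^2 e^{|x|^2/4}d\sigma-\tfrac12\int_\Sigma\langle x,\mathbf{n}\rangle\operatorname{tr}(A^3)e^{|x|^2/4}d\sigma.
\]
The hypothesis $H\operatorname{tr}A^3+|A|^4\le0$ makes the left-hand side nonpositive. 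To close the argument, I would control the cross term $\int\langle x,\mathbf{n}\rangle\operatorname{tr}(A^3)e^{|x|^2/4}d\sigma$ by deriving a companion Simons-type formula for $\mathcal{L}\operatorname{tr}(A^3)$ and pairing it with the identity $\mathcal{L}\langle x,\mathbf{n}\rangle=H-|A|^2\langle x,\mathbf{n}\rangle$ via self-adjointness of $\mathcal{L}$. Mean convexity $H\ge 0$ is used at this stage (by multiplying by $H$ before integrating) to keep signs favorable, with the target being an identity in which the right-hand side is a sum of manifestly nonnegative quantities plus a nonnegative multiple of $-(H\operatorname{tr}A^3+|A|^4)$.

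Once equality is forced, in particular $|\nabla A|\equiv0$, so $A$ is parallel and all principal curvatures are constant. Because every closed hypersurface in $\mathbb{R}^{n+1}$ has $H>0$ at a point where $|x|$ attains its maximum, constancy of $A$ combined with mean convexity yields $H>0$ on all of $\Sigma$. The algebraic identity above then forces all $a_i$ to coincide, so $\Sigma$ is totally umbilical; a closed totally umbilical hypersurface of $\mathbb{R}^{n+1}$ is a round sphere, and \eqref{le-1} pins its center at the origin, giving $\Sigma=\mathbb{S}^n_r(0)$ with $r=\lambda\pm\sqrt{\lambda^2-2n}$. The hardest part is the bookkeeping in the cross-term estimate: the Simons-type formula for $\mathcal{L}\operatorname{tr}(A^3)$ introduces $\operatorname{tr}(A^4)$ and $A$-weighted gradient quantities whose combinatorics must be reconciled with the factor $\sum_{i<j}a_ia_j(a_i-a_j)^2$ via Newton-type relations among the symmetric functions of the $a_i$.
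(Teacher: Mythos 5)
Your preparatory identities are all correct (the formulas for $\mathcal{L}H$, $\mathcal{L}|A|^2$, $\mathcal{L}\langle x,{\bf n}\rangle$, and the algebraic expansion $H\operatorname{tr}A^3+|A|^4=-\sum_{i<j}a_ia_j(a_i-a_j)^2$ all check out), but the analytic core of the argument is missing. Integrating $\mathcal{L}|A|^2$ over $\Sigma$ produces the cross term $\tfrac12\int_\Sigma\langle x,{\bf n}\rangle\operatorname{tr}(A^3)e^{|x|^2/4}d\sigma$, whose sign is genuinely uncontrolled: one has $\operatorname{tr}A^3\le0$ from the hypothesis, but $\langle x,{\bf n}\rangle=2(\lambda-H)$ can change sign, and no amount of rearranging the single identity you wrote will close the loop. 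Your proposed remedy --- ``derive a companion Simons-type formula for $\mathcal{L}\operatorname{tr}(A^3)$ and pair it with $\mathcal{L}\langle x,{\bf n}\rangle$ so that the right-hand side becomes a sum of manifestly nonnegative quantities'' --- is stated as an intention, not carried out, and there is no evidence such an identity exists. Everything after ``Once equality is forced'' presupposes the step you did not do. The paper's proof avoids this entirely by working with the regularized quotient $w=\sqrt{|A|^2+\varepsilon}/(H+\varepsilon)$: combining $\mathscr{L}H$ and $\mathscr{L}|A|^2$ yields a differential inequality for $(H+\varepsilon)^2w\mathscr{L}w$ in which the dangerous term appears exactly as $-\lambda(H\operatorname{tr}A^3+|A|^4)/(H+\varepsilon)\ge0$ (using $\lambda\ge\sqrt{2n}>0$ from Theorem \ref{qg}), plus an $O(\varepsilon)$ error; integrating against $e^{|x|^2/4}$ and letting $\varepsilon\to0$ forces $|A|/H$ to be locally constant and then $|\nabla A|=|\nabla|A||$, after which Huisken's argument and Lawson's theorem finish. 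The quotient is the idea your sketch lacks.

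There is also a flaw in your endgame. The vanishing of $\sum_{i<j}a_ia_j(a_i-a_j)^2$ is \emph{not} equivalent to the nonzero principal curvatures being equal unless they all have the same sign: for instance $a_1=2$, $a_2=1$, $a_3=-c$ gives $\sum_{i<j}a_ia_j(a_i-a_j)^2=2-c\bigl(2(2+c)^2+(1+c)^2\bigr)$, which vanishes for a suitable $c>0$ even though the $a_i$ are distinct, and mean convexity $\sum_ia_i\le0$ does not exclude mixed signs. So even granting $|\nabla A|\equiv0$ and $H>0$, you cannot conclude umbilicity the way you claim. The paper instead uses that a closed hypersurface has a point (the maximum of $|x|$) where $A$ has full rank $n\ge2$, and then invokes Lawson's classification of hypersurfaces with parallel second fundamental form together with compactness to rule out cylinders and planes and land on $\mathbb{S}^n_r(0)$; that route does not require umbilicity at all.
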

		For a convex hypersurface,  note that 
	\begin{align*}
	    H\textup{tr}A^3+|A|^4=-\sum_{i<j}^{n}\lambda_i\lambda_j(\lambda_i-\lambda_j)^2\leq0.
	\end{align*}
	 Therefore, \eqref{ineq} holds and  a consequence of Theorem \ref{dxs}  is as follows.
	\begin{cor}
	Let $\Sigma$, $n\geq2$, be a closed  convex $\lambda$-self-expander  immersed in $\mathbb{R}^{n+1}$. Then $\Sigma$ must be  a sphere $\mathbb{S}^n_r(0)$.
	\end{cor}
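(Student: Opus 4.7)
The plan is to read this as an immediate corollary of Theorem \ref{dxs}. Two conditions must be checked for a closed convex $\lambda$-self-expander $\Sigma$: that $\Sigma$ is mean convex, and that the pointwise inequality $H\operatorname{tr}A^3+|A|^4\leq 0$ holds. Both are straightforward consequences of the sign conventions adopted in the introduction.

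For the first, convexity in this paper means that each principal curvature $\lambda_i$, defined via $A(e_i)=-\bar{\nabla}_{e_i}\mathbf{n}$, is non-positive. Since $H=-\sum_{i=1}^n\lambda_i$, this immediately gives $H\geq 0$, so $\Sigma$ is mean convex as required by Theorem \ref{dxs}. For the second, I would verify the algebraic identity
\[
H\operatorname{tr}A^3+|A|^4=-\sum_{i<j}\lambda_i\lambda_j(\lambda_i-\lambda_j)^2,
\]
displayed just before the statement of the corollary, by expanding $|A|^4=(\sum_i\lambda_i^2)^2=\sum_{i,j}\lambda_i^2\lambda_j^2$ and $H\operatorname{tr}A^3=-\sum_{i,j}\lambda_i\lambda_j^3$, and then symmetrizing the pair $(i,j)\leftrightarrow(j,i)$ to collect the resulting terms as $-\tfrac12\sum_{i,j}\lambda_i\lambda_j(\lambda_i-\lambda_j)^2$. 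Under convexity every product $\lambda_i\lambda_j$ is non-negative, so the right-hand side is $\leq 0$ and the hypothesis \eqref{ineq} is satisfied.

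With both hypotheses of Theorem \ref{dxs} in place, that theorem identifies $\Sigma$ as a sphere $\mathbb{S}^n_r(0)$, which completes the proof. There is essentially no obstacle here: once one has noticed the symmetric rearrangement above, the corollary reduces to a one-line appeal to the previous theorem, and the only point requiring a moment's care is the paper's sign convention for $A$ and $H$, under which convexity forces $\lambda_i\leq 0$ rather than $\lambda_i\geq 0$.
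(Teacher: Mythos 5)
Your proposal is correct and follows exactly the paper's own route: it deduces the corollary from Theorem \ref{dxs} by observing that convexity ($\lambda_i\leq 0$ with $H=-\sum_i\lambda_i$) gives mean convexity, and that the symmetrization identity $H\operatorname{tr}A^3+|A|^4=-\sum_{i<j}\lambda_i\lambda_j(\lambda_i-\lambda_j)^2\leq 0$ verifies hypothesis \eqref{ineq}. Your expansion of the identity and your attention to the paper's sign convention for $A$ are both accurate; nothing further is needed.
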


	Next, we study  complete  $\lambda$-self-expanders with some integrability condition on the norm of  the second fundamental form.  Now we recall some related results. In \cite[Theorem 1.4]{cheng2018},  the second author of the  present paper  and Zhou proved the uniqueness of hyperplanes through the origin for mean convex self-expander hypersurfaces  under some condition on the square of the norm of the second fundamental form.

	In \cite[Theorem 2]{cheng2018complete},   Q. Cheng and Wei proved  that a complete embedded $\lambda$-hypersurface in $\mathbb{R}^{n+1}$ with  polynomial area growth  (there exist a constant $C>0$ and $r_0>0$ such that $Area(B_r(0)\cap\Sigma)\leq Cr^\alpha$, for all $r\geq r_0$ and for some $\alpha>0$ )  satisfying  $H-\lambda\geq0$ and  $\lambda 	((H-\lambda)\textup{tr}A^3+\frac{|A|^2}{2})\leq 0$   has to be a cylinder $\mathbb{S}^l_r(0)\times\mathbb{R}^{n-l},$ $0\leq l\leq n$.  Later, the same classification theorem as  Q. Cheng and Wei was proved in  \cite[Theorem 3]{ancari2019volume} by the first author of the present paper  and  Miranda, replacing the polynomial area growth by   a weaker intrinsic property on the norm of the second fundamental form. Motivated by   the above  work,  we obtain the following result on $\lambda$-self-expanders.
	
	\begin{theorem}\label{huiss}
		Let  $\Sigma$, $n\geq2$, be  a complete   $\lambda$-self-expander immersed in $\mathbb{R}^{n+1}$. If $H-\lambda \leq 0$, $|A|\in L^2(\Sigma,e^{\frac{|x|^2}{4}}d\sigma)$ and 
		\begin{align}\label{g8_1}
			\lambda(	(H-\lambda)\textup{tr}A^3-\frac{|A|^2}{2})\geq 0,
		\end{align}
		then $\lambda\geq0$ and  $\Sigma$ must be either a hyperplane or $\mathbb{S}^n_r(0)$.
		
	\end{theorem}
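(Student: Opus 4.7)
The plan is to run a weighted integration-by-parts argument with respect to the drift operator $\mathcal{L} = \Delta + \tfrac{1}{2}\langle x^\top, \nabla\cdot\rangle$, which is self-adjoint on $L^2(\Sigma, e^{\frac{|x|^2}{4}}d\sigma)$. First I would differentiate the defining equation $H = -\tfrac{1}{2}\langle x,{\bf n}\rangle + \lambda$ tangentially to obtain $\nabla H = \tfrac{1}{2}A(x^\top)$, and from it derive the evolution identity
\[
\mathcal{L}(H-\lambda) = -(H-\lambda)\left(\tfrac{1}{2}+|A|^2\right) - \tfrac{\lambda}{2}.
\]
A parallel computation using Simons' identity then yields the Simons-type formula
\[
\mathcal{L}|A|^2 = 2|\nabla A|^2 - |A|^2 - 2|A|^4 - 2\lambda\,\textup{tr}A^3.
\]
These two identities specialize correctly to hyperplanes and to centered spheres $\mathbb{S}^{n}_r(0)$ (where $|A|^2 = n/r^2$ is constant), which is a useful consistency check.

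Setting $\phi := \lambda - H \ge 0$ (by the sign hypothesis), the crucial step is to test $\mathcal{L}|A|^2$ against $\phi^2$: by self-adjointness $\int\phi^2\mathcal{L}|A|^2\, e^{\frac{|x|^2}{4}}d\sigma = \int|A|^2\mathcal{L}\phi^2\, e^{\frac{|x|^2}{4}}d\sigma$, and upon expanding $\mathcal{L}\phi^2 = 2\phi\mathcal{L}\phi + 2|\nabla\phi|^2$ the symmetric terms $\phi^2|A|^2$ and $\phi^2|A|^4$ cancel on both sides. What remains is the integral identity
\[
\int_\Sigma\bigl[\phi^2|\nabla A|^2 - |A|^2|\nabla H|^2\bigr]\,e^{\frac{|x|^2}{4}}d\sigma \;=\; -\lambda\int_\Sigma \phi\left[(H-\lambda)\textup{tr}A^3 - \tfrac{|A|^2}{2}\right]e^{\frac{|x|^2}{4}}d\sigma.
\]
By the hypothesis \eqref{g8_1} the right-hand side is $\le 0$, hence $\int\phi^2|\nabla A|^2\,e^{\frac{|x|^2}{4}}d\sigma \le \int|A|^2|\nabla H|^2\,e^{\frac{|x|^2}{4}}d\sigma$.

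The integration by parts above must be justified on the (possibly non-compact) $\Sigma$. I would insert cutoff functions $\eta_R$ supported in extrinsic balls of radius $R$, control the error terms via Cauchy--Schwarz, and pass to the limit $R\to\infty$; the super-exponential weight $e^{\frac{|x|^2}{4}}$ together with the quadratic integrability hypothesis $|A|\in L^2(\Sigma,e^{\frac{|x|^2}{4}}d\sigma)$ is precisely what forces the boundary contributions to vanish.

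The final step, which I expect to be the main obstacle, is closing the loop from the one-sided integral inequality to rigidity. My plan is to combine the inequality with the pointwise nonnegativity of $|\phi\nabla A - A\otimes\nabla H|^2$ and further self-adjoint integrations by parts to force equality, which in turn forces $\nabla A \equiv 0$. Once $A$ is parallel, the classical reducibility theorem writes $\Sigma$ isometrically as a cylindrical product $\mathbb{S}^{k}_r(0)\times\mathbb{R}^{n-k}$; the $L^2$ hypothesis on $|A|$ then eliminates every intermediate $1\le k\le n-1$, because $\int_{\mathbb{R}^{n-k}}e^{|z|^2/4}dz = +\infty$. This leaves only hyperplanes ($k=0$, $A\equiv 0$, so $\lambda = \tfrac{1}{2}\langle x,{\bf n}\rangle\ge 0$) and centered spheres $\mathbb{S}^{n}_r(0)$ (for which $\lambda = \tfrac{n}{r}+\tfrac{r}{2}>0$). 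Thus $\lambda\ge 0$ in every case, as claimed.
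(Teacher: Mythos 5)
Your preparatory identities are all correct (they are exactly \eqref{e2} and \eqref{e3} of the paper), and the broad strategy --- weighted integration by parts against the drift operator, with the $L^2$ hypothesis killing the cut-off errors and excluding the intermediate cylinders at the end --- is the right one. But the core of your argument has two genuine gaps. First, the symmetric test $\int\phi^2\mathscr{L}|A|^2\,e^{|x|^2/4}=\int|A|^2\mathscr{L}\phi^2\,e^{|x|^2/4}$ cannot be justified under the stated hypotheses: once you insert cut-offs $\eta_R$ and try to let $R\to\infty$, both the terms you wish to cancel ($\phi^2|A|^2$, $\phi^2|A|^4$) and the annulus error terms involve $\phi^2|A|^2$, $|A|^2|\nabla H|^2$ and $\phi|A|^2|\nabla H|$, none of which is controlled by $|A|\in L^2(\Sigma,e^{|x|^2/4}d\sigma)$ alone, since neither $\phi=\lambda-H$ nor $\nabla H$ is assumed bounded. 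Second, and more seriously, even granting your integral identity, the one-sided inequality $\int\bigl[\phi^2|\nabla A|^2-|A|^2|\nabla H|^2\bigr]e^{|x|^2/4}\le 0$ does not force $\nabla A\equiv0$: the integrand is not a complete square, and your proposed auxiliary quantity expands as
\begin{equation*}
|\phi\nabla A-A\otimes\nabla H|^2=\phi^2|\nabla A|^2-\phi\langle\nabla|A|^2,\nabla H\rangle+|A|^2|\nabla H|^2,
\end{equation*}
whose cross term has the \emph{opposite} sign to the one in $\bigl|\phi\nabla|A|-|A|\nabla\phi\bigr|^2=\phi^2|\nabla|A||^2+\phi\langle\nabla|A|^2,\nabla H\rangle+|A|^2|\nabla H|^2$ (recall $\nabla\phi=-\nabla H$), which is the quantity you would actually need to vanish. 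Combining your inequality with $|\phi\nabla A-A\otimes\nabla H|^2\ge0$ therefore yields nothing; the step you flag as ``the main obstacle'' is in fact not closed.

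The paper's proof avoids both problems by forming the quotient \emph{before} integrating: after disposing of $\lambda\le0$ and of points with $H=\lambda$ (via the maximum principle applied to $\mathscr{L}(H-\lambda)\ge0$, resp.\ a pointwise contradiction with \eqref{g8_1}), it sets $w=\sqrt{|A|^2+\varepsilon}/(H-\lambda)$ and derives from \eqref{e2}, \eqref{e3} and the sign hypothesis a differential inequality for $(H-\lambda)^2w\mathscr{L}w$ whose integrated form reads
\begin{equation*}
\int_\Sigma\varphi^2(H-\lambda)^2|\nabla w|^2e^{\frac{|x|^2}{4}}\le\varepsilon\!\int_\Sigma\frac{H\varphi^2}{\lambda-H}e^{\frac{|x|^2}{4}}+4\!\int_\Sigma|\nabla\varphi|^2(|A|^2+\varepsilon)e^{\frac{|x|^2}{4}},
\end{equation*}
so the only term that must vanish in the cut-off limit is $k^{-2}\int_{\mathrm{annulus}}|A|^2e^{|x|^2/4}$ --- precisely what the $L^2$ hypothesis controls. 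This gives $|A|=C(H-\lambda)$, then comparison of \eqref{e2} with \eqref{e4} gives $|\nabla A|=|\nabla|A||$, and Huisken's argument together with Lawson's theorem and the $L^2$ condition yields the hyperplane or the sphere (note also that the rank-one case produces a curve times $\mathbb{R}^{n-1}$, not covered by Lawson alone, and is excluded by integrability). If you want to rescue your scheme, you must reorganize the computation so that the gradient term appearing after integration is $(H-\lambda)^2|\nabla w|^2$ and the surviving error is $|\nabla\varphi|^2|A|^2$, rather than testing \eqref{e3} against $\phi^2$ and hoping to recombine afterwards.
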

	
\begin{remark}
     The equality in  \eqref{g8_1} holds if $\Sigma$ is either a hyerplane or  $\mathbb{S}^{l}_r(0)\times\mathbb{R}^{n-l}$, $1\leq l \leq n$, but  the cylinders $\mathbb{S}^{l}_r(0)\times\mathbb{R}^{n-l}$, $1\leq l \leq n-1$ do not satisfy  the property  that $|A|\in L^2(\Sigma,e^{\frac{|x|^2}{4}}d\sigma)$. This is the reason why the theorem \ref{huiss} does not characterize the cylinders $\mathbb{S}^{l}_r(0)\times\mathbb{R}^{n-l}$, $1\leq l \leq n-1$.
 \end{remark}
	Recently, Smoczyk \cite[Theorem 1]{smoczyk2020self} proved that complete  self-expanders  $\Sigma\subset\mathbb{R}^{n+1}$   with $H\neq0$ are products of self-expander curves and flat subspaces, if and only if the function $\frac{|A|^2}{H^2}$ attains a local maximum. Motivated by this result, we obtain the following result, that characterizes generalized cylinders as  $\lambda$-self-expanders.
\begin{theorem}\label{smoczyk}
 Let $\Sigma$ be a complete immersed  $\lambda$-self-expander  in $\mathbb{R}^{n+1}$ with $\lambda\neq0$. Then
\begin{itemize}
\item   [\textup{(i)}] the open set  $\{x\in\Sigma; H\neq \lambda\}$  must be non-empty. 

 \item [\textup{(ii)}] Further,  if the following conditions  are satisfied  on the non-empty set $\{x\in\Sigma; H\neq \lambda\}$,
 \begin{itemize}
     \item [\textup{(a)}] $\frac{\lambda}{H-\lambda}\left((H-\lambda)\textup{tr}A^3-\frac{|A|^2}{2}\right)\leq0,$
     \item  [\textup{(b)}] the function $\frac{|A|^2}{(H-\lambda)^2}$ attains a local maximum,
 \end{itemize}
then $\Sigma$ is either a hyperplane or a cylinder $\mathbb{S}_r^{l}(0)\times\mathbb{R}^{n-l}$, $1\leq l\leq n$.
\end{itemize}
\end{theorem}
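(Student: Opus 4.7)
Suppose for contradiction that $H\equiv\lambda$ on $\Sigma$. Then \eqref{le-1} forces $\langle x,\mathbf{n}\rangle\equiv 0$, so the position vector is tangent to $\Sigma$ everywhere. Its integral curves in $\mathbb{R}^{n+1}\setminus\{0\}$ are the open rays through the origin; completeness of $\Sigma$ then implies that $\Sigma$ is a union of such rays, i.e.\ a cone with vertex at the origin. Scale invariance of the cone yields $H(tx)=t^{-1}H(x)$ for $t>0$, which combined with $H\equiv\lambda$ gives $\lambda=t^{-1}\lambda$ for all $t>0$ and hence $\lambda=0$, contradicting the hypothesis.

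\textbf{Set-up for (ii).} Set $\phi:=H-\lambda=-\tfrac{1}{2}\langle x,\mathbf{n}\rangle$ and let $\mathcal{L}:=\Delta+\tfrac{1}{2}\langle x^{\top},\nabla\,\cdot\,\rangle$ be the drift Laplacian associated with the expander weight $e^{|x|^{2}/4}$. A direct computation using \eqref{le-1}, Codazzi and Simons' identity yields the two drift identities
\begin{align*}
\mathcal{L}\phi&=-\Bigl(\tfrac{1}{2}+|A|^{2}\Bigr)\phi-\tfrac{\lambda}{2},\\
\tfrac{1}{2}\mathcal{L}|A|^{2}&=|\nabla A|^{2}-|A|^{4}-\lambda\,\textup{tr}A^{3}-\tfrac{1}{2}|A|^{2},
\end{align*}
which can be checked to vanish identically on the model examples $\mathbb{S}_{r}^{n}(0)$, $\mathbb{S}_{r}^{l}(0)\times\mathbb{R}^{n-l}$ and hyperplanes. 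On the set $\Omega:=\{\phi\neq 0\}$, non-empty by (i), I would study $f:=|A|^{2}/\phi^{2}$; if $f\equiv 0$ then $A\equiv 0$ and $\Sigma$ is a hyperplane, so I may assume $|A|(p),\phi(p)\neq 0$ at the local maximum $p$ prescribed by hypothesis (b).

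\textbf{Key inequality at $p$.} From $\nabla f(p)=0$ one gets $\phi\nabla|A|^{2}=2|A|^{2}\nabla\phi$ at $p$, and hence $|\nabla|A||^{2}(p)=|A|^{2}|\nabla\phi|^{2}/\phi^{2}(p)$. Inserting the two drift identities into the quotient rule for $\mathcal{L}$ (the cross terms drop at the critical point) and simplifying gives
\begin{align*}
\phi^{2}\mathcal{L}f(p)=2\bigl[|\nabla A|^{2}-|\nabla|A||^{2}\bigr](p)-\frac{2\lambda}{\phi(p)}\Bigl[\phi\,\textup{tr}A^{3}-\tfrac{|A|^{2}}{2}\Bigr](p).
\end{align*}
Kato's inequality $|\nabla A|^{2}\ge|\nabla|A||^{2}$ makes the first bracket non-negative, while hypothesis (a) makes the second term non-negative; since local-maximality gives $\mathcal{L}f(p)\le 0$, both inequalities must be equalities at $p$. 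Saturated Kato combined with Codazzi's symmetry $\nabla_{k}A_{ij}=\nabla_{i}A_{kj}$ forces either $\nabla A(p)=0$ or $A(p)$ to have rank one.

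\textbf{Splitting and classification.} I would then adapt the splitting argument of Smoczyk \cite{smoczyk2020self}: using the strong maximum principle applied to the drift-elliptic inequality satisfied by $f$ (equivalently, exploiting the real analyticity of $\lambda$-self-expanders), the pointwise rigidity at $p$ propagates to a neighborhood of $p$, and then to all of $\Sigma$ by connectedness. This produces an isometric splitting $\Sigma=\Gamma^{l}\times\mathbb{R}^{n-l}$ in which $\Gamma\subset\mathbb{R}^{l+1}$ is a $\lambda$-self-expander on which $|A|^{2}/(H-\lambda)^{2}$ is constant; reducing the $\lambda$-self-expander equation on such $\Gamma$ (an ODE when $l=1$, an umbilic-type equation when $l\ge 2$) yields $\Gamma=\mathbb{R}^{l}$ or $\Gamma=\mathbb{S}_{r}^{l}(0)$, so $\Sigma$ is a hyperplane or one of the announced cylinders. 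The \emph{main obstacle} is precisely this propagation step: the Kato and (a)-equalities are only known at $p$ a priori, and turning them into a bona fide strong-maximum-principle statement requires rewriting the PDE for $f$ in the form $\mathcal{L}f+\langle X,\nabla f\rangle\ge 0$ with the $\lambda$-contributions (absent from Smoczyk's self-expander setting) carefully absorbed—this is the technical heart where the present theorem genuinely departs from the $\lambda=0$ case.
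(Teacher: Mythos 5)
Your part (i) is correct: from $H\equiv\lambda$ you get $\langle x,{\bf n}\rangle\equiv 0$, hence a cone, hence $\lambda=0$. The paper gets the same conclusion in one line from the Simons-type identity \eqref{e2}: if $H\equiv\lambda$ then $\mathscr{L}H=0$, so $0=-\lambda/2$. Your route is a valid, if longer, alternative. Your two drift identities and the computation of $\phi^2\mathcal{L}f(p)$ at the critical point are also correct (I checked the cross terms).

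The genuine gap in part (ii) is exactly the step you flag as the ``main obstacle'' and then leave open. Establishing $|\nabla A|=|\nabla|A||$ and equality in (a) only \emph{at the single point} $p$ is not enough: the Huisken/Colding--Minicozzi dichotomy (rank one versus $\nabla A=0$) and any splitting or Lawson-type rigidity argument require the Kato equality on an open set, and pointwise information at one point propagates to nothing. The resolution is not a delicate absorption trick but the central computation of the paper's proof: one computes $\mathscr{L}\bigl(|A|^2/(H-\lambda)^2\bigr)$ \emph{everywhere} on $\{H\neq\lambda\}$, not just at the critical point, and the first-order terms organize themselves exactly into a drift term, yielding
\[
\mathscr{L}f+\left\langle\nabla f,\nabla\log(H-\lambda)^2\right\rangle
=2\left|\nabla\frac{A}{H-\lambda}\right|^2-\frac{2\lambda}{(H-\lambda)^3}\left((H-\lambda)\textup{tr}A^3-\frac{|A|^2}{2}\right)\geq 0,
\]
where $f=|A|^2/(H-\lambda)^2$ and the sign of the last term is hypothesis (a). The drift field $\nabla\log(H-\lambda)^2$ is smooth on $\{H\neq\lambda\}$, so Hopf's strong maximum principle applies verbatim and $f$ is constant on a neighborhood of $p$. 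From $f\equiv c$ one then gets $|\nabla A|=|\nabla|A||$ on that whole neighborhood by combining \eqref{e2} with \eqref{e4}, Lawson's local rigidity theorem identifies the neighborhood with a piece of a hyperplane or of $\mathbb{S}^l_r(0)\times\mathbb{R}^{n-l}$, and unique continuation globalizes. Since your write-up stops short of carrying out precisely this propagation, it does not constitute a proof as it stands.
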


	We also prove that
	\begin{theorem}\label{le-tu}
		Let $\Sigma$ be a complete properly immersed $\lambda$-self-expander hypersurface  in $\mathbb{R}^{n+1}$. If there exists $\alpha>0$ such that
		\begin{align}\label{le-se1}
			|A|^2(H-\lambda)H+\frac{1}{2}H^2\left( 1+\frac{(\alpha+1)^2}{8}|x^{\top}|^2 \right)\leq 0,
		\end{align}
		 then   $\Sigma$ must be either a hyperplane or  $\mathbb{S}^n_r(0)$.
		
	\end{theorem}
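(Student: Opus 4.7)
The plan is to rewrite the pointwise hypothesis \eqref{le-se1} as a differential inequality for $H$ under the drift Laplacian $\mathcal{L}:=\Delta+\tfrac{1}{2}\langle x^{\top},\nabla\cdot\rangle$, which is formally self-adjoint with respect to the weight $e^{|x|^{2}/4}\,d\sigma$, and then to extract $H|x^{\top}|\equiv 0$ by a weighted cutoff argument whose weight is tuned exactly to the coefficient $(\alpha+1)^{2}/8$.

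\emph{Step 1 (reformulation).} A direct computation using $H-\lambda=-\tfrac{1}{2}\langle x,\mathbf{n}\rangle$, the identities $\Delta x=-H\mathbf{n}$ and $\Delta\mathbf{n}=-|A|^{2}\mathbf{n}+\nabla H$, and the Codazzi equation yields
\[
\mathcal{L}H\ =\ -|A|^{2}(H-\lambda)\,-\,\tfrac{H}{2}\qquad\text{on }\Sigma.
\]
Multiplying by $H$, the hypothesis \eqref{le-se1} is equivalent to
\[
H\,\mathcal{L}H\ \geq\ \tfrac{(\alpha+1)^{2}}{16}\,H^{2}\,|x^{\top}|^{2}.
\]

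\emph{Step 2 (weighted cutoff).} Since $\alpha>0$, fix $\theta\in(\tfrac{1}{4},\tfrac{\alpha+1}{4})$ and set $\Phi:=e^{-\theta|x|^{2}/2}$. Let $\chi_{R}$ be a radial cutoff which equals $1$ on $\Sigma\cap B_{R}$, is supported in $\Sigma\cap B_{2R}$, and satisfies $|\nabla\chi_{R}|\le 2/R$. Multiplying the inequality of Step 1 by $(\chi_{R}\Phi)^{2}e^{|x|^{2}/4}$, integrating, and using self-adjointness of $\mathcal{L}$ together with a Cauchy--Schwarz absorption of the $|\nabla H|^{2}$ term produces
\[
\tfrac{(\alpha+1)^{2}}{16}\int_{\Sigma} H^{2}|x^{\top}|^{2}(\chi_{R}\Phi)^{2}e^{|x|^{2}/4}\,d\sigma\ \le\ \int_{\Sigma} H^{2}|\nabla(\chi_{R}\Phi)|^{2}e^{|x|^{2}/4}\,d\sigma.
\]
Using $|\nabla(\chi_{R}\Phi)|^{2}\le(1+\varepsilon)\Phi^{2}|\nabla\chi_{R}|^{2}+(1+\varepsilon^{-1})\theta^{2}\chi_{R}^{2}\Phi^{2}|x^{\top}|^{2}$ and choosing $\varepsilon$ large enough so that $(1+\varepsilon^{-1})\theta^{2}<(\alpha+1)^{2}/16$, the second piece is absorbed into the left-hand side, leaving
\[
c\int_{\Sigma} H^{2}|x^{\top}|^{2}(\chi_{R}\Phi)^{2}e^{|x|^{2}/4}\,d\sigma\ \le\ \frac{C_{\varepsilon}}{R^{2}}\int_{\Sigma\cap(B_{2R}\setminus B_{R})} H^{2}\,e^{(1/4-\theta)|x|^{2}}\,d\sigma
\]
for some $c>0$. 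The self-expander equation gives the bound $|H|\le|x|/2+|\lambda|$, while the Gaussian factor $e^{(1/4-\theta)|x|^{2}}$ decays since $\theta>1/4$; together with the at-most-polynomial Euclidean area growth of a properly immersed $\lambda$-self-expander (via a monotonicity argument adapted from the self-expander setting), the right-hand side tends to $0$ as $R\to\infty$. Hence $H|x^{\top}|\equiv 0$ on $\Sigma$.

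\emph{Step 3 (rigidity).} Since the $\lambda$-self-expander equation is elliptic and analytic, $\Sigma$ is real analytic. If $\{H\neq 0\}$ is non-empty, then $|x^{\top}|$ vanishes on an open set, and unique continuation propagates $|x^{\top}|\equiv 0$ to all of $\Sigma$; therefore $|x|$ is constant and proper immersion with connectedness force $\Sigma=\mathbb{S}^{n}_{r}(0)$ with $r$ determined by $\lambda=n/r+r/2$. Otherwise $H\equiv 0$, so $\langle x,\mathbf{n}\rangle=2\lambda$ is constant; substituting into the identity $\Delta\langle x,\mathbf{n}\rangle=\langle\nabla H,x^{\top}\rangle+H-|A|^{2}\langle x,\mathbf{n}\rangle$ derived in Step 1 gives $\lambda|A|^{2}=0$. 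For $\lambda\neq 0$ this forces $A\equiv 0$ and $\Sigma$ is a hyperplane; for $\lambda=0$, $\langle x,\mathbf{n}\rangle=0$ makes $\Sigma$ a smooth complete minimal cone with vertex at the origin, which is again a hyperplane.

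The main obstacle is Step 2: the absorption closes only under the sharp balance $\tfrac{1}{4}<\theta<\tfrac{\alpha+1}{4}$, which is precisely what dictates the constant $(\alpha+1)^{2}/8$ in \eqref{le-se1}, and the passage $R\to\infty$ requires a growth bound for $\int_{\Sigma\cap B_{R}} H^{2}\,d\sigma$ that has to be extracted from the proper immersion alone, which is the most delicate ingredient of the argument.
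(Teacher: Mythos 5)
Your Steps 1 and 3 are sound, and your Step 2 is in essence a conjugated version of the paper's argument: the paper works with the operator $\mathcal{L}_\alpha=\Delta-\frac{\alpha}{2}\langle x,\nabla\cdot\rangle$, self-adjoint for the weight $e^{-\frac{\alpha}{4}|x|^2}$, and the identity \eqref{au1} for $\mathcal{L}_\alpha H^2$, in which the completed square produces exactly the term $\frac{(\alpha+1)^2}{8}H^2|x^\top|^2$ appearing in \eqref{le-se1}; you instead keep the expander operator $\mathscr{L}$ with weight $e^{|x|^2/4}$ and push the Gaussian factor $e^{-\theta|x|^2}$ into the test function, recovering the same sharp balance $\tfrac14<\theta<\tfrac{\alpha+1}{4}$. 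These are the same computation in different clothes.

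The genuine gap is the passage $R\to\infty$. You justify the vanishing of the error term by invoking ``at-most-polynomial Euclidean area growth of a properly immersed $\lambda$-self-expander.'' No such statement is available: the paper's own volume estimate (Theorem \ref{le-ineq5}, resting on Theorem 3.1 of \cite{ancari2020volum}) only yields $Area(B_r(0)\cap\Sigma)\leq C(\alpha)e^{\frac{\alpha}{4}r^2}$, and even that requires $|H|\leq a|x|+b$ with $a<\tfrac12$. The only bound you actually extract from the expander equation is $|H|\leq \tfrac12|x|+|\lambda|$, i.e. $a=\tfrac12$, for which neither that theorem nor the standard monotonicity formula gives better than $e^{r^2/4}$-type area growth; this overwhelms your decay factor $e^{(1/4-\theta)|x|^2}$ whenever $\theta<\tfrac12$, i.e. for all $\alpha<1$, so your limit does not close precisely in the delicate regime of small $\alpha$. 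The missing observation, which the paper uses, is that \eqref{le-se1} forces $(H-\lambda)H\leq0$ pointwise (where $|A|=0$ it even forces $H=0$), hence $|H|\leq|\lambda|$ is bounded; one may then apply Theorem \ref{le-ineq5} with $a=0$ (equivalently Corollary \ref{le-spn10}) to get $\int_\Sigma H^4e^{-\frac{\alpha}{4}|x|^2}d\sigma<\infty$ for every $\alpha>0$, after which the cutoff argument terminates. With that one sentence added, your scheme goes through; without it, Step 2 is not justified. A minor further remark: the paper's integration by parts also retains the term $\tfrac14\int\varphi^2|\nabla H^2|^2$, which gives $H$ constant directly and makes the final case analysis immediate, whereas you must appeal to analyticity and unique continuation to globalize $|x^\top|\equiv0$ from the open set $\{H\neq0\}$; that part of your argument is acceptable but heavier than necessary.
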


 Using the  Simons' type equations, we prove Proposition \ref{prop-1}, which states that  if  a complete   immersed  $\lambda$-self-expander  in $\mathbb{R}^{n+1}$ has  constant mean curvature, then it  must be either a hyperplane or a cylinder   $\mathbb{S}^l_r(0)\times \mathbb{R}^{n-l}$, $1\leq l\leq n$.\\

In the last Section  \ref{section7}, we mainly extend some of  our previous results on self-expanders in  \cite{ancari2020volum} to the $\lambda$-self-expanders.  Theorem \ref{le-ineq5}  deals with finitiness of weighted areas and area growth estimate of $\lambda$-self-expanders.  Theorems \ref{le-e18} and \ref{th-74}    characterize the generalized cylinders as $\lambda$-self-expanders. \\

  It is interesting to see that unlike self-expanders, hyperplanes not through the origin  or a cylinder   $\mathbb{S}^l_r(0)\times \mathbb{R}^{n-l}$, $1\leq l\leq n$ appear in the corresponding conclusions.\\

 The rest of the paper is organized as follows: In Section \ref{le-preli}, we   give some notations and examples of $\lambda$-self-expanders. In Section \ref{simons}, we give  some  Simons' type equations for $\lambda$-self-expanders.  In Section \ref{le-section3}, we prove Theorems \ref{qg} and \ref{dxs}.  In Section \ref{le-section4}, we prove Theorems \ref{huiss} and \ref{le-tu}.  In Section \ref{le-section5}, we prove Theorem \ref{smoczyk} and Propostion \ref{prop-1}. In Section \ref{section7}, we prove  Theorems  \ref{le-ineq5}, \ref{le-e18} and \ref{th-74}.

\section{Preliminaries}\label{le-preli}
In this section, we will recall some concepts and basic facts.\\

Let $\Sigma$  denote a hypersurface immersed  in the Euclidean space $(\mathbb{R}^{n+1}, g_0)$ with the induced metric $g$. We will denote by  $d\sigma$ the area form  of $\Sigma$. In this paper, unless otherwise specified, the notations with a bar,  for instance $\bar{\nabla}$ and  $\bar{\nabla}^2$,  denote the quantities corresponding the Euclidean metric $g_0$ on $\mathbb{R}^{n+1}$.  On the other hand, the notations  like  $\nabla, \Delta$ denote the quantities corresponding the intrinsic metric ${g}$ on $\Sigma$.\\

The  isometric immersion $i: (\Sigma, g) \to (\mathbb{R}^{n+1},g_0)$ is said to be properly immersed if,  for any compact subset $\Omega$ in $\mathbb{R}^{n+1}$, the pre-image $i^{-1}(\Omega)$ is compact in $\Sigma$.\\




Let $A$ denote the second fundamental form of $(\Sigma,g)$.  At $p\in \Sigma$, 
$A(X)= -\bar{\nabla}_X{\bf n},$
where $X\in T_p\Sigma$ and ${\bf n}$  is the  outward unit normal field on $\Sigma$. The mean curvature  $H$ of $\Sigma$  is defined as 
the trace of  $-A$.  \\

 Given a smooth function $f$ on $\mathbb{R}^{n+1}$,  it induces a smooth measure $e^{-f}d\sigma$ on $(\Sigma,g)$. Define  the weighted mean curvature  $H_f$ of $\Sigma$ of weight $e^{-f}$   by $$H_f:=H-\langle \bar{\nabla} f,{\bf n}\rangle.$$


$\Sigma$  is called constant weighted mean curvature (or simply by CWMC) hypersurface  (of weight $e^{-f}$)  if    $ H_f=\lambda$ for some $\lambda\in \mathbb{R}$, or equivalently if it satisfies
\begin{equation} \label{le-cwmc} H=\langle \bar{\nabla} f,{\bf n}\rangle+\lambda.
\end{equation}

If   $\lambda=0$, $\Sigma$  is called $f$-minimal.  There are  very interesting examples of $f$-minimal hypersurfaces:

\begin{example}\label{example1} If $f=\frac{|x|^2}4$,  $-\frac{|x|^2}4$, and  $-\left<x,w\right>$ respectively,  where $w\in \mathbb{R}^{n+1}$ is a constant vector, a $f$-minimal hypersurface $\Sigma$ is a  self-shrinker, self-expander and translator for MCF in the Euclidean space $\mathbb{R}^{n+1}$ respectively. 
\end{example}

 Taking $f=-\frac{|x|^2}{4}$ in \eqref{le-cwmc},  a CWMC hypersurface $\Sigma$ of weight $e^{\frac{|x|^2}{4}}$ in $\mathbb{R}^{n+1}$ is just a  $\lambda$-self-expander  defined by \eqref{le-1}, 
that is, $\Sigma$ satisfies the equation
\begin{align}\label{expander-1}
	H=-\frac{\langle x,{\bf n}\rangle}{2}+\lambda. 
\end{align}

Here are some examples of  $\lambda$-self-expanders.

\begin{example}
	Any hyperplane in $\mathbb{R}^{n+1}$ is a $\lambda$-self-expander  with $ \lambda = \pm\frac{d}{2}$, where $d$ denotes the distance from the origin  to the hyperplane  and the sign depends on the orientation. Indeed, let	$\Sigma \subset \mathbb{R}^{n+1}$ be a hyperplane. Let ${\bf n}$ be the  unit normal  of $\Sigma$. Then  $\Sigma = \{x \in\mathbb{R}^{n+1};\langle x, {\bf n}\rangle=d\}$ or $\Sigma = \{x \in\mathbb{R}^{n+1};\langle x, {\bf n}\rangle=-d\}$. In both cases,
	we have 
	\begin{align*}
		H=-\frac{\langle x, {\bf n}\rangle}{2}+\lambda.
	\end{align*}
	This implies that $\Sigma$ is a $\lambda$-self-expander with $\lambda = 
	 \frac{d}{2}$ or $ -\frac{d}{2}$.
	 
\end{example}
\begin{example}\label{Ex2}
	All spheres $\mathbb{S}^n_r(0)$ centered at the origin of radius $r>0$ are $\lambda$-self-expanders with $\lambda=\frac nr+\frac r2\geq \sqrt{2n}$. For each $\lambda>\sqrt{2n}$, there are two spheres $S^n_r(0)$ of radius  $r=\lambda\pm \sqrt{\lambda^2-2n} $ as $\lambda$-self-expanders with this $\lambda$. 
	
	Moreover, a sphere $\mathbb{S}^n_r(p)$ is $\lambda$-self-expander if and only if its center $p$ is the origin. In fact, let ${\bf {n}}$ be the outward unit normal  of the sphere $\mathbb{S}^n_r(p)$. We have $H=\frac nr$, $ \langle x-p, {\bf{n}}\rangle=r$, and
	hence $H+\frac{\langle x, {\bf n}\rangle}{2}=\frac nr+\frac r2-\frac{\langle p, \bf{n}\rangle}{2}=\lambda.$   Therefore $\langle p, \bf{n}\rangle$ must be  constant and thus $p$ must be the origin.

\end{example}
\begin{example}
	The cylinders $\mathbb{S}^{k}_r(0)\times\mathbb{R}^{n-k}$, where $1\leq k\leq n-1$ and  $r>0$, are also  $\lambda$-self-expanders with $\lambda=\frac kr+\frac r2\geq \sqrt{2k}$. Each $\lambda>\sqrt{2k}$ corresponds  two cylinders $\mathbb{S}^{k}_r(0)\times\mathbb{R}^{n-k}$ of radius $r=\lambda\pm \sqrt{\lambda^2-2k} $.
	
	Analogous to the argument in Example \ref{Ex2}, the only cylinders $\mathbb{S}^{k}_r(p)\times\mathbb{R}^{n-k}$  as $\lambda$-self-expanders are $\mathbb{S}^{k}_r(0)\times\mathbb{R}^{n-k}$.
	
	\end{example}

\begin{remark}Here we mention that   $\lambda$-hypersurfaces defined  in \cite{cheng2018complete}  and  $\lambda$-translating solitons defined  in \cite{lopez2018compact} are CWMC hypersurfaces  of weight  $e^{-\frac{|x|^2}{4}}$ and  $e^{\left<x,w\right>}$ respectively,  where $w\in \mathbb{R}^{n+1}$ is a constant vector.
\end{remark}

Now we state the equivalent characterization of  CWMC hypersurface.\\

The weighted area  of a measurable subset  $S\subset \Sigma$ with respect  to  the  weigth $e^{-f}$ is defined by
\begin{equation}\label{le-notation-eq-vol}\mathcal{A}_f(S):=\int_S e^{-f}d\sigma.
\end{equation}

It is known that  an $f$-minimal hypersurface  is a critical point of the weighted area functional defined in (\ref{le-notation-eq-vol}). 
In  \cite{mcgonagle2015hyperplane},  McGonagle-Ross  proved that a CWMC  hypersurface  $\Sigma$ of weight $e^{-f}$ is also the   critical points of the weighted area functional \eqref{le-notation-eq-vol}
but for compact normal variations $\func{F}{(-\varepsilon,\varepsilon)\times \Sigma}{\mathbb{R}^{n+1}}$   that preserve weighted volume, i.e. for variations $F$ which satisfies $\int_{\Sigma}\varphi e^{-f}d\sigma=0$ , where  $\varphi=\langle \partial_tF(0,x),{\bf n}(x)\rangle$. \\

On the  smooth metric measure space  $(\Sigma, g, e^{-f})$, there is a very important second-order elliptic operator: the drifted Laplacian
$
\Delta_{f}=\Delta-\left\langle \nabla f,\nabla\cdot\right\rangle.
$
It is well known that $\Delta_f$ is a densely defined self-adjoint
operator in $L^2(\Sigma, e^{-f}d\sigma)$, i.e. for 
$u$ and $v$ in $C^{\infty}_0(\Sigma)$, it holds that 
\begin{equation}
	\int_{\Sigma}(\Delta_{f}u) ve^{-f}d\sigma=-\int_{\Sigma}\left\langle \nabla u,\nabla v\right\rangle e^{-f}d\sigma.
\end{equation}

 In particular, if $\Sigma$ is a $\lambda$-self-expander, we denote by  $\mathscr{L}$ the corresponding  drifted Laplacian on $\Sigma$,  that is,  $\mathscr{L}=\Delta+\frac12\left<x,\nabla\cdot\right>$.

\section{Simons' type equations}\label{simons}

 In this section, we give some  
Simons' type equations for $\lambda$-self-expanders.

\begin{theorem}\label{le-simo}
	If $\Sigma$ is a $\lambda$-self-expander  immersed in $\mathbb{R}^{n+1}$, then
	\begin{align}\label{e2}
		&\mathscr{L}H=-|A|^2(H-\lambda)-\frac{H}{2},\\\label{ee5}
		&\mathscr{L}A=-\frac{1}{2}A-|A|^2A-\lambda A^2,\\\label{e3}
		&\mathscr{L}|A|^2=2|\nabla A|^2-|A|^2-2|A|^4-2\lambda \textup{tr}A^3,\\\label{e4}
		&\mathscr{L}|A|=\frac{|\nabla A|^2-|\nabla |A||^2}{|A|}-\frac{\lambda \textup{tr}A^3}{|A|}-|A|^3-\frac{|A|}{2}.
	\end{align}
\end{theorem}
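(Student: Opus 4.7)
The plan is to derive the four identities in the order \eqref{e2}, \eqref{ee5}, \eqref{e3}, \eqref{e4}, with the last two being algebraic consequences of \eqref{ee5}. Throughout I work at a point $p\in\Sigma$ in an orthonormal frame $\{e_i\}$ with $\nabla_{e_i}e_j(p)=0$, and use the basic identities $\bar\nabla_{e_i}x=e_i$, $\bar\nabla_{e_i}{\bf n}=-A(e_i)$, and $\bar\nabla_{e_i}e_j(p)=h_{ij}{\bf n}$, together with the full Codazzi symmetry $\nabla_ih_{jk}=\nabla_jh_{ik}=\nabla_kh_{ij}$ available for hypersurfaces in $\mathbb{R}^{n+1}$.

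For \eqref{e2}, setting $u:=\langle x,{\bf n}\rangle=-2(H-\lambda)$, a direct computation yields $\nabla_iu=-\langle x^\top,A(e_i)\rangle$ and, after differentiating once more and invoking $\sum_i\nabla_ih_{ij}=\nabla_j(\operatorname{tr}A)=-\nabla_jH$,
\begin{equation*}
\Delta u = H + \langle x^\top,\nabla H\rangle - |A|^2u.
\end{equation*}
Substituting $u=-2(H-\lambda)$ isolates $\Delta H$; adding $\tfrac12\langle x^\top,\nabla H\rangle$ to both sides produces \eqref{e2}.

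For \eqref{ee5}, I combine the structure equation with the classical Simons identity, which in the paper's convention $A=-\bar\nabla{\bf n}$, $H=-\operatorname{tr}A$ reads
\begin{equation*}
\Delta h_{ij}=-\nabla_i\nabla_jH-H(A^2)_{ij}-|A|^2 h_{ij}.
\end{equation*}
Differentiating $H=-u/2+\lambda$ twice and using Codazzi to rewrite $\sum_k(\nabla_jh_{ik})x^\top_k=\langle x^\top,\nabla h_{ij}\rangle$, I obtain
\begin{equation*}
\nabla_i\nabla_jH=\tfrac12 h_{ij} + \tfrac12\langle x^\top,\nabla h_{ij}\rangle - (H-\lambda)(A^2)_{ij}.
\end{equation*}
Substituting into Simons, the cancellation $-H(A^2)_{ij}+(H-\lambda)(A^2)_{ij}=-\lambda(A^2)_{ij}$ plus absorption of the $-\tfrac12\langle x^\top,\nabla h_{ij}\rangle$ term by $\mathscr{L}=\Delta+\tfrac12\langle x^\top,\nabla\cdot\rangle$ produces \eqref{ee5}.

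Equations \eqref{e3} and \eqref{e4} are then routine. Tracing \eqref{ee5} against $h_{ij}$ via $\mathscr{L}|A|^2=2h_{ij}\mathscr{L}h_{ij}+2|\nabla A|^2$, together with $\sum_{i,j}h_{ij}(A^2)_{ij}=\operatorname{tr}A^3$, gives \eqref{e3} directly. For \eqref{e4}, the Kato-type identity $\mathscr{L}|A|=(\mathscr{L}|A|^2-2|\nabla|A||^2)/(2|A|)$, valid wherever $|A|\neq 0$, rewrites \eqref{e3} as \eqref{e4}. The main obstacle is sign bookkeeping against the non-standard convention $H=-\operatorname{tr}A$; a useful sanity check is that all four identities reduce to the known self-expander formulas when $\lambda=0$ and evaluate correctly on the round sphere $\mathbb{S}^n_r(0)$.
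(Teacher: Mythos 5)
Your proof is correct and follows essentially the same route as the paper: both compute the Hessian of $H$ (equivalently of $\langle x,{\bf n}\rangle$) from the defining equation via Codazzi, insert it into the Simons identity $\Delta h_{ij}=-\nabla_i\nabla_jH-H(A^2)_{ij}-|A|^2h_{ij}$ to obtain \eqref{ee5}, and then deduce \eqref{e2}, \eqref{e3} and \eqref{e4} by tracing and the chain-rule identities for $|A|^2$ and $|A|$. Your explicit remark that \eqref{e4} is valid only where $|A|\neq 0$ is a small gain in precision over the paper's presentation.
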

\begin{proof}
	Let us fix a point $p\in \Sigma$, and choose a local orthonormal frame ${e_i}$,  $i=1, \ldots, n$,  for $\Sigma$ such that $\nabla_{e_i}e_j(p)=0$.  Let $h_{ij}=\langle \bar{\nabla}_{e_i}e_j, {\bf n}\rangle$. At this point $p$  we have
	\begin{align}\nonumber
		Hess(H)(e_i,e_j)&=\nabla_{e_j}\nabla_{e_i}(-\frac{\langle x,{\bf n}\rangle}{2}+\lambda)\\\nonumber
		&=-\frac{1}{2}e_j\langle x,-h_{ik}e_k\rangle\\\label{s1}
		&=-\frac{1}{2}\left( -h_{ij}-\langle x,h_{ijk}e_k\rangle-\langle x,h_{ik}h_{jk}{\bf n}\rangle \right)\\\nonumber
		&= \frac{1}{2}\langle A(e_i),e_j\rangle+\frac{1}{2}\langle(\nabla_{x^{\top}}A)(e_i),e_j\rangle+\frac{1}{2}\langle x,{\bf n}\rangle\langle A^2(e_i),e_j\rangle\\\nonumber
		&=\frac{1}{2}\langle A(e_i),e_j\rangle+\frac{1}{2}\langle(\nabla_{x^{\top}}A)(e_i),e_j\rangle-(H-\lambda)\langle A^2(e_i),e_j\rangle.
	\end{align}
	This implies that
	\begin{align}
		\Delta H=-\frac{1}{2}H -\frac{1}{2}\langle x,\nabla H\rangle-(H-\lambda)|A|^2.
	\end{align}
	Therefore
	\begin{align*}
		\mathscr{L}H=-|A|^2(H-\lambda)-\frac{H}{2}.
	\end{align*}
	So we have proved  \eqref{e2}.\\

	In order to prove \eqref{ee5}, recall the Simons' equation, that is 
	\begin{align}\label{ste}
	    \Delta A=-Hess(H)-HA^2-|A|^2A.
	\end{align}
	Combining \eqref{s1} with \eqref{ste} yields 
	\begin{align}\nonumber
	    \Delta A=-\frac{1}{2}\nabla_{x^{\top}}A-\frac{1}{2}A-|A|^2A-\lambda A^2.
	\end{align}
	Hence 
	\begin{align}
	    \mathscr{L} A=-\frac{1}{2}A-|A|^2A-\lambda A^2,
	\end{align}
	 which is \eqref{ee5}. For \eqref{e3}, we have that 
	\begin{align}\nonumber
	    \mathscr{L}|A|^2&=2\langle \mathscr{L}A,A\rangle+2|\nabla A|^2\\\nonumber
	    &=2|\nabla A|^2-|A|^2-2|A|^4-2\lambda\langle A^2,A\rangle.
	\end{align}
Further $\langle A^2,A\rangle=\textup{tr}A^3$. Therefore 
\begin{align*}
    \mathscr{L}|A|^2=2|\nabla A|^2-|A|^2-2|A|^4-2\lambda\textup{tr}A^3,
\end{align*}
	 which  is just \eqref{e3}.\\
	 \eqref{e4} follows  from  \eqref{e3} and the  following identity 
	\begin{align*}
		\mathscr{L}|A|^2=2|A|\mathscr{L}|A|+2|\nabla |A||^2.
	\end{align*}
\end{proof}

 For $\alpha\in \mathbb{R}$,  we define the operator $\mathcal{L}_\alpha=\Delta-\frac{\alpha}{2}\langle x, \nabla \cdot\rangle$  on the $\lambda$-self-expander  $\Sigma$. We get the following equations.
\begin{cor}
	Let $\Sigma$ be a  $\lambda$-self-expander immersed in $\mathbb{R}^{n+1}$.  Then, for $\alpha\in \mathbb{R}$ it holds that
	\begin{align}\label{spn7}
		&\mathcal{L}_\alpha H=-|A|^2(H-\lambda)-\frac{1}{2}H-\frac{\alpha+1}{2}\langle x,\nabla H\rangle,\\\label{le-spn8}
		&\mathcal{L}_\alpha H=-|A|^2(H-\lambda)-\frac{1}{2}H-\frac{\alpha+1}{4}A(x^{\top},x^{\top}),
	\end{align}
and 
\begin{align}\label{au1}
		\mathcal{L}_\alpha H^2=&-2|A|^2(H-\lambda)H-H^2+2\left|\nabla H-\frac{\alpha+1}{4}Hx^{\top}\right|^2\\\nonumber
	&-\frac{(\alpha+1)^2}{8}H^2|x^{\top}|^2.
\end{align}
\end{cor}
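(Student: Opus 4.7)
The plan is to derive all three identities by reducing the operator $\mathcal{L}_\alpha$ to the operator $\mathscr{L}$ already handled in Theorem \ref{le-simo}, so that the work consists only of bookkeeping the extra first-order term. Writing
\begin{equation*}
\mathcal{L}_\alpha = \mathscr{L} - \frac{\alpha+1}{2}\langle x,\nabla\cdot\rangle,
\end{equation*}
the identity \eqref{spn7} will follow immediately from \eqref{e2} by subtracting $\frac{\alpha+1}{2}\langle x,\nabla H\rangle$ from both sides.

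For \eqref{le-spn8} I would rewrite $\langle x,\nabla H\rangle$ purely in terms of the second fundamental form. Differentiating the $\lambda$-self-expander equation $H=-\frac{\langle x,{\bf n}\rangle}{2}+\lambda$ in the direction $e_i$ and using $\bar{\nabla}_{e_i}{\bf n}=-h_{ij}e_j$, I obtain
\begin{equation*}
e_i(H)=\frac{1}{2}h_{ij}\langle x,e_j\rangle=\frac{1}{2}h_{ij}x^{\top}_j,
\end{equation*}
so that $\langle x,\nabla H\rangle=\langle x^{\top},\nabla H\rangle=\frac{1}{2}A(x^{\top},x^{\top})$. Substituting this into \eqref{spn7} gives \eqref{le-spn8}.

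Finally, for \eqref{au1} I would apply the product rule for the drifted Laplacian,
\begin{equation*}
\mathcal{L}_\alpha H^2 = 2H\,\mathcal{L}_\alpha H + 2|\nabla H|^2,
\end{equation*}
insert \eqref{spn7}, and then complete the square. The remaining task reduces to checking the algebraic identity
\begin{equation*}
2|\nabla H|^2 - (\alpha+1)H\langle x,\nabla H\rangle
= 2\left|\nabla H - \frac{\alpha+1}{4}Hx^{\top}\right|^2 - \frac{(\alpha+1)^2}{8}H^2|x^{\top}|^2,
\end{equation*}
which is a direct expansion of the square using $\langle \nabla H, x\rangle = \langle \nabla H, x^{\top}\rangle$. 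Combined with the term $-2|A|^2(H-\lambda)H-H^2$ coming from $2H\mathcal{L}_\alpha H$, this yields \eqref{au1}.

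There is no real obstacle here; the derivation is a straightforward consequence of \eqref{e2}, the $\lambda$-self-expander equation, and the identity $\langle x,\nabla H\rangle=\frac{1}{2}A(x^{\top},x^{\top})$. The only step requiring care is tracking the factor $\frac{\alpha+1}{2}$ (versus $\frac{\alpha+1}{4}$ after substituting the expression for $\langle x,\nabla H\rangle$) to ensure the square-completion in \eqref{au1} produces exactly the stated coefficients.
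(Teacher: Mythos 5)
Your proposal is correct and follows exactly the paper's own argument: \eqref{spn7} from \eqref{e2} via $\mathcal{L}_\alpha=\mathscr{L}-\frac{\alpha+1}{2}\langle x,\nabla\cdot\rangle$, the identity $\langle x,\nabla H\rangle=\frac{1}{2}A(x^{\top},x^{\top})$ obtained by differentiating the defining equation, and the product rule plus completing the square for \eqref{au1}. The coefficients check out, so there is nothing to add.
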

 \begin{proof}
	Since  $\mathscr{L}H=-|A|^2(H-\lambda)-\frac{H}{2}$,
	\begin{align*}
		\mathcal{L}_\alpha H=-|A|^2(H-\lambda)-\frac{1}{2}H-\frac{\alpha+1}{2}\langle x,\nabla H\rangle.
	\end{align*}
	Take a local orthonormal frame $\{e_i\}$, $i=1,...,n,$ for $\Sigma$. From $H=\lambda-\frac{1}{2}\langle x,{\bf n}\rangle$,
	\begin{align*}
		2\nabla_{e_i}H=&-\langle \nabla_{e_i}x, {\bf n}\rangle-\langle x,\nabla_{e_i}{\bf n}\rangle\\ 
		=&h_{ij}\langle x,e_j\rangle
	\end{align*}
	and hence 
	\begin{align*}
		\langle x,\nabla H\rangle=\langle x,e_i\rangle\nabla_{e_i}H=\frac{1}{2}h_{ij}\langle x,e_i\rangle\langle x,e_j\rangle=\frac{1}{2}A(x^{\top},x^{\top}).
	\end{align*}
	By this and Equation \eqref{spn7}, we have that 
	\begin{align*}
		\mathcal{L}_\alpha H&=-|A|^2(H-\lambda)-\frac{1}{2}H-\frac{\alpha+1}{2}\langle x,\nabla H\rangle\\
		&=-|A|^2(H-\lambda)-\frac{1}{2}H-\frac{\alpha+1}{4}A(x^{\top},x^{\top}).
	\end{align*}
On the other hand, combining \eqref{spn7} and the equality $\mathcal{L}_\alpha H^2=2H\mathcal{L}_\alpha H+2|\nabla H|^2$ we get 
\begin{align*}
	\mathcal{L}_\alpha H^2=&-2|A|^2(H-\lambda)H-H^2-(\alpha+1)H\langle x,\nabla H\rangle+2|\nabla H|^2\\
	=&-2|A|^2(H-\lambda)H-H^2+2\left|\nabla H-\frac{\alpha+1}{4}Hx^{\top}\right|^2\\
	&-\frac{(\alpha+1)^2}{8}H^2|x^{\top}|^2.
\end{align*}
\end{proof}

\section{Rigidity of spheres}\label{le-section3}
In this section, we will prove Theorems \ref{qg} and  \ref{dxs} which characterize the spheres as closed $\lambda$-self-expanders. In order to prove Theorem \ref{qg}, we need the following 

\begin{lemma}\label{lem-1}Let $\Sigma$ be an immersed $\lambda$-self-expander in $\mathbb{R}^{n+1}$. Then
	\begin{align}\label{f2}
		&\mathscr{L}|x|^2=|x|^2-2\lambda\langle x,{\bf n}\rangle+2n,\\\label{f3}
		&\mathcal{L}_1|x|^2=(2H-\lambda)^2+2n-\lambda^2-|x^{\top}|^2,
	\end{align}
	where the operator $\mathcal{L}_1=\Delta-\frac{1}{2}\langle x^{\top},\nabla\cdot\rangle$.
\end{lemma}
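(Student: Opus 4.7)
The plan is to reduce both identities to a single computation of $\Delta |x|^2$ on a hypersurface, then apply the drift terms defining $\mathscr{L}$ and $\mathcal{L}_1$, and finally use the $\lambda$-self-expander equation \eqref{le-1} to close things up.

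\textbf{Step 1.} I would first record the three standard facts for a hypersurface $\Sigma\hookrightarrow \mathbb{R}^{n+1}$ applied to the position vector $x$: in a local orthonormal frame $\{e_i\}$ on $\Sigma$,
\begin{align*}
\nabla|x|^2 = 2x^{\top},\qquad \Delta x = -H\,{\bf n},\qquad |\nabla x|^2 = n,
\end{align*}
the middle identity using the sign convention $A=-\bar{\nabla}{\bf n}$ and $H=-\textup{tr}\,A$ in force in the paper. Combining them via $\Delta |x|^2 = 2|\nabla x|^2+2\langle x,\Delta x\rangle$ yields the basic identity
\begin{align*}
\Delta |x|^2 = 2n - 2H\langle x,{\bf n}\rangle.
\end{align*}

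\textbf{Step 2 (proof of \eqref{f2}).} Adding the drift term for $\mathscr{L}=\Delta+\tfrac12\langle x,\nabla\cdot\rangle$ and noting $\langle x,\nabla|x|^2\rangle = 2\langle x, x^{\top}\rangle = 2|x^{\top}|^2$, I get
\begin{align*}
\mathscr{L}|x|^2 = 2n - 2H\langle x,{\bf n}\rangle + |x^{\top}|^2.
\end{align*}
I would then use the orthogonal splitting $|x^{\top}|^2 = |x|^2 - \langle x,{\bf n}\rangle^2$ and the self-expander equation in the form $\langle x,{\bf n}\rangle = 2(\lambda - H)$, so that $2H\langle x,{\bf n}\rangle = 2\lambda\langle x,{\bf n}\rangle - \langle x,{\bf n}\rangle^2$. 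The two $\langle x,{\bf n}\rangle^2$ terms cancel, and \eqref{f2} follows.

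\textbf{Step 3 (proof of \eqref{f3}).} For $\mathcal{L}_1=\Delta-\tfrac12\langle x^{\top},\nabla\cdot\rangle$ the drift contributes with the opposite sign, giving
\begin{align*}
\mathcal{L}_1|x|^2 = 2n - 2H\langle x,{\bf n}\rangle - |x^{\top}|^2.
\end{align*}
Substituting $\langle x,{\bf n}\rangle = 2(\lambda-H)$ turns $-2H\langle x,{\bf n}\rangle$ into $4H^2 - 4H\lambda$, and completing the square $4H^2-4H\lambda = (2H-\lambda)^2-\lambda^2$ immediately produces \eqref{f3}.

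There is no genuine obstacle here; the lemma is essentially bookkeeping around the expander equation. The only points that require care are the sign of $\Delta x$ dictated by the paper's conventions, and the interaction between the orthogonal decomposition $|x|^2=|x^{\top}|^2+\langle x,{\bf n}\rangle^2$ and the defining equation, which is precisely what makes the $\langle x,{\bf n}\rangle^2$ terms cancel in \eqref{f2} and what makes the completion of the square work in \eqref{f3}.
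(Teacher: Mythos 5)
Your proposal is correct and follows essentially the same route as the paper: compute $\Delta|x|^2=2n-2H\langle x,{\bf n}\rangle$ from $\Delta x=-H{\bf n}$, add the drift terms (each contributing $\pm|x^{\top}|^2$), and substitute $\langle x,{\bf n}\rangle=2(\lambda-H)$ together with the splitting $|x|^2=|x^{\top}|^2+\langle x,{\bf n}\rangle^2$. The cancellations and the completion of the square you describe are exactly those in the paper's proof.
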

\begin{proof}
	Recall that for any hypersurface, we have $\Delta x=-H{\bf n}$. Then  
	\begin{align}\nonumber
		\Delta |x|^2&=2\langle x, \Delta x\rangle +2|\nabla x|^2\\\label{le-comp}
		&=-2H\langle x, {\bf n}\rangle+2n.
	\end{align}
	Since $H=\lambda-\frac{\langle x,{\bf n}\rangle}{2}$, we get
	\begin{align*}
		\mathscr{L}|x|^2=\Delta |x|^2+\frac{1}{2}\langle x,\nabla |x|^2\rangle
		&=-2H\langle x, {\bf n}\rangle+2n +|x^{\top}|^2\\
		&=-2\left(\lambda-\frac{\langle x,{\bf n}\rangle}{2} \right) \langle x, {\bf n}\rangle+2n +|x^{\top}|^2\\
		&=|x|^2-2\lambda\langle x,{\bf n}\rangle+2n,
	\end{align*}
	and 
	\begin{align*}
		\mathcal{L}_1|x|^2=\Delta |x|^2-\frac{1}{2}\langle x,\nabla |x|^2\rangle=&-2H\langle x, {\bf n}\rangle+2n -|x^{\top}|^2\\
		&=4H(H-\lambda)+2n -|x^{\top}|^2\\
		&=4H^2-4\lambda H+2n-|x^{\top}|^2\\
		&=(2H-\lambda)^2+2n-\lambda^2-|x^{\top}|^2.
	\end{align*}
	
\end{proof}
 Using \eqref{f2}, we prove   Theorem \ref{qg}, that is
\begin{theorem}(Theorem \ref{qg}) \label{qg2}
		Let  $\Sigma$ be a closed $\lambda$-self-expander immersed in $\mathbb{R}^{n+1}$. Then
		\begin{itemize}
		\item [(i)]  $\lambda\geq\sqrt{2n}$. Moreover, the sphere $\mathbb{S}^{n}_{\sqrt{2n}}(0)$ is the only closed $\lambda$-self-expander  with $\lambda=\sqrt{2n}$. 
		\item [(ii)] If 
		\begin{align}\label{f5}
			|A|^2\leq-\frac{1}{2}+ \frac{\lambda\left( \lambda-\sqrt{\lambda^2-2n}\right) }{2n},
		\end{align}
		then $\Sigma$ is a sphere $\mathbb{S}^n_r(0)$ with $r=\lambda+\sqrt{\lambda^2-2n}$.
		\end{itemize}
	\end{theorem}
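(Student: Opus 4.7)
The plan is as follows. For part (i), I would apply the maximum principle to $|x|^2$ on the closed hypersurface $\Sigma$. At a maximum point $p$, the condition $x^{\top}(p)=0$ forces $\langle x,\mathbf{n}\rangle(p)=\pm|x(p)|$, and the general formula $\Delta|x|^2=2n-2H\langle x,\mathbf{n}\rangle$ (obtained in \eqref{le-comp}) combined with $\Delta|x|^2(p)\leq 0$ gives $H(p)\langle x,\mathbf{n}\rangle(p)\geq n$. Choosing the orientation so that $t:=\langle x,\mathbf{n}\rangle(p)>0$ and substituting $H=\lambda-t/2$, I obtain the quadratic inequality $t^2-2\lambda t+2n\leq 0$, which admits a positive root only when $\lambda>0$ and $\lambda^2\geq 2n$; hence $\lambda\geq\sqrt{2n}$.

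For the equality case $\lambda=\sqrt{2n}$, substituting $\langle x,\mathbf{n}\rangle=2(\lambda-H)$ and $|x|^2=|x^{\top}|^2+4(\lambda-H)^2$ into \eqref{f2} yields the rearrangement
\[
\mathscr{L}|x|^2=|x^{\top}|^2+(2H-\lambda)^2+2n-\lambda^2.
\]
When $\lambda^2=2n$ the right-hand side is non-negative; since $\mathscr{L}$ is self-adjoint with respect to $e^{|x|^2/4}d\sigma$, integration on the closed $\Sigma$ gives $\int_{\Sigma}\mathscr{L}|x|^2\,e^{|x|^2/4}d\sigma=0$ and therefore forces both $|x^{\top}|\equiv 0$ and $H\equiv\lambda/2=\sqrt{n/2}$. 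The first says the position vector is everywhere normal, so $\Sigma$ lies in a sphere centered at the origin, and the mean curvature pins the radius at $r=n/H=\sqrt{2n}$.

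For part (ii), set $r_2=\lambda+\sqrt{\lambda^2-2n}$ and $r_1=\lambda-\sqrt{\lambda^2-2n}$, so $r_1r_2=2n$. Using $\lambda r_1=\lambda(2n/r_2)$, the pinching hypothesis rewrites neatly as $|A|^2\leq n/r_2^2$. Cauchy–Schwarz $H^2\leq n|A|^2$ then yields $|H|\leq n/r_2=r_1/2$. The quadratic $f(H):=2H(\lambda-H)=H\langle x,\mathbf{n}\rangle$ is increasing on $(-\infty,\lambda/2]$, and since $r_1/2\leq\lambda/2$ a short computation using $r_1^2-2\lambda r_1+2n=0$ gives $f(r_1/2)=n$; hence $H\langle x,\mathbf{n}\rangle\leq n$ pointwise on $\Sigma$.

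Consequently $\Delta|x|^2=2n-2H\langle x,\mathbf{n}\rangle\geq 0$ on the closed $\Sigma$, so $|x|^2$ is subharmonic and thus constant (since $\int_{\Sigma}\Delta|x|^2\,d\sigma=0$). Therefore $\Sigma=\mathbb{S}^n_r(0)$ for some $r>0$, which the $\lambda$-self-expander equation restricts to $r\in\{r_1,r_2\}$, and the pinching $n/r^2=|A|^2\leq n/r_2^2$ then selects $r=r_2$. The main obstacle I expect is identifying the sharp algebraic bridge from the pinching on $|A|^2$ to the pointwise bound $H\langle x,\mathbf{n}\rangle\leq n$; once that pointwise inequality is secured, the rest reduces to the standard fact that a subharmonic function on a closed manifold is constant.
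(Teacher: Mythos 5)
Your proposal is correct, and part (ii) follows a genuinely different and more streamlined route than the paper's. For part (i) you and the paper do essentially the same thing: work at the point farthest from the origin and apply AM--GM (your inequality $H(p)\langle x,{\bf n}\rangle(p)\geq n$ at the maximum of $|x|^2$ is exactly the paper's $H(p)\geq n/|p|$). For the rigidity at $\lambda=\sqrt{2n}$ you integrate the identity $\mathscr{L}|x|^2=|x^{\top}|^2+(2H-\lambda)^2+2n-\lambda^2$ (which is \eqref{f3} shifted by $2|x^{\top}|^2$) against $e^{|x|^2/4}d\sigma$, whereas the paper bounds $\mathscr{L}|x|^2\geq(|x|-\lambda)^2$ and invokes the maximum principle; both work. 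For part (ii) the paper analyzes the minimum point $p$ of $|x|$: it rules out $p=0$, rules out $\langle p,{\bf n}\rangle=-|p|$, extracts the lower bound $\min_{\Sigma}|x|\geq\lambda+\sqrt{\lambda^2-2n}$, and only then obtains $\mathscr{L}|x|^2\geq 0$. You instead observe that the pinching is precisely $|A|^2\leq n/r_2^2$, so Cauchy--Schwarz gives the pointwise bound $|H|\leq r_1/2$, whence $H\langle x,{\bf n}\rangle=2H(\lambda-H)\leq n$ everywhere and $\Delta|x|^2\geq 0$ for the ordinary Laplacian and measure. This eliminates the paper's case analysis at the minimum point entirely; the algebra checks out ($r_1r_2=2n$, $f(r_1/2)=n$, and the pinching forces $r=r_2$ at the end).

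One caveat on part (i): you write that you ``choose the orientation so that $t>0$.'' You are not free to do this, since reversing ${\bf n}$ replaces $\lambda$ by $-\lambda$ in \eqref{le-1}; that normalization alone would only yield $|\lambda|\geq\sqrt{2n}$. The correct justification is the one the paper uses implicitly: with the stated outward-normal convention, at the point farthest from the origin one has $\langle x,{\bf n}\rangle=|x|>0$. This is a phrasing slip rather than a gap, because the needed sign is supplied by the convention in the definition of a $\lambda$-self-expander.
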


\begin{proof}
     Since $\Sigma$ is closed, there exists a point $p\neq 0$ where $|x|$ achieves its maximum.  At $p$, $x$ and ${\bf n}$ are in the same direction, and $H(p)\geq\frac{n}{|p|}$. This implies that $\lambda=H(p)+\frac{\langle p,{\bf n}(p)\rangle}{2}\geq\frac{n}{|p|}+\frac{|p|}{2} \geq2\frac{\sqrt{n}}{\sqrt{|p|}}\cdot \frac{\sqrt{|p|}}{\sqrt{2}}=\sqrt{2n}$. 
     
      If $\lambda=\sqrt{{2n}}$,  \eqref{f2} implies
     \begin{align}
         \mathscr{L}|x|^2\geq(|x|-\lambda)^2 -\lambda^2+2n=(|x|-\lambda)^2\geq0.
     \end{align}
     By the maximum principle,  $|x|$ must be constant. We conclude that $\Sigma=\mathbb{S}^{n}_{\sqrt{2n}}(0)$.  So (i) is confirmed.
     
    Now we prove (ii).  Since $\Sigma$ is closed, we consider the point $p$ where $|x|$ achieves its minimum. We claim that $p\neq 0$. In fact, if $p=0$,   then $H(p)=\lambda$. By \eqref{f5}, we have 
	\begin{align*}
		\lambda^2=H^2(p)\leq n|A|^2\leq-\frac{n}{2}+ \frac{\lambda\left( \lambda-\sqrt{\lambda^2-2n}\right) }{2}.
	\end{align*}	
	This implies  $$0\leq-\frac{n}{2}- \frac{\lambda\left( \lambda+\sqrt{\lambda^2-2n}\right) }{2},$$
	which is a contradiction. Therefore $p\neq0$.\\
	Now, since  $p\neq0$, it follows that either  $H(p)=\lambda+\frac{|x|(p)}{2}$ or  $H(p)=\lambda -\frac{|x|(p)}{2}$. 

We claim that  $H(p)=\lambda-\frac{|x|(p)}{2}$,  which is equivalent to say that $\langle p, {\bf n}\rangle=|x|(p)$. In fact, if  $H(p)=\lambda+\frac{|x|(p)}{2}$, by \eqref{f5} we get 
	\begin{align}\label{ineq-p}
		\left( \lambda+\frac{|x|(p)}{2}\right)^2 =H^2(p)\leq n|A|^2\leq-\frac{n}{2}+ \frac{\lambda\left( \lambda-\sqrt{\lambda^2-2n}\right) }{2}.
	\end{align}
	 By (i), we know  that  $\Sigma$ satisfies  $\lambda\geq \sqrt{2n}$. This property and \eqref{ineq-p} imply that
	$$|x|(p)\leq-\lambda-\sqrt{\lambda^2-2n},$$
	which is a contradiction.  Therefore, we  have confirmed the claim.\\
	Applying  \eqref{f5} again, we obtain 
	\begin{align}
		\left( \lambda-\frac{|x|(p)}{2}\right) ^2=H^2(p)\leq n|A|^2\leq n\left( -\frac{1}{2}+ \frac{\lambda\left( \lambda-\sqrt{\lambda^2-2n}\right) }{2n}\right),
	\end{align}
	which gives
	\begin{align}\label{l-s-exx-1}
	    |x|(p)\geq\frac{1}{2\lambda}(|x|^2(p)-2\lambda|x|(p)+2n)+\lambda+\sqrt{\lambda^2-2n}.
	\end{align}
	Further, since $|x|$ achieves its minimum at $p$, by \eqref{f2},  we have $$\mathscr{L}|x|(p)=|x|^2(p)-2\lambda|x|(p)+2n\geq0.$$ 
	Therefore,  \eqref{l-s-exx-1} implies 
	\begin{align}\label{sss10}
		\min_{\Sigma}|x|= |x|(p)\geq \lambda+\sqrt{\lambda^2-2n}.
	\end{align}
	Combining \eqref{sss10} with \eqref{f2} yields
	$$\mathscr{L}(|x|^2)\geq 0.$$
	By the maximum principle, $|x|^2=constant$ for $\Sigma$. We conclude that $\Sigma$ is $\mathbb{S}^n_{r}(0)$.
     
\end{proof}

  For complete $\lambda$-self-expanders,   \eqref{f3} in Lemma \ref{lem-1} has  the following  consequence:
\begin{theorem}\label{le-s6}
	Let  $\Sigma$ be a complete properly immersed $\lambda$-self-expander in $\mathbb{R}^{n+1}$.  If $\Sigma$ satisfies 
	\begin{align}\label{le-oga}
		\lambda^2 \geq 2n+(\lambda-2H)^2,
	\end{align}  
	then $\Sigma$ is a sphere $\mathbb{S}^n_r(0)$.
\end{theorem}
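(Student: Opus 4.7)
The plan is to use equation \eqref{f3} of Lemma \ref{lem-1} to show that $|x|^2$ is $\mathcal{L}_1$-superharmonic under the given hypothesis, and then to apply the strong maximum principle at an interior minimum of $|x|^2$ whose existence is guaranteed by the properness of the immersion.

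First I would rewrite \eqref{f3} in the form
\begin{align*}
\mathcal{L}_1 |x|^2 = -\bigl[\lambda^2 - 2n - (\lambda - 2H)^2\bigr] - |x^\top|^2.
\end{align*}
Hypothesis \eqref{le-oga} forces the bracketed expression to be non-negative, so
\begin{align*}
\mathcal{L}_1 |x|^2 \leq -|x^\top|^2 \leq 0
\end{align*}
pointwise on $\Sigma$. This is the only place the hypothesis enters, and it gives exactly the sign needed to run the maximum principle.

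Next, since $\Sigma$ is properly immersed in $\mathbb{R}^{n+1}$, the continuous function $|x|^2\big|_\Sigma$ is proper (its sublevel sets are compact). Consequently, $|x|^2$ attains its infimum at some point $p_0\in\Sigma$. Because $\Sigma$ is a complete boundaryless (connected) manifold, $p_0$ is an interior minimum. The operator $\mathcal{L}_1=\Delta-\tfrac{1}{2}\langle x^\top,\nabla\cdot\rangle$ is strictly elliptic, has smooth coefficients, and has vanishing zeroth-order term, so the classical Hopf strong maximum principle applies: an $\mathcal{L}_1$-superharmonic function that attains a local minimum at an interior point of a connected manifold must be constant. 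Therefore $|x|^2\equiv r^2$ for some $r>0$ (the case $r=0$ is impossible since $\dim\Sigma=n\geq 1$), which places $\Sigma$ in the sphere $\mathbb{S}^n_r(0)$. The properness now forces $\Sigma$ to be compact (being the preimage of the compact set $\mathbb{S}^n_r(0)$), the immersion $\Sigma\to\mathbb{S}^n_r(0)$ is a local diffeomorphism onto a compact connected manifold of the same dimension, and the standard covering argument yields $\Sigma=\mathbb{S}^n_r(0)$.

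The step I expect to require the most care is the application of the strong maximum principle on the possibly non-compact $\Sigma$: the properness of the immersion is exactly what converts the infimum of $|x|^2$ into an honestly attained minimum, after which Hopf's principle finishes the argument and no additional machinery (such as an Omori--Yau or weighted-integral maximum principle) is needed. The remaining computation is a one-line rearrangement of Lemma \ref{lem-1}, and the conclusion that $\Sigma$ fills out the whole sphere is a routine compactness/dimension argument.
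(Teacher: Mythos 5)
Your proposal is correct and follows essentially the same route as the paper: both rewrite \eqref{f3} under hypothesis \eqref{le-oga} to get $\mathcal{L}_1|x|^2\leq 0$, use properness to guarantee that $|x|^2$ attains its minimum, and invoke the strong maximum principle to conclude $|x|^2$ is constant, hence $\Sigma=\mathbb{S}^n_r(0)$. The extra detail you supply about the final covering/compactness step is a sound elaboration of what the paper leaves implicit.
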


\begin{proof}
    By \eqref{f3} and hypothesis \eqref{le-oga}, we have 
    \begin{align*}
        \mathcal{L}_1|x|^2=(2H-\lambda)^2+2n-\lambda^2-|x^{\top}|^2\leq0.
    \end{align*}
    Since $\Sigma$ is proper, $|x|^2$ achieves its minimum. By the maximum principle we conclude that $\Sigma$ is $\mathbb{S}^n_{r}(0)$.
\end{proof}

\begin{remark}
The condition  \eqref{le-oga} implies that  $H$ is   bounded  by $\frac{1}{2}(|\lambda|+\sqrt{\lambda^2-2n})$. Then, the  Gaussian weighted mean curvature  $H-\frac{\langle x,{\bf n}\rangle}{2}$ of $\Sigma$ is  $2H-\lambda$ and  also bounded. In \cite{cheng2019volume}, the second author of the present paper, Vieira and Zhou  (\cite[Theorem 1.4]{cheng2019volume}) proved that if $\Sigma$ is an immersed hypersurface  in $\mathbb{R}^{n+1}$,  with bounded  Gaussian weighted mean curvature, then the polynomial area growth is equivalent to the properness of the hypersurface.  Therefore, in  Theorem \ref{le-s6}, it is possible to replace  condition of properness of the hypersurface  by the polynomial area growth condition. 
\end{remark}

Now we prove Theorem \ref{dxs}.
\begin{theorem}(Theorem \ref{dxs}) \label{dxs-1}Let  $\Sigma$, $n\geq2$, be a closed mean convex $\lambda$-self-expander immersed in $\mathbb{R}^{n+1}$. If
		\begin{align}\label{ineq-c}
			H\textup{tr}A^3+|A|^4\leq0,
		\end{align}
		then $\Sigma$ must be  a sphere $\mathbb{S}^n_r(0)$.
	\end{theorem}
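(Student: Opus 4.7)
The strategy is to combine the Simons-type identities of Theorem~\ref{le-simo} with a maximum-principle argument on the umbilicity deficit $w := |A|^2/H^2$. Since $\Sigma$ is closed, Theorem~\ref{qg}(i) gives $\lambda \ge \sqrt{2n}>0$, and the identity $\Delta|x|^2 = 2n - 2H\langle x,\mathbf{n}\rangle$ applied at a maximizer of $|x|^2$ on $\Sigma$ shows $H>0$ there. Since the hypothesis forces $A=0$ wherever $H=0$, the open set $\{H>0\}$ is non-empty; on it $w \ge 1/n$, with equality exactly at umbilic points.

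A direct calculation using $\mathscr{L}|A|^2$ from \eqref{e3} and $\mathscr{L}H^2 = 2H\mathscr{L}H + 2|\nabla H|^2$ (via \eqref{e2}) yields on $\{H>0\}$
\[
\mathscr{L}w + \tfrac{4}{H}\langle\nabla w,\nabla H\rangle = \tfrac{2}{H^4}\bigl[H^2|\nabla A|^2 - |A|^2|\nabla H|^2\bigr] - \tfrac{2\lambda}{H^3}\bigl[H\,\mathrm{tr}A^3 + |A|^4\bigr].
\]
By the hypothesis (and $\lambda,H\ge 0$) the second bracket on the right is non-positive. At an interior maximum $p$ of $w$, the condition $\nabla w(p)=0$ translates to $H\,h_{ij}h_{ij;k} = |A|^2 H_{;k}$ at $p$, and the Cauchy--Schwarz inequality $(\sum_{ij}h_{ij}h_{ij;k})^2 \le |A|^2\sum_{ij}h_{ij;k}^2$ summed over $k$ gives $H^2|\nabla A|^2 \ge |A|^2|\nabla H|^2$ at $p$. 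Since also $\mathscr{L}w(p)\le 0$, both brackets must vanish at $p$: in particular $H\,\mathrm{tr}A^3 + |A|^4 = 0$ at $p$, and the equality case of Cauchy--Schwarz forces $h_{ij;k}(p) = (H_{;k}/H)(p)\,h_{ij}(p)$.

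The concluding step is an algebraic rigidity argument at $p$. By the identity used in the corollary, $H\,\mathrm{tr}A^3 + |A|^4 = -\sum_{i<j}\lambda_i\lambda_j(\lambda_i-\lambda_j)^2$, so the principal curvatures at $p$ satisfy $\sum_{i<j}\lambda_i\lambda_j(\lambda_i-\lambda_j)^2 = 0$. Combining this with the Codazzi symmetry $h_{ij;k}=h_{ik;j}$ and the relation $h_{ij;k}(p) = (H_{;k}/H)\,h_{ij}(p)$---which, in principal-curvature coordinates at $p$, forces $H_{;k}\lambda_i = 0$ for all $i\neq k$---together with mean convexity, one concludes that all $\lambda_i(p)$ must coincide. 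Hence $w(p)=1/n$, and since $w\ge 1/n$ everywhere, $w\equiv 1/n$. Thus $\Sigma$ is totally umbilic, and by Example~\ref{Ex2} it is a sphere $\mathbb{S}^n_r(0)$. The main technical obstacle is this last algebraic/Codazzi step: one must rule out the ``rank-one'' alternative, where at $p$ only one principal curvature is non-zero (this configuration also satisfies $H\,\mathrm{tr}A^3+|A|^4=0$), and this is precisely where mean convexity $H\ge 0$ and the closedness of $\Sigma$ enter.
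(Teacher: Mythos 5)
Your computation of the evolution identity for $w=|A|^2/H^2$ is correct, and the reduction at an interior maximum $p$ (vanishing of both brackets, equality in Cauchy--Schwarz, $h_{ij;k}(p)=(H_{;k}/H)h_{ij}(p)$) is sound. The proof nevertheless has a genuine gap at exactly the place you flag as "the main technical obstacle": the pointwise information you extract at $p$ does \emph{not} imply that the principal curvatures at $p$ coincide. The configuration $\lambda_1\neq 0$, $\lambda_2=\dots=\lambda_n=0$ satisfies $H\,\mathrm{tr}A^3+|A|^4=-\sum_{i<j}\lambda_i\lambda_j(\lambda_i-\lambda_j)^2=0$, is mean convex, and is compatible with the Codazzi relation $H_{;k}\lambda_i=0$ for $i\neq k$ (e.g.\ with $\nabla H(p)=0$); it gives $w(p)=1$, not $1/n$, so your concluding chain "$w(p)=1/n$, hence $w\equiv 1/n$" breaks down. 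Worse, since the individual terms $\lambda_i\lambda_j(\lambda_i-\lambda_j)^2$ have mixed signs under mean convexity alone, the vanishing of their sum at one point does not even force each term to vanish. Mean convexity and closedness cannot rescue this at the level of a single point: the obstruction is that a one-point analysis cannot distinguish the sphere from a rank-one second fundamental form. There is also a smaller unaddressed point: you take an interior maximum of $w$ on the open set $\{H>0\}$, but if $\{H=0\}\neq\emptyset$ the supremum need not be attained there (you would first need to show $H>0$ everywhere, e.g.\ by a strong minimum principle applied to \eqref{e2}, which you do not do).

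For contrast, the paper avoids both issues by working globally rather than at one point: it regularizes the quotient as $\sqrt{|A|^2+\varepsilon}/(H+\varepsilon)$, integrates the resulting differential inequality against the weight $e^{|x|^2/4}$ over the closed $\Sigma$, and lets $\varepsilon\to 0$ to conclude that $|A|/H$ is constant on each component of $\{H\neq 0\}$. This yields $|\nabla A|=|\nabla|A||$ on an \emph{open set}, which is the hypothesis needed for Huisken's classification (as in the proof of Theorem 0.17 of \cite{colding}): since $\Sigma$ is closed there is a point where $A$ has rank $n\geq 2$, forcing $\nabla A=0$ and constant $H$ on a component, and a continuity argument plus Lawson's theorem \cite{lawson} then rules out the rank-one and cylindrical alternatives and identifies $\Sigma$ as $\mathbb{S}^n_r(0)$. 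If you want to keep a maximum-principle flavour, note that your identity can be rewritten as $\mathscr{L}w+\frac{2}{H}\langle\nabla w,\nabla H\rangle=2\left|\nabla\frac{A}{H}\right|^2-\frac{2\lambda}{H^3}\left(H\,\mathrm{tr}A^3+|A|^4\right)\geq 0$ on $\{H>0\}$, whence the strong maximum principle makes $w$ constant near its maximum and then globally; but even then you land on $|\nabla A|=|\nabla|A||$ on an open set and still need the Huisken--Lawson classification step, not a pointwise algebraic argument, to finish.
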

\begin{proof}

	Let us consider $w=\frac{\sqrt{|A|^2+\varepsilon}}{H+\varepsilon}$, where $\varepsilon$ is a positive constant. Then 
	\begin{align}\nonumber
		\mathscr{L}(\sqrt{|A|^2+\varepsilon})&=(H+\varepsilon)\mathscr{L}w+w\mathscr{L}H+2\langle \nabla w,\nabla H\rangle\\\label{hhu}
		&=(H+\varepsilon)\mathscr{L}w+w(-|A|^2H-\frac{1}{2}H+\lambda|A|^2)+2\langle \nabla w,\nabla H\rangle.
	\end{align}
	On the other hand, by \eqref{e3}, we get 
	\begin{align*}
		\mathscr{L}(\sqrt{|A|^2+\varepsilon})&=\frac{1}{2\sqrt{|A|^2+\varepsilon}}\left( \mathscr{L}|A|^2-2\left|\nabla\sqrt{|A|^2+\varepsilon} \right|^2 \right) \\
		&=\frac{1}{\sqrt{|A|^2+\varepsilon}}\left( |\nabla A|^2-\left|\nabla\sqrt{|A|^2+\varepsilon} \right|^2 \right) \\
		&+\frac{1}{2\sqrt{|A|^2+\varepsilon}}\left( -|A|^2-2\lambda\textup{tr}A^3-2|A|^4\right)\\
		&\geq\frac{1}{2\sqrt{|A|^2+\varepsilon}}\left(-|A|^2-2|A|^4-2\lambda\textup{tr}A^3 \right). 
	\end{align*}
	Combining this  with \eqref{hhu} yields
	\begin{align*}
		(H+\varepsilon)^2w\mathscr{L}w\geq&-\frac{1}{2}|A|^2-|A|^4-\lambda\textup{tr}A^3-\frac{|A|^2+\varepsilon}{H+\varepsilon}(-|A|^2H-\frac{1}{2}H+\lambda|A|^2)\\
		&-2\langle \nabla w,\nabla H\rangle(H+\varepsilon)w\\
		\geq&\frac{-\lambda(H\textup{tr}A^3+|A|^4)}{H+\varepsilon} -\frac{\varepsilon((1+2\lambda)|A|^2+2|A|^4+2\lambda\textup{tr}A^3)}{2(H+\varepsilon)}\\
		&-2\langle \nabla w,\nabla H\rangle(H+\varepsilon)w.
	\end{align*}
	 Noting that $\Sigma$ is closed, by Theorem \ref{qg2}, $\lambda\geq \sqrt{2n}$. Further,  from \eqref{ineq-c} we deduce that  $\textup{tr}A^3\leq0$ and $|A|\leq H$. Therefore

	\begin{align*}
		(H+\varepsilon)^2w\mathscr{L}w&\geq-\frac{\varepsilon((1+2\lambda)|A|^2+2|A|^4)}{2(H+\varepsilon)}-2\langle \nabla w,\nabla H\rangle(H+\varepsilon)w\\
		&\geq-\varepsilon\left(\frac{(1+2\lambda)}{2}|A|+|A|^3\right)-2\langle \nabla w,\nabla H\rangle(H+\varepsilon)w.
	\end{align*}
	
	Using integration by parts, we obtain
	\begin{align*}
		\int_{\Sigma}(H+\varepsilon)^2|\nabla w|^2e^{\frac{|x|^2}{4}}=&-\int_{\Sigma}(H+\varepsilon)^2w\mathscr{L}(w)e^{\frac{|x|^2}{4}}\\
		&-2\int_{\Sigma}(H+\varepsilon)w\langle \nabla w,\nabla H\rangle e^{\frac{|x|^2}{4}}\\
		\leq&\varepsilon\int_{\Sigma}\left(\frac{(1+2\lambda)}{2}|A|+|A|^3\right)e^{\frac{|x|^2}{4}}.
	\end{align*}
	Therefore 
	\begin{align*}
		\lim\limits_{\varepsilon\rightarrow 0}	\int_{\Sigma}(H+\varepsilon)^2|\nabla w|^2e^{\frac{|x|^2}{4}}=0.
	\end{align*} 
	Let $p\in\mathcal{S}:=\{x\in\Sigma; \mbox{ }H(x)\neq0\}$, and $B^{\Sigma}_r(p)\subset\mathcal{S}$. Then
	\begin{align*}
		\lim\limits_{\varepsilon\rightarrow 0}	\int_{B^{\Sigma}_r(p)}(H+\varepsilon)^2|\nabla w|^2e^{\frac{|x|^2}{4}}=0.
	\end{align*}
	By the dominated convergence theorem, we conclude that $\frac{|A|}{H}$ is constant in $B^{\Sigma}_r(p)$. Since $p$ is
	arbitrary,  $\frac{|A|}{H}$  must be a constant on each connected component of $\mathcal{S}$. By \eqref{e2}, \eqref{e4} and  \eqref{ineq-c}, we conclude that $|\nabla A|=|\nabla |A||$ on $\mathcal{S}$. Now we use an argument  similar to  the one used by Huisken in \cite{huisken54local} (See, e.g. the proof of Theorem 0.17 in \cite{colding}). Since $\Sigma$ is closed, there exists $q\in\mathcal{S}$ such that rank$A=n\geq2$ and then $|\nabla A|=0$ on $\mathcal{D}$, where $\mathcal{D}$ denotes the  connected component  of $\mathcal{S}$ containing $q$.  In particular, $|A|$ and $H$ are positive constants on $\mathcal{D}$. Using a continuity argument, we have $\Sigma\setminus\mathcal{S}=\emptyset$ and thus $\mathcal{D}=\Sigma$. Hence, on $\Sigma$, $|\nabla A|=0$ and $H$ is a positive constant. By  a Lawson's result \cite[Theorem 4]{lawson}, $\Sigma$ must be  a sphere $\mathbb{S}^{n}_r(0)$. 
\end{proof} 

\section{Rigidity of hyperplanes and spheres}\label{le-section4}
In this section, we will prove some rigidity results that characterize the hyperplanes and spheres as $\lambda$-self-expanders.\\

First, we prove Theorem \ref{huiss}, which is, 
\begin{theorem}[Theorem \ref{huiss}]
		Let  $\Sigma$, $n\geq2$, be  a complete   $\lambda$-self-expander immersed in $\mathbb{R}^{n+1}$. If $H-\lambda \leq 0$, $|A|\in L^2(\Sigma,e^{\frac{|x|^2}{4}}d\sigma)$ and 
		\begin{align}\label{g8}
			\lambda(	(H-\lambda)\textup{tr}A^3-\frac{|A|^2}{2})\geq 0,
		\end{align}
		then $\lambda\geq0$ and  $\Sigma$ must be either a hyperplane or  a sphere  $\mathbb{S}^n_r(0)$.
		
	\end{theorem}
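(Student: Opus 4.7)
The strategy follows the pattern established for $\lambda$-hypersurfaces in \cite{cheng2018complete} and \cite{ancari2019volume}, adapted to the $\lambda$-self-expander setting via the Simons-type identities of Theorem \ref{le-simo}. The goal is to combine the equations for $\mathscr{L}H$ and $\mathscr{L}|A|^2$ with the sign hypotheses so that, after weighted integration by parts with a suitable cutoff, the $L^2$ integrability of $|A|$ forces $H$ to be constant on $\Sigma$. Once this rigidity is in hand, Proposition \ref{prop-1} reduces the classification to the list \{hyperplane, $\mathbb{S}^l_r(0)\times\mathbb{R}^{n-l}$\}, and $|A|\in L^2(\Sigma,e^{|x|^2/4}d\sigma)$ excludes all but the two named cases.

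\textbf{Key computation.} Using \eqref{e3}, I multiply $\mathscr{L}|A|^2$ by $(H-\lambda)$ and integrate against the weight, together with the standard cutoff $\eta_R$ ($\eta_R\equiv 1$ on $B_R\cap\Sigma$, $\eta_R\equiv 0$ outside $B_{2R}$, $|\nabla\eta_R|\le C/R$). Integration by parts on the left-hand side yields $-\int \nabla H\cdot\nabla|A|^2\,e^{|x|^2/4}d\sigma$ (plus a cutoff error). Independently, applying integration by parts in the other direction and using \eqref{e2}, this same integral equals $\int[\,|A|^4(H-\lambda)+\tfrac{1}{2}|A|^2H\,]\,e^{|x|^2/4}d\sigma$. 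Equating the two expressions and using the pointwise hypothesis in the form $-2\lambda(H-\lambda)\textup{tr}A^3\le -\lambda|A|^2$ produces an inequality of the shape
\begin{equation*}
\int (H-\lambda)\bigl[\,2|\nabla A|^2-\tfrac{1}{2}|A|^2-|A|^4\,\bigr]\,e^{|x|^2/4}d\sigma \;\le\; 0,
\end{equation*}
together with a $\lambda\int|A|^2$ leftover term whose sign is now controlled by the conclusion $\lambda\ge 0$ we aim to prove. Combining this with the identity obtained from multiplying \eqref{e2} by $(H-\lambda)$ and integrating (which yields $\int|\nabla H|^2 = \int|A|^2(H-\lambda)^2+\tfrac12\int(H-\lambda)^2+\tfrac\lambda2\int(H-\lambda)$) pins down the signs, so that the $H-\lambda\le 0$ assumption makes every surviving term vanish simultaneously, giving $\nabla H\equiv 0$ on $\Sigma$.

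\textbf{Limit argument and classification.} The cutoff errors appearing in the integrations by parts are of the form $\int \eta_R|\nabla\eta_R|\cdot(\cdots)$ where $(\cdots)$ is a product of $|A|$, $|\nabla A|$, and $|H-\lambda|$. Using $|A|\in L^2(\Sigma,e^{|x|^2/4}d\sigma)$ together with the Simons identity \eqref{e3} tested against $\eta_R^2$, one bootstraps weighted control of $|\nabla A|^2$, and hence of $|A|^4$ and of $|A|^2(H-\lambda)^2=|A|^2\langle x,{\bf n}\rangle^2/4$; standard Cauchy--Schwarz then shows these cross terms tend to $0$ as $R\to\infty$. In the limit one concludes $|\nabla H|\equiv 0$, so $H$ is constant. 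By Proposition \ref{prop-1}, $\Sigma$ is either a hyperplane or a generalized cylinder $\mathbb{S}^l_r(0)\times\mathbb{R}^{n-l}$ with $1\le l\le n$. The remark following the theorem statement already observes that the cylinders with $l<n$ violate $|A|\in L^2(\Sigma,e^{|x|^2/4}d\sigma)$, so $\Sigma$ must be a hyperplane or the sphere $\mathbb{S}^n_r(0)$. Finally, the condition $H-\lambda\le 0$ forces $\lambda\ge 0$ in both cases: for a hyperplane $H=0$ gives $\lambda\ge 0$ directly, while $\mathbb{S}^n_r(0)$ automatically has $\lambda=n/r+r/2>0$.

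\textbf{Main obstacle.} The most delicate step is the bootstrap controlling the cutoff error terms: the assumption $|A|\in L^2(\Sigma,e^{|x|^2/4}d\sigma)$ does not a priori yield weighted integrability of $|\nabla A|^2$, of $(H-\lambda)^2$, or of $|A|^4$, and these must be extracted from the Simons identities tested against $\eta_R^2$ before the boundary terms can be shown to vanish. Finding the precise algebraic combination of $\mathscr{L}|A|^2$, $\mathscr{L}H$, and the hypothesis so that all terms of indeterminate sign cancel—leaving only integrals of the form $\int (H-\lambda)\cdot(\text{nonneg}) \le 0$—is also a nontrivial bookkeeping task, since the factor $(H-\lambda)$ is negative and must combine with the hypothesis to eliminate the $\text{tr}A^3$ contribution cleanly.
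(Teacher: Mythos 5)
Your overall strategy --- prove that $H$ is constant by integrating the Simons-type identities against the weight $e^{|x|^2/4}$ and then invoke Proposition \ref{prop-1} --- is not the paper's route, and it contains a gap I do not see how to close. Every one of your global integral identities (e.g. $\int|\nabla H|^2=\int|A|^2(H-\lambda)^2+\tfrac12\int(H-\lambda)^2+\tfrac{\lambda}{2}\int(H-\lambda)$, or the identity equating $\int(H-\lambda)\mathscr{L}|A|^2$ with $-\int\nabla H\cdot\nabla|A|^2$) requires both that the cutoff errors vanish and that the individual integrals be finite. The only integrability hypothesis available is $|A|\in L^2(\Sigma,e^{|x|^2/4}d\sigma)$, and this controls neither $\int(H-\lambda)^2e^{|x|^2/4}$ nor $\int|A|^4e^{|x|^2/4}$ nor $\int|\nabla A|^2e^{|x|^2/4}$. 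Indeed, on a hyperplane --- one of the admissible conclusions of the theorem --- $H-\lambda$ is a nonzero constant and $\int_\Sigma e^{|x|^2/4}d\sigma=\infty$, so the right-hand side of your $|\nabla H|^2$ identity is $+\infty$ while the left-hand side is $0$. The bootstrap you propose to fix this does not work: testing \eqref{e3} against $\eta_R^2$ leaves the uncontrolled term $2\int\eta_R^2|A|^4e^{|x|^2/4}$ on the side you need to bound, and $|A|\in L^2$ gives no handle on it. There is also a circularity in letting the sign of the $\lambda\int|A|^2$ leftover be ``controlled by the conclusion $\lambda\ge0$ we aim to prove''; the paper establishes $\lambda>0$ (in the relevant case) \emph{before} the integral argument, via the maximum principle applied to $\mathscr{L}(H-\lambda)\ge0$ when $\lambda\le 0$, and via the hypothesis \eqref{g8} to rule out zeros of $H-\lambda$ when $\lambda>0$.

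The paper circumvents all of this by studying the quotient $w=\sqrt{|A|^2+\varepsilon}/(H-\lambda)$ on the set where $H-\lambda<0$. The point of the quotient is structural: in the differential inequality for $(H-\lambda)^2w\mathscr{L}w$ the quartic terms $|A|^4$ and the cubic terms $\lambda\,\textup{tr}A^3$ cancel against the hypothesis \eqref{g8}, and the only boundary terms surviving the integration by parts are of the form $\int|\nabla\varphi_k|^2w^2(H-\lambda)^2=\int|\nabla\varphi_k|^2(|A|^2+\varepsilon)$, which is exactly what $|A|\in L^2(\Sigma,e^{|x|^2/4}d\sigma)$ controls. The conclusion of that argument is that $|A|/(H-\lambda)$ is constant --- not that $H$ is constant --- whence $|\nabla A|=|\nabla|A||$, and the classification is completed with Lawson's theorem and the Huisken/Colding--Minicozzi rank dichotomy rather than with Proposition \ref{prop-1}. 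To salvage your approach you would need either additional integrability hypotheses or a choice of test quantity for which only $|A|^2$-weighted integrals survive; the latter is precisely what the quotient accomplishes.
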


\begin{proof}
	For $\lambda\leq0$, from Lemma \ref{le-simo} and  the hypothesis $H-\lambda\leq0$ we have 
	\begin{align}\label{eq-1.4}
		\mathscr{L}(H-\lambda)=-|A|^2(H-\lambda)-\frac{1}{2}H\geq0.
	\end{align}
	Since $H-\lambda\leq0$, by the maximum principle we  have either $H-\lambda\equiv0$ or $H-\lambda< 0$.  If  $H-\lambda\equiv0$,  $\Sigma$ must be a hyperplane. \\
	
	For  $\lambda>0$ and   $H-\lambda=0$ at some point $p\in\Sigma$, from hypothesis \eqref{g8}
	\begin{align*}
		-\frac{ |A|^2}{2}=(H-\lambda)\textup{tr}A^3-\frac{|A|^2}{2}\geq 0 
	\end{align*}    
	at $p\in \Sigma$. This implies that $|A|(p)=0$,  and thus $0=H(p)=\lambda$.  But this contradicts the fact that $\lambda>0.$\\
	
	To complete the proof, we only need to consider the case $H-\lambda<0$. Define the function
	\begin{align*}
		w=\frac{\sqrt{|A|^2+\varepsilon}}{H-\lambda}.
	\end{align*} 
	Using $\eqref{e2}$, we obtain
	\begin{align*}
		\mathscr{L}\sqrt{|A|^2+\varepsilon}=&w\mathscr{L}(H-\lambda)+(H-\lambda)\mathscr{L}w+2\langle \nabla w,\nabla H\rangle\\
		=&\sqrt{|A|^2+\varepsilon}\left[ -\frac{\lambda}{2(H-\lambda)}-\bigg( |A|^2+\frac{1}{2}\bigg) \right] +(H-\lambda)\mathscr{L}w\\
		&+2\langle \nabla w,\nabla H\rangle.
	\end{align*}
	Hence
	\begin{align}\nonumber
		(H-\lambda)^2w\mathscr{L}w=&\sqrt{|A|^2+\varepsilon}\mathscr{L}\sqrt{|A|^2+\varepsilon} -2\langle \nabla w,\nabla H\rangle w(H-\lambda)\\\label{g2}
		&+(|A|^2+\varepsilon)\left( \frac{\lambda}{2(H-\lambda)}+|A|^2+\frac{1}{2}\right).
	\end{align}
	On the other hand, \eqref{e3} yields
	\begin{align}\nonumber
		\sqrt{|A|^2+\varepsilon}\mathscr{L}\sqrt{|A|^2+\varepsilon}=&\frac{1}{2}\left( \mathscr{L}|A|^2-2\left| \nabla\sqrt{|A|^2+\varepsilon}\right| ^2\right) \\\nonumber
		=& |\nabla A|^2-\left| \nabla\sqrt{|A|^2+\varepsilon}\right| ^2\\\nonumber
		&-\frac{1}{2}\left( |A|^2+2\lambda\textup{tr}A^3+2|A|^4\right).
	\end{align}
	Using the inequality $|\nabla A|^2-\left| \nabla\sqrt{|A|^2+\varepsilon}\right| ^2\geq 0$,  we obtain
	\begin{align}\label{g3}
		\sqrt{|A|^2+\varepsilon}\mathscr{L}\sqrt{|A|^2+\varepsilon}\geq  -\frac{|A|^2}{2}-\lambda\textup{tr}A^3-|A|^4.
	\end{align}
	Combining  \eqref{g2} and  \eqref{g3} it follows that 
	\begin{align}\nonumber
		(H-\lambda)^2w\mathscr{L}w\geq&\frac{\lambda}{\lambda-H}\left((H-\lambda)\textup{tr}A^3-\frac{|A|^2}{2}\right)+\varepsilon|A|^2+\frac{\varepsilon H}{2(H-\lambda)}\\\label{g4}
		&-2\langle \nabla w,\nabla H\rangle w(H-\lambda).
	\end{align}
	Since $H-\lambda < 0$ and $\lambda\left((H-\lambda)\textup{tr}A^3-\frac{|A|^2}{2}\right)\geq 0$, \eqref{g4} yields 
	\begin{align}\label{g5}
		(H-\lambda)^2w\mathscr{L}w\geq\frac{\varepsilon H}{2(H-\lambda)}-2\langle \nabla w,\nabla H\rangle w (H-\lambda).
	\end{align}
	For a nonnegative function $\varphi\in C^{\infty}_0(\Sigma)$, using integration by part and \eqref{g5}, we get 
	\begin{align*}
		\int_{\Sigma}\varphi^2(H-\lambda)^2|\nabla w|^2e^{\frac{|x|^2}{4}}=&- \int_{\Sigma}\varphi^2(H-\lambda)^2w\mathscr{L}we^{\frac{|x|^2}{4}}\\
		& -2\int_{\Sigma}\varphi^2\left\langle\nabla w, \nabla H\right\rangle w(H-\lambda) e^{\frac{|x|^2}{4}}\\
		& -2\int_{\Sigma}\left\langle\varphi(H-\lambda)\nabla w, w(H-\lambda)\nabla \varphi\right\rangle e^{\frac{|x|^2}{4}}\\
		\leq& \int_{\Sigma}\frac{\varepsilon H\varphi^2}{2(\lambda-H)}e^{\frac{|x|^2}{4}}+\frac{1}{2}\int_{\Sigma}\varphi^2(H-\lambda)^2|\nabla w|^2e^{\frac{|x|^2}{4}}\\
		&+2\int_{\Sigma}w^2(H-\lambda)^2|\nabla \varphi|^2e^{\frac{|x|^2}{4}}.
	\end{align*}
	Therefore, 
	\begin{align}\label{g7}
		\int_{\Sigma}\varphi^2(H-\lambda)^2|\nabla w|^2e^{\frac{|x|^2}{4}}\leq&\int_{\Sigma}\frac{\varepsilon H\varphi^2}{\lambda-H}e^{\frac{|x|^2}{4}}+4\int_{\Sigma}|\nabla \varphi|^2|A|^2e^{\frac{|x|^2}{4}}\\\nonumber
		&+4\varepsilon\int_{\Sigma}|\nabla \varphi|^2e^{\frac{|x|^2}{4}}.
	\end{align}

	Now, for the following set $$\mathcal{S}=\{p\in\Sigma;|A|(p)=0\},$$
	we consider two  cases separately:\\
	
	\textbf{Case} $({\mathcal{S}\neq\emptyset})$.  Since  $H-\lambda<0$ and $\mathcal{S}\neq\emptyset$,  we get $\lambda>0$. First, we claim that $\Sigma $ is noncompact. In fact, if $\Sigma$ is compact, then  taking  $\varphi=1$ in \eqref{g7}, we deduce that
	\begin{align}\nonumber
		\lim_{\varepsilon\rightarrow 0}	\int_{\Sigma}(H-\lambda)^2|\nabla w|^2e^{\frac{|x|^2}{4}}=0.
	\end{align}
	Suppose that $\mathcal{D}:=\Sigma\setminus \mathcal{S}$ is not empty. Let  $q\in\mathcal{D}$ and $B_r^{\Sigma}(q)\subset\mathcal{D}$. Then
	\begin{align}\nonumber
		\lim_{\varepsilon\rightarrow 0}	\int_{B_r^{\Sigma}(q)}(H-\lambda)^2|\nabla w|^2e^{\frac{|x|^2}{4}}=0.
	\end{align}
	By the dominated convergence theorem, we conclude that $\frac{|A|}{H-\lambda}$ is constant in $B^{\Sigma}_r(q)$.  Since $q$ is
	arbitrary, it is possible to conclude that $\frac{|A|}{H-\lambda}$ is constant on each connected component of $\mathcal{D}$. Since $\mathcal{S}\neq\emptyset$, using a continuity argument, we conclude that $|A|\equiv0$, but this contradicts the assumption that $\mathcal{D}\neq \emptyset$. Hence $\mathcal{S}=\Sigma$. In particular $\Sigma $ must be minimal, which  is a contradiction. Therefore, we have confirmed the claim.

		Now we assume that $\Sigma$ is noncompact. For a fixed point $p\in \Sigma$, choose $\varphi=\varphi_k$, where $\varphi_k$ are nonnegative cut-off functions, such that $\varphi_k=1$ on $B^{\Sigma}_k(p)$, $\varphi_k=0$ on  $\Sigma \setminus B^{\Sigma}_{2k}(p)$ and $|\nabla \varphi_k|\leq \frac{1}{k}$ for every $k$. Substitute $\varphi$ in \eqref{g7}: 
	\begin{align}\nonumber
		\int_{\Sigma}\varphi^2_k(H-\lambda)^2|\nabla w|^2e^{\frac{|x|^2}{4}}\leq& \varepsilon\int_{\Sigma}\frac{ H\varphi_k^2}{ \lambda-H}e^{\frac{|x|^2}{4}}+4\int_{\Sigma}|\nabla \varphi_k|^2|A|^2e^{\frac{|x|^2}{4}}\\\nonumber
		&+4\varepsilon\int_{\Sigma}|\nabla \varphi_k|^2e^{\frac{|x|^2}{4}}\\\nonumber
		\leq& \varepsilon\int_{B^{\Sigma}_{2k}(p)}\frac{ |H|}{\lambda-H}e^{\frac{|x|^2}{4}}\\\nonumber
		&+\frac{4}{k^2}\int_{B^{\Sigma}_{2k}(p)\setminus B^{\Sigma}_k(p)}|A|^2e^{\frac{|x|^2}{4}}
		+\frac{4\varepsilon}{k^2}\int_{B^{\Sigma}_{2k}(p)}e^{\frac{|x|^2}{4}}.
	\end{align}
	Choosing $\varepsilon=\frac{1}{k}\left( \int_{B^{\Sigma}_{2k}(p)}e^{\frac{|x|^2}{4}}+\int_{B^{\Sigma}_{2k}(p)}\frac{ |H|}{\lambda-H}e^{\frac{|x|^2}{4}}\right)^{-1} $,  we have 
	\begin{align*}
		\int_{\Sigma}\varphi^2_k(H-\lambda)^2|\nabla w_k|^2e^{\frac{|x|^2}{4}}\leq& \frac{1}{k}+\frac{4}{k^2}\int_{B^{\Sigma}_{2k}(p)\setminus B^{\Sigma}_k(p)}|A|^2e^{\frac{|x|^2}{4}}+\frac{4}{k^3}.
	\end{align*}
In the above, $w_k$ denotes $w$ in which $\varepsilon$ has been substituted.

	Since $|A|\in L^2(\Sigma,e^{\frac{|x|^2}{4}}d\sigma)$, it follows that 
	\begin{align*}
		\lim_{k\rightarrow \infty}\int_{\Sigma}\varphi^2_k(H-\lambda)^2|\nabla w_k|^2e^{\frac{|x|^2}{4}}=0.
	\end{align*}
	Given that,  $\mathcal{S}\neq\emptyset$, consider $\mathcal{D}=\Sigma\setminus \mathcal{S}$. Since $\mathcal{D}$ is an open set, let $q\in\mathcal{D}$ and $B^{\Sigma}_r(q)\subset \mathcal{D}$. For $k$ sufficiently large, $B^{\Sigma}_r(q)\subset \textup{supp}\varphi_k$ and $\varphi_k=1$ on $B^{\Sigma}_r(q)$. Hence 
	\begin{align*}
		\lim_{k\rightarrow \infty}\int_{B^{\Sigma}_r(q)}(H-\lambda)^2|\nabla w_k|^2e^{\frac{|x|^2}{4}}=0.
	\end{align*}
	By the dominated convergence theorem, we conclude that $\frac{|A|}{H-\lambda}$ is constant on $B^{\Sigma}_r(q)$. Since $q$ is arbitrary, it is possible to conclude that $\frac{|A|}{H-\lambda}$ is constant on $\mathcal{D}$. Since $\mathcal{S}\neq\emptyset$, using a continuity argument, we conclude that $|A|=0$  on $\Sigma$. We conclude that $\Sigma$ is a hyperplane.\\
	
	\textbf{Case} ($\mathcal{S}= \emptyset$). Choosing $\varepsilon =0$ in \eqref{g7}, we obtain 
	\begin{align}\label{le1}
		\int_{\Sigma}\varphi^2(H-\lambda)^2|\nabla w|^2e^{\frac{|x|^2}{4}}\leq 4\int_{\Sigma}|\nabla \varphi|^2|A|^2e^{\frac{|x|^2}{4}}.
	\end{align} 
	
	In the case $\Sigma$ is compact, choose $\varphi=1$. Otherwise, fixing a point $p\in \Sigma$, consider $\varphi=\varphi_k$, where $\varphi_k$ are nonnegative cut-off function, such that $\varphi_k=1$ on $B^{\Sigma}_k(p)$, $\varphi_k=0$ on  $\Sigma \setminus B^{\Sigma}_{2k}(p)$ and $|\nabla \varphi_k|\leq \frac{1}{k}$, then

	\begin{align}\nonumber
		\int_{\Sigma}\varphi^2_k(H-\lambda)^2|\nabla w|^2e^{\frac{|x|^2}{4}}&\leq 4\int_{\Sigma}|\nabla \varphi_k|^2|A|^2e^{\frac{|x|^2}{4}}\\\label{g6}
		&\leq \frac{4}{k^2}\int_{B^{\Sigma}_{2k}(p)\setminus B^{\Sigma}_k(p)}|A|^2e^{\frac{|x|^2}{4}}.
	\end{align}
	Since $|A|\in L^{2}(\Sigma,e^{\frac{|x|^2}{4}}d\sigma)$, letting $k\rightarrow \infty$ in \eqref{g6} and using the dominated convergence theorem, we get  
	\begin{align*}
		\int_{\Sigma}(H-\lambda)^2\left|\nabla \frac{|A|}{H-\lambda}\right|^2e^{\frac{|x|^2}{4}}=0. 
	\end{align*}
	This implies that $|A|=C(H-\lambda)$, for a constant $C<0.$  So from  \eqref{e2}, it follows that   
	\begin{align}\label{g9}
		\mathscr{L}|A|+\left(|A|^2+\frac{1}{2} \right)|A|=-\frac{\lambda |A|}{2(H-\lambda)}. 
	\end{align} 
	Combining  \eqref{e4} and \eqref{g9}, we obtain  
	\begin{align*}
		\frac{|\nabla A|^2-|\nabla|A||^2}{|A|}=\frac{\lambda}{(H-\lambda)}\left( (H-\lambda)\textup{tr}A^3-\frac{|A|^2}{2}\right). 
	\end{align*}
		Further,  by hypothesis $\lambda \left((H-\lambda)\textup{tr}A^3-\frac{|A|^2}{2}\right)\geq 0 $, and  $H-\lambda<0$. Therefore  
		$$|\nabla A|=|\nabla |A||.$$
		Now again, as in the proof of Theorem \ref{dxs}, we use an argument similar to  the Huisken's in \cite{huisken54local} ( See, e.g. the proof of Theorem 0.17 in \cite{colding}).  Following the proof in \cite{colding}, $|\nabla A|=|\nabla |A||$ implies two possible sub-cases: (I) If the rank of $A$ is greater than 2,  $\nabla A =0$.  By {Theorem 4}  in \cite{lawson}, it follows that $\Sigma$ is either $\mathbb{S}^n_r(0)$ or $\mathbb{S}^l_r(0)\times\mathbb{R}^{n-l}$, $2\leq l\leq n-1$. Since  $\mathbb{S}^l_r(0)\times\mathbb{R}^{n-l}$, $2\leq l\leq n-1$, do not satisfy the hypothesis that $|A|\in L^2(\Sigma,e^{\frac{|x|^2}{4}}d\sigma)$, we  conclude that $\Sigma$ must be  $\mathbb{S}^n_r(0)$. (II) If  the rank of $A$ is 1, then $\Sigma$ is the product of a curve $\Gamma\subset\mathbb{R}^2$ and a $(n-1)$-dimensional hyperplane, but this  contradicts the fact that  $|A|\in L^2(\Sigma,e^{\frac{|x|^2}{4}}d\sigma)$. 
	
\end{proof}

Now we  prove Theorem \ref{le-tu}.
\begin{theorem}[Theorem \ref{le-tu}]
		Let $\Sigma$ be a complete properly immersed $\lambda$-self-expander   in $\mathbb{R}^{n+1}$. If there exists $\alpha>0$ such that
		\begin{align}\label{le-se1}
			|A|^2(H-\lambda)H+\frac{1}{2}H^2\left( 1+\frac{(\alpha+1)^2}{8}|x^{\top}|^2 \right)\leq 0,
		\end{align}
		then $\Sigma$ must be either a hyperplane or  $\mathbb{S}^n_r(0)$.
	\end{theorem}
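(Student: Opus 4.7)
The plan is to use equation \eqref{au1} to put $H^2$ in the form of a subsolution for $\mathcal{L}_\alpha$ and close the argument by a weighted cutoff. Rewriting \eqref{au1} as
$$\mathcal{L}_\alpha H^2 + 2\Bigl[|A|^2(H-\lambda)H + \tfrac{1}{2}H^2\bigl(1+\tfrac{(\alpha+1)^2}{8}|x^\top|^2\bigr)\Bigr] = 2\Bigl|\nabla H - \tfrac{\alpha+1}{4}Hx^\top\Bigr|^2,$$
the hypothesis \eqref{le-se1} makes the bracket nonpositive and yields $\mathcal{L}_\alpha H^2 \geq 2|\nabla H-\tfrac{\alpha+1}{4}Hx^\top|^2 \geq 0$. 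Moreover, \eqref{le-se1} forces $|A|^2(H-\lambda)H \leq -\tfrac{1}{2}H^2 \leq 0$, so at any point with $H\neq 0$ one has $(H-\lambda)H\leq 0$; hence $H$ is trapped between $0$ and $\lambda$, in particular it is bounded.

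The boundedness of $H$ is equivalent to the boundedness of the Gaussian weighted mean curvature $2H-\lambda$ of $\Sigma$, so by the remark after Theorem \ref{le-s6} (invoking Cheng--Vieira--Zhou) the proper immersion hypothesis forces polynomial extrinsic area growth for $\Sigma$. With this in hand I would take a cutoff $\varphi_k(x)=\eta(|x|/k)$ with $\eta\in C^\infty_c([0,2))$ equal to $1$ on $[0,1]$, so that $|\nabla\varphi_k|\leq C/k$ on $\Sigma$ and $\mathrm{supp}\,|\nabla\varphi_k|\subset\{k\leq|x|\leq 2k\}$. Multiplying $\mathcal{L}_\alpha H^2 \geq 2|\nabla H-\tfrac{\alpha+1}{4}Hx^\top|^2$ by $\varphi_k^2 e^{-\alpha|x|^2/4}$ and integrating, the self-adjointness of $\mathcal{L}_\alpha$ with respect to $e^{-\alpha|x|^2/4}\,d\sigma$ gives
$$2\int_\Sigma\varphi_k^2\Bigl|\nabla H-\tfrac{\alpha+1}{4}Hx^\top\Bigr|^2 e^{-\alpha|x|^2/4}d\sigma \leq -4\int_\Sigma\varphi_k H\langle\nabla\varphi_k,\nabla H\rangle e^{-\alpha|x|^2/4}d\sigma.$$
After splitting $\nabla H=(\nabla H-\tfrac{\alpha+1}{4}Hx^\top)+\tfrac{\alpha+1}{4}Hx^\top$ and using Cauchy--Schwarz to absorb the "good" piece into the LHS, the residual error terms are controlled by $\sup H^2$ times integrals on $\{k\leq|x|\leq 2k\}$ weighted by $e^{-\alpha|x|^2/4}$ (using $|x^\top|\leq|x|\leq 2k$ there); the Gaussian factor $e^{-\alpha k^2/4}$ against polynomial area growth forces these errors to zero as $k\to\infty$. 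I conclude $\nabla H=\tfrac{\alpha+1}{4}Hx^\top$ on all of $\Sigma$.

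This identity makes $\phi:=H^2 e^{-(\alpha+1)|x|^2/4}$ have vanishing gradient, so $H^2 = C_0 e^{(\alpha+1)|x|^2/4}$ on $\Sigma$ for some $C_0\geq 0$. If $C_0=0$ then $H\equiv 0$, and equation \eqref{e2} reduces to $0=\lambda|A|^2$: when $\lambda\neq 0$ this gives $|A|\equiv 0$, so $\Sigma$ is a hyperplane, while when $\lambda=0$ the $\lambda$-self-expander equation yields $\langle x,\mathbf n\rangle\equiv 0$, making $\Sigma$ a smooth complete minimal cone through the origin in $\mathbb R^{n+1}$, hence again a hyperplane. If $C_0>0$, then $H$ is nowhere zero and of constant sign; the boundedness of $|H|$ combined with $H^2=C_0 e^{(\alpha+1)|x|^2/4}$ forces $|x|$ bounded, and properness yields that $\Sigma$ is compact. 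At the extrema $p,q$ of $|x|^2$ on $\Sigma$ one has $x^\top=0$, so $x=\epsilon|x|\mathbf n$ and $H=\lambda-\epsilon|x|/2$; the constant sign of $H$ together with $H\in[\min(0,\lambda),\max(0,\lambda)]$ selects the same $\epsilon\in\{+1,-1\}$ at both points, and then comparing $H(p)^2=C_0 e^{(\alpha+1)|x|(p)^2/4}$ with $H(p)=\lambda-\epsilon|x|(p)/2$ (and similarly at $q$) shows that $|x|(p)>|x|(q)$ would force $|H(p)|$ both greater and less than $|H(q)|$, a contradiction. Hence $|x|$ is constant on $\Sigma$ and $\Sigma=\mathbb{S}^n_r(0)$. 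The main obstacle I anticipate is the weighted cutoff step: although the subharmonicity $\mathcal{L}_\alpha H^2\geq 0$ is immediate from the hypothesis, controlling the cross term $(\alpha+1)\varphi_k H^2\langle\nabla\varphi_k,x^\top\rangle$ that appears in the integration by parts requires the tight interplay between the Gaussian decay of $e^{-\alpha|x|^2/4}$ on the annulus $\{k\leq|x|\leq 2k\}$ and the Cheng--Vieira--Zhou polynomial area growth produced by the boundedness of $H$.
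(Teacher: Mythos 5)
Your proof is correct and runs on the same engine as the paper's: equation \eqref{au1} together with hypothesis \eqref{le-se1} turns $H^2$ into a subsolution of $\mathcal{L}_\alpha$, the consequence $(H-\lambda)H\le 0$ makes $H$ bounded, and a cutoff argument against the measure $e^{-\frac{\alpha}{4}|x|^2}d\sigma$ kills the gradient terms. The genuine difference is the choice of multiplier and the resulting endgame. The paper tests the inequality against $\varphi^2H^2$ rather than $\varphi^2$; the integration by parts then yields \emph{both} $\int\varphi^2H^2\left|\nabla H-\frac{\alpha+1}{4}Hx^{\top}\right|^2$ and $\frac14\int\varphi^2|\nabla H^2|^2$ controlled by $\int H^4|\nabla\varphi|^2e^{-\frac{\alpha}{4}|x|^2}$, and the finiteness of $\int H^4e^{-\frac{\alpha}{4}|x|^2}$ (Corollary \ref{le-spn10}, which is exactly the weighted-area finiteness you re-derive from the Cheng--Vieira--Zhou polynomial area growth plus Gaussian decay on annuli) gives at once that $H$ is constant and $H^4|x^{\top}|^2=0$, whence hyperplane or sphere in one line. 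Testing against $\varphi_k^2$ only, you obtain the weaker identity $\nabla H=\frac{\alpha+1}{4}Hx^{\top}$, and must then integrate it to $H^2=C_0e^{\frac{\alpha+1}{4}|x|^2}$ and run a compactness-plus-extremum comparison to recover the sphere. This does close (including the degenerate possibility that $|x|$ attains its minimum at the origin, where $H=\lambda$ and your monotonicity comparison still gives a contradiction), and your $\lambda=0$, $H\equiv0$ subcase rests on the standard fact that a smooth complete minimal cone is a hyperplane --- the same fact the paper invokes without comment. In short: same method, a less efficient multiplier, and a correspondingly heavier but valid endgame; what the paper's choice of $\varphi^2H^2$ buys is the immediate constancy of $H$.
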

\begin{proof}
	
	Let $\varphi\in C^{\infty}_0(\Sigma)$, from \eqref{au1} and hypothesis \eqref{le-se1}, we have
	\begin{align*}
		2\int_{\Sigma}\varphi^2H^{2}\left|\nabla H-\frac{\alpha+1}{4}x^{\top}H\right|^2e^{-\frac{\alpha}{4}|x|^2}
		\leq\int_{\Sigma}\varphi^2H^{2}\mathcal{L}_\alpha H^2e^{-\frac{\alpha}{4}|x|^2}.
	\end{align*}
	Further,
	\begin{align*}
		\int_{\Sigma}\varphi^2H^{2}\mathcal{L}_\alpha H^2e^{-\frac{\alpha}{4}|x|^2}=&-2\int_{\Sigma}\varphi H^{2}\langle \nabla \varphi,\nabla H^2\rangle e^{-\frac{\alpha}{4}|x|^2}\\
		&- \int_{\Sigma}\varphi^2|\nabla H^2|^2e^{-\frac{\alpha}{4}|x|^2}\\
		\leq&-\frac{1}{2}\int_{\Sigma}\varphi^2 |\nabla H^2|^2e^{-\frac{\alpha}{4}|x|^2}
		+2\int_{\Sigma}H^{4}|\nabla\varphi|^2e^{-\frac{\alpha}{4}|x|^2}.
	\end{align*}
	Therefore
	\begin{align}\label{lse2}
	\int_{\Sigma}H^{4}|\nabla\varphi|^2e^{-\frac{\alpha}{4}|x|^2}\geq&\int_{\Sigma}\varphi^2H^{2}\left|\nabla H-\frac{\alpha+1}{4}x^{\top}H\right|^2e^{-\frac{\alpha}{4}|x|^2}\\\nonumber
	&+\frac{1}{4}\int_{\Sigma}\varphi^2 |\nabla H^2|^2e^{-\frac{\alpha}{4}|x|^2}.
	\end{align}
	On the other hand, from \eqref{le-se1} it follows that $(H-\lambda)H\leq0$. In particular  $|H|\leq|\lambda|.$ Thus, by Corollary \ref{le-spn10}, which will be proved later in this paper , we  have that $\int_{\Sigma}H^4e^{-\frac{\alpha}{4}|x|^2}<\infty.$ Choose  $\varphi=\varphi_j$ in \eqref{lse2}, where  $\varphi_j$ are nonnegative cut-off functions satisfying that  $\varphi_j = 1$ on $B_j(0)$, $\varphi_j = 0$ on $\Sigma\setminus B_{j+1}(0)$ and $|\nabla \varphi_j|\leq 1$. Using  the monotone convergence theorem, it follows that
	$$H^{2}\left|\nabla H-\frac{\alpha+1}{4}x^{\top}H\right|^2=0 \mbox{ and } |\nabla H^2|^2=0.$$
	This implies that $H$ is constant and $H^{4}\left|x^{\top}\right|^2=0$. If $H\equiv0$, then  $\Sigma$ is  a hyperplane. If $H\neq 0$, then  $ |x^{\top}|=0$, $|x|^2=|\langle x,{\bf n}\rangle|^2=4(\lambda-H)^2$ is constant. Thus $\Sigma$ is  a sphere. 
\end{proof}
 \section{Rigidity of generalized cylinders}\label{le-section5}
 In this section, using the maximum principle we will prove  Theorem \ref{smoczyk}  that   characterizes the  generalized cylinders as $\lambda$-self-expander hypersurfaces.

 \begin{theorem}[Theorem \ref{smoczyk}]
 Let $\Sigma$ be a complete immersed  $\lambda$-self-expander in $\mathbb{R}^{n+1}$ with $\lambda\neq0$. Then
\begin{itemize}
\item   [\textup{(i)}] the open set  $\{x\in\Sigma; H\neq \lambda\}$  must be non-empty. 

 \item [\textup{(ii)}] Further,  if the following conditions   are satisfied  on the non-empty set $\{x\in\Sigma; H\neq \lambda\}$,
 \begin{itemize}
     \item [\textup{(a)}] $\frac{\lambda}{H-\lambda}\left((H-\lambda)\textup{tr}A^3-\frac{|A|^2}{2}\right)\leq0,$
     \item  [\textup{(b)}] the function $\frac{|A|^2}{(H-\lambda)^2}$ attains a local maximum,
 \end{itemize}
then $\Sigma$ is either a hyperplane or a cylinder $\mathbb{S}_r^{l}(0)\times\mathbb{R}^{n-l}$, $1\leq l\leq n$.
\end{itemize}
\end{theorem}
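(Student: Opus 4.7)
For part (i), I would argue by contradiction: suppose $H \equiv \lambda$ throughout $\Sigma$. Then $\mathscr{L}H \equiv 0$, while the Simons-type identity \eqref{e2} under this assumption reduces to $\mathscr{L}H = -H/2 = -\lambda/2$. Thus $\lambda = 0$, contradicting the standing hypothesis $\lambda \neq 0$; hence $\{x\in\Sigma : H(x) \neq \lambda\}$ is nonempty.

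For part (ii), let $U := \{x \in \Sigma : H(x) \neq \lambda\}$ (open and nonempty by (i)) and set $f := |A|^2/(H-\lambda)^2$ on $U$. The plan is to derive the differential inequality
\begin{equation*}
Lf := \mathscr{L}f + \frac{2\,\langle \nabla H, \nabla f\rangle}{H-\lambda} \;\geq\; \frac{|\nabla f|^2}{2f} \;\geq\; 0
\end{equation*}
on $U \cap \{|A| > 0\}$, and then to invoke the strong maximum principle at the local maximum $p_0$ of $f$. To derive this inequality I would start from the identity $\mathscr{L}(u/v) = (\mathscr{L}u - f\,\mathscr{L}v)/v - 2\langle \nabla f, \nabla v\rangle/v$ with $u := |A|^2$, $v := (H-\lambda)^2$, compute $\mathscr{L}u$ from \eqref{e3} and $\mathscr{L}v$ from \eqref{e2}, exploit hypothesis (a) so that the $\textup{tr}A^3$-contribution enters with a non-negative sign, and apply Kato's inequality $|\nabla A|^2 \geq |\nabla |A||^2$ after expanding $|\nabla |A|^2|^2 = |v\nabla f + f\nabla v|^2$ to absorb all occurrences of $|\nabla H|^2$ into a clean $|\nabla f|^2/(2f)$ remainder.

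The strong maximum principle for the elliptic operator $L$ applied at $p_0$ then splits into two cases. If $f(p_0) = 0$ then $|A|(p_0) = 0$, and since $f \geq 0$ the point is also a local minimum; so $f \equiv 0$ on a neighborhood, and by real-analyticity of solutions to the $\lambda$-self-expander equation, $|A| \equiv 0$ on $\Sigma$, giving a hyperplane. If $f(p_0) > 0$, then $|A| > 0$ near $p_0$ and $L$ has smooth coefficients; the strong maximum principle yields $f \equiv f(p_0)$ in a neighborhood, which propagates to the connected component of $\Sigma$ containing $p_0$ by analyticity. With $f$ constant, equality in Kato forces $\nabla_k A = \mu_k A$ for some scalars $\mu_k$; tracing yields $\nabla H = (H/|A|)\nabla|A|$, and combining with the identity $\nabla |A|^2 = 2f(H-\lambda)\nabla H$ together with $\lambda \neq 0$ and $H \neq \lambda$ forces $\nabla H \equiv 0$. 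Hence $H$ is constant on $\Sigma$, and Proposition \ref{prop-1} concludes that $\Sigma$ is either a hyperplane or a cylinder $\mathbb{S}_r^l(0) \times \mathbb{R}^{n-l}$, $1 \leq l \leq n$.

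I expect the main technical obstacle to be the algebraic bookkeeping in producing the clean right-hand side $|\nabla f|^2/(2f)$: the Kato expansion of $|\nabla |A|^2|^2$, the cross-term $2\langle\nabla f, \nabla v\rangle/v$, and the sign contributed by condition (a) must combine precisely to cancel both the $|\nabla H|^2$ contributions and the $\textup{tr}A^3$-terms. A secondary concern is the propagation step, namely extending ``$f$ locally constant at $p_0$'' to ``$f$ constant on the whole connected $\Sigma$'', and verifying that the level set $\{H = \lambda\}$ cannot separate the component $U_0$ of $p_0$ from the rest of $\Sigma$.
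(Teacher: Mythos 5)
Your argument for (i) is exactly the paper's: plug $H\equiv\lambda$ into \eqref{e2} to force $\lambda=0$. For (ii) your main line coincides with the paper's as well — compute a drifted-Laplacian inequality for $f=|A|^2/(H-\lambda)^2$ using \eqref{e2}, \eqref{e3}/\eqref{ee5} and hypothesis (a), and invoke the strong maximum principle to get $f\equiv c$ — the only cosmetic difference being that you run the scalar quotient rule plus Kato where the paper differentiates the tensor $A/(H-\lambda)$ and uses $2\left|\nabla\tfrac{A}{H-\lambda}\right|^2\geq 0$ (your sharper remainder $|\nabla f|^2/(2f)$ is obtainable but not needed). Where you genuinely diverge is the endgame: the paper passes from $f\equiv c$ to $|\nabla A|=|\nabla|A||$ via \eqref{e2} and \eqref{e4}, then runs the Huisken/Colding--Minicozzi rank dichotomy and Lawson's theorem locally before invoking unique continuation; you instead extract $\nabla_kA=\mu_kA$ from the vanishing of the nonnegative remainder, trace it to get $\nabla H=\tfrac{H}{H-\lambda}\nabla H$, hence $\tfrac{-\lambda}{H-\lambda}\nabla H=0$ and $\nabla H\equiv 0$ since $\lambda\neq 0$, and then quote Proposition \ref{prop-1}. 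This is a legitimate and in fact shorter route (Proposition \ref{prop-1} already packages the Lawson argument), and it uses $\lambda\neq0$ exactly where the paper uses it implicitly. Two small points to nail down: the constancy of $f$ from the maximum principle is a priori only on the connected neighborhood of the local maximum, so the passage to a global conclusion must go through analyticity/unique continuation exactly as in the paper (your stated worry about $\{H=\lambda\}$ separating components is resolved this way, since a piece of $\Sigma$ lying in a fixed hyperplane or cylinder forces all of $\Sigma$ into it); and in the case $c>0$ you should derive $\nabla H=0$ from the traced identity $\nabla H=\tfrac{H}{H-\lambda}\nabla H$ rather than from the pair $\nabla H=(H/|A|)\nabla|A|$, $|A|=\sqrt{c}\,|H-\lambda|$, which if handled with the wrong sign only rules out non-constancy off the set $\{H=\lambda/2\}$.
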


\begin{proof}
First we confirm $\textup{(i)}$.  In fact, by
contradiction, assume that  $H-\lambda\equiv 0$  on $\Sigma$. From  \eqref{e2} we conclude that $\lambda=0$, a contradiction. Therefore, the set   $\{x\in\Sigma; H\neq \lambda\}$ is non-empty.

Now we prove  $\textup{(ii)}$. By  hypothesis that the function $\frac{|A|^2}{(H-\lambda)^2}$ attains a local maximum   on the set $\mathcal{D}=\{x\in\Sigma; H\neq \lambda\}$, there is a point $p\in \mathcal{D}$  and a
connected open neighborhood $U\subset\mathcal{D}$  of $p$ such that $\frac{|A|^2}{(H-\lambda)^2}(p)=\max\limits_{U}\frac{|A|^2}{(H-\lambda)^2}$.
Computing $\mathscr{L}\frac{|A|^2}{(H-\lambda)^2}$ on $U$ we get 

\begin{align}\label{l-s-e3}
    \mathscr{L}\frac{|A|^2}{(H-\lambda)^2}=2\left\langle\mathscr{L}\frac{A}{H-\lambda},\frac{A}{H-\lambda}\right\rangle+2\left|\nabla\frac{A}{H-\lambda}\right|^2.
\end{align}
Further 
\begin{align}\label{l-s-e1}
    \mathscr{L}\frac{A}{H-\lambda}=\frac{1}{H-\lambda}\mathscr{L}A+A\mathscr{L}\frac{1}{H-\lambda}-\frac{2}{(H-\lambda)^2}\nabla_{\nabla (H-\lambda)}A,
\end{align}
and
\begin{align}\label{l-s-e2}
    \mathscr{L}\frac{1}{H-\lambda}=-\frac{\mathscr{L}(H-\lambda)}{(H-\lambda)^2}+2\frac{|\nabla (H-\lambda)|^2}{(H-\lambda)^3}.
\end{align}
Substituting \eqref{l-s-e1} and \eqref{l-s-e2} into \eqref{l-s-e3}, we obtain
\begin{align}\nonumber
    &\mathscr{L}\frac{|A|^2}{(H-\lambda)^2}\\\nonumber
    =&\frac{2}{(H-\lambda)^2}\langle\mathscr{L}A,A\rangle+\frac{2|A|^2}{H-\lambda}\left(-\frac{\mathscr{L}(H-\lambda)}{(H-\lambda)^2}+2\frac{|\nabla (H-\lambda)|^2}{(H-\lambda)^3}\right)\\\nonumber
    &-\frac{4}{(H-\lambda)^3}\langle\nabla_{\nabla (H-\lambda)}A,A\rangle+2\left|\nabla\frac{A}{H-\lambda}\right|^2\\\nonumber
    =&\frac{2}{(H-\lambda)^2}\langle\mathscr{L}A,A\rangle-\frac{2|A|^2}{(H-\lambda)^3}\mathscr{L}(H-\lambda)+\frac{4|A|^2}{(H-\lambda)^4}|\nabla (H-\lambda)|^2\\\nonumber
    &-\frac{2}{(H-\lambda)^3}\langle \nabla |A|^2,\nabla (H-\lambda)\rangle+2\left|\nabla\frac{A}{H-\lambda}\right|^2\\\nonumber
    =&\frac{2}{(H-\lambda)^2}\langle\mathscr{L}A,A\rangle-\frac{2|A|^2}{(H-\lambda)^3}\mathscr{L}(H-\lambda)\\\label{smcz-1}
    &-\left\langle \nabla \frac{|A|^2}{(H-\lambda)^2},\nabla(\log (H-\lambda)^2)\right\rangle
    +2\left|\nabla\frac{A}{H-\lambda}\right|^2.\\\nonumber
\end{align}
 From \eqref{smcz-1},  \eqref{e2} and \eqref{ee5}  it follows that 
    \begin{align}\nonumber
        \mathscr{L}\left(\frac{|A|^2}{(H-\lambda)^2}\right)=&-\left\langle\nabla\frac{|A|^2}{(H-\lambda)^2},\nabla \log(H-\lambda)^2\right\rangle+2\left|\nabla \frac{A}{H-\lambda}\right|^2\\\nonumber
        &-\frac{2\lambda}{(H-\lambda)^3}\left((H-\lambda)\textup{tr}A^3-\frac{|A|^2}{2}\right).
     \end{align}
Combining this with  the hypothesis $\textup{(a)}$, we obtain
\begin{align*}
     \mathscr{L}\left(\frac{|A|^2}{(H-\lambda)^2}\right)+\left\langle\nabla\frac{|A|^2}{(H-\lambda)^2},\nabla \log(H-\lambda)^2\right\rangle\geq0.
\end{align*}
Since $\frac{|A|^2}{(H-\lambda)^2}$ attains a local  maximum, the maximum principle implies that $\frac{|A|^2}{(H-\lambda)^2}=c$, where $c\in\mathbb{R}$ is a nonnegative constant. Since $H-\lambda\neq0$,  either $|A|=\sqrt{c}(H-\lambda)$ or  $|A|=-\sqrt{c}(H-\lambda)$. 
If $c=0$, $U$ must be contained in  a hyperplane.  If $c>0$, from  \eqref{e2}, it follows that   
	\begin{align}\label{l-g9}
		\mathscr{L}|A|+\left(|A|^2+\frac{1}{2} \right)|A|=-\frac{\lambda |A|}{2(H-\lambda)}. 
	\end{align} 
	Combining  \eqref{e4} and \eqref{l-g9}, we obtain  
	\begin{align*}
		|\nabla A|^2-|\nabla|A||^2=\frac{\lambda}{(H-\lambda)}\left( (H-\lambda)\textup{tr}A^3-\frac{|A|^2}{2}\right). 
	\end{align*}
	Since $|\nabla A|^2-|\nabla|A||^2\geq0$, using the hypothesis $\textup{(a)}$ and the equality above, we conclude that
		\begin{align}\label{l-s-e-gen}
		    |\nabla A|=|\nabla |A||.
		\end{align}
		Now again,  as in the proof of Theorem \ref{huiss}, \eqref{l-s-e-gen} implies two possibilities: (I) If rank of $A$ is greater than 2, $\nabla A=0$.  By the result of Lawson in \cite{lawson}, it follows that $U$ must be  contained in $\mathbb{S}^l_r(0)\times\mathbb{R}^{n-l}$, $2\leq l\leq n$.  (II) If rank of $A$ is  1, then 
		\begin{align*}
		    H^2=|A|^2=c(H-\lambda)^2.
		\end{align*}
		This implies that $H$ and $|A|$ are constants. In particular, by \eqref{l-s-e-gen}, $\nabla A=0$. By the result of Lawson in \cite{lawson}, $U$ must be  contained in  $\mathbb{S}^1_r(0)\times\mathbb{R}^{n-1}$. \\
		Therefore, we have proved that $\Sigma$ is   locally contained in either a hyperplane or  a $\mathbb{S}^l_r(0)\times\mathbb{R}^{n-l}$, $1\leq l\leq n$. Applying the unique continuation properties for solutions of elliptic equations, we conclude that $\Sigma$ is either  a hyperplane or a generalized cylinder $\mathbb{S}^l_r(0)\times\mathbb{R}^{n-l}$, $1\leq l\leq n$.
\end{proof}

  Now we  prove a result (Proposition \ref{prop-1}) on  the   immersed  complete   $\lambda$-self-expanders   with constant mean curvature. 
In \cite[Corollary 3.2]{ancari2020rigidity}, the first author of the present article and Miranda proved that 
a complete $\lambda$-hypersurface is either a hyperplane or a cylinder   $\mathbb{S}^l_r(0)\times \mathbb{R}^{n-l}$, $1\leq l\leq n$. Since a $\lambda$-self-expander with constant mean curvature is also a $(2H-\lambda)$-hypersurface,  Proposition \ref{prop-1} can  be obtained by Corollary 3.2 in \cite{ancari2020rigidity}. In the following, we give a direct proof for $\lambda$-self-expanders for the sake of completeness.

\begin{proposition}\label{prop-1}
	If $\Sigma$ is a complete  $\lambda$-self-expander  immersed  in $\mathbb{R}^{n+1}$ with constant mean curvature, then $\Sigma$  must be either a hyperplane or a cylinder   $\mathbb{S}^l_r(0)\times \mathbb{R}^{n-l}$, $1\leq l\leq n$.
	
\end{proposition}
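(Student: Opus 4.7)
The plan is to reduce the classification to Lawson's theorem on hypersurfaces with parallel second fundamental form, by first observing that constancy of $H$ propagates to the support function $\langle x,\mathbf{n}\rangle$ and to $|A|^{2}$, and then upgrading these scalar constancy statements to $\nabla A\equiv 0$ via the Simons-type identities of Section~\ref{simons}.

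First, constancy of $H$ combined with the defining equation $H=-\langle x,\mathbf{n}\rangle/2+\lambda$ yields $\langle x,\mathbf{n}\rangle\equiv c:=2(\lambda-H)$. Differentiating this identity along any tangent direction $e_{i}$ and using $\bar{\nabla}_{e_{i}}\mathbf{n}=-A(e_{i})$ gives $A(x^{\top})\equiv 0$, so $x^{\top}$ lies in $\ker A$ everywhere. At the same time, $\mathscr{L}H=0$ in \eqref{e2} reads $|A|^{2}(H-\lambda)=-H/2$. If $H=\lambda$, this forces $H=\lambda=0$ and $\Sigma$ becomes a complete minimal hypersurface with $\langle x,\mathbf{n}\rangle\equiv 0$; the position vector is then everywhere tangent to $\Sigma$, so the radial rays $t\mapsto e^{t}p$ lie in $\Sigma$, and a standard completeness argument at the origin forces $\Sigma$ to be a hyperplane through the origin.

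Otherwise $H\neq\lambda$ and $|A|^{2}=-H/(2(H-\lambda))$ is a nonzero constant. The quickest route is to notice that $H-\langle x,\mathbf{n}\rangle/2\equiv 2H-\lambda$, so $\Sigma$ is simultaneously a $(2H-\lambda)$-hypersurface of constant mean curvature, and to invoke Corollary~3.2 of \cite{ancari2020rigidity}. For the promised self-contained argument one needs $\nabla A\equiv 0$ directly. Imposing $\mathscr{L}|A|^{2}=0$ and $\nabla|A|=0$, the scalar identities \eqref{e3} and \eqref{e4} both reduce to
\begin{equation*}
|\nabla A|^{2}=\frac{|A|^{2}}{2}+|A|^{4}+\lambda\,\textup{tr}\,A^{3},
\end{equation*}
so no new information comes from the scalar Simons equations alone. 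The missing input must come from the tensorial identity \eqref{ee5}: differentiating it once and contracting with $\nabla A$, and using the Codazzi identity together with the structural constraint $A(x^{\top})\equiv 0$, produces the second-order relation that forces $|\nabla A|\equiv 0$.

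Once $\nabla A\equiv 0$, Lawson's theorem \cite[Theorem~4]{lawson} implies that $\Sigma$ is a hyperplane, a sphere $\mathbb{S}^{n}_{r}(0)$, or a generalized cylinder $\mathbb{S}^{l}_{r}(0)\times\mathbb{R}^{n-l}$ with $1\le l\le n-1$, and each of these is a $\lambda$-self-expander for a suitable radius (Section~\ref{le-preli}). The principal obstacle is precisely the promotion from the scalar identity $\mathscr{L}|A|^{2}=0$ to the tensorial statement $\nabla A\equiv 0$: both natural scalar Simons identities carry the same information, so the tensor form \eqref{ee5} must be exploited to close the argument.
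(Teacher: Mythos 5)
Your overall skeleton is sound: the case analysis ($H=0$, $H=\lambda$, and $H\neq\lambda$ with $|A|^2=-H/(2(H-\lambda))$ a nonzero constant), the appeal to Lawson's theorem once $\nabla A\equiv 0$ is known, and the observation that a constant-$H$ $\lambda$-self-expander is a $(2H-\lambda)$-hypersurface so that Corollary~3.2 of \cite{ancari2020rigidity} applies, are all correct --- indeed the paper itself records that last shortcut before giving a direct proof. The problem lies in your ``self-contained'' argument, which is where the actual content of the proposition sits. You assert that no new information can be extracted from scalar Simons-type identities and that the conclusion ``must'' come from differentiating the tensorial equation \eqref{ee5} once more and contracting with $\nabla A$, using $A(x^{\top})\equiv 0$. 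That step is never carried out; as written it is a placeholder, not a proof, and so the argument has a genuine gap exactly at the point where $|\nabla A|\equiv 0$ has to be established.

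Moreover, the premise behind that deferral is false. You compared only \eqref{e3} and \eqref{e4}, which are indeed equivalent (one is derived from the other), but there is a second, genuinely independent scalar identity available once $H$ is constant: the classical, non-drifted Simons equation
\begin{align*}
\tfrac{1}{2}\Delta|A|^2=|\nabla A|^2-\langle A,Hess(H)\rangle-H\,\textup{tr}A^3-|A|^4 .
\end{align*}
Its right-hand side differs from that of \eqref{e3} because \eqref{e3} was obtained by substituting the self-expander equation into $Hess(H)$, whereas here one uses $Hess(H)=0$ directly. With $H$ and $|A|$ constant this gives $|\nabla A|^2=|A|^4+H\,\textup{tr}A^3$, while \eqref{e3} gives $|\nabla A|^2=\tfrac{|A|^2}{2}+\lambda\,\textup{tr}A^3+|A|^4$; subtracting yields $\textup{tr}A^3=\tfrac{|A|^2}{2(H-\lambda)}$, hence $|\nabla A|^2=|A|^4+\tfrac{H|A|^2}{2(H-\lambda)}=|A|^4-|A|^4=0$, using $|A|^2=-\tfrac{H}{2(H-\lambda)}$ from \eqref{e2}. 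This is precisely the paper's argument: no differentiation of \eqref{ee5} and no use of $A(x^{\top})=0$ is needed. I recommend replacing your last paragraph with this two-identity comparison; the rest of your write-up can stand.
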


\begin{proof}
If  $H=0$,  \eqref{e2} implies that  $\lambda|A|^2=0$. Then $\lambda=0$ or $|A|^2\equiv0$ on $\Sigma$.  If $\lambda=0$, $\Sigma$ is a self-expander and must be a hyperplane through the origin. If $|A|^2=0$,  $\Sigma$ also be a hyperplane. \\

 If $H\neq 0$, it follows  that  from   \eqref{e2},
	\begin{align}\label{e9}
		-\frac{1}{2}H-|A|^2(H-\lambda)=0.
	\end{align}
	This implies that  $H\neq \lambda$.  Then $|A|$ is  constant and $\lambda\neq0$. From \eqref{e3},  we get 
	\begin{align}\label{e11}
		|\nabla A|^2=\frac{|A|^2}{2}+\lambda\textup{tr}A^3+|A|^4.
	\end{align}
	On the other hand, by the following   Simons' equation
	\begin{align*}
		\frac{1}{2}\Delta|A|^2= |\nabla A|^2-\left\langle A,Hess(H)\right\rangle-H\textup{tr}A^3-|A|^4,
	\end{align*}
	it follows that 
	\begin{align}\label{e12}
		|\nabla A|^2=|A|^4+H\textup{tr}A^3.
	\end{align}
	 Then, by \eqref{e11} and \eqref{e12}, we have that
	\begin{align}\nonumber
		|\nabla A|^2=|A|^4+\frac{|A|^2H}{2(H-\lambda)}.
	\end{align}
	This and \eqref{e9} imply that 
	\begin{align}\nonumber
		|\nabla A|=0.
	\end{align}
	So, by a result of Lawson in \cite[Theorem 4]{lawson},  $\Sigma$  must be a generalized cylinder. 

\end{proof}

\section{Further discussion}\label{section7}

In this section, we first prove the following result which may be compared with  Theorem \ref{le-s6}.

\begin{theorem}\label{th-71}
Let $\Sigma$ be a complete immersed  $\lambda$-self-expander  in $\mathbb{R}^{n+1}$. Assume that $H^{\delta}\in L^{2}(\Sigma,e^{\frac{|x|^2}{4}}d\sigma)$ for some  $\delta \in\mathbb{N}$ and 
\begin{align}\label{l-s-e-cao}
    |A|^2((2H-\lambda)^2+2n-\lambda^2)\leq0.
\end{align}
Then  $\Sigma$ is either a hyperplane or a sphere $\mathbb{S}^n_r(0)$.
\end{theorem}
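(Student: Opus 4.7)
The plan is to follow the argument of Theorem~\ref{le-s6}, with the properness assumption replaced by the integrability hypothesis together with a connectedness/dichotomy step.

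First, if $|A|^2\equiv 0$ then $\Sigma$ is totally geodesic and hence a hyperplane. In the remaining case, I claim $|A|^2>0$ everywhere on $\Sigma$. Indeed, $\{|A|>0\}$ is open by continuity, and $\{|A|=0\}$ is also open: if $p\in\partial\{|A|>0\}$, then $|A|(p)=0$, hence $H(p)=0$ (since $H^2\leq n|A|^2$), but choosing a sequence $p_k\to p$ with $|A|(p_k)>0$ and passing to the limit in the inequality $(2H(p_k)-\lambda)^2+2n-\lambda^2\leq 0$ (which holds on $\{|A|>0\}$ by \eqref{l-s-e-cao}) yields $2n\leq 0$, a contradiction. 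Connectedness of $\Sigma$ then forces $|A|^2>0$ everywhere.

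With $|A|^2>0$ throughout $\Sigma$, hypothesis \eqref{l-s-e-cao} reduces to $(2H-\lambda)^2\leq\lambda^2-2n$, which requires $\lambda^2\geq 2n$ and bounds $|H|$ below by the positive constant $c:=\tfrac{|\lambda|-\sqrt{\lambda^2-2n}}{2}>0$. Combining $H^{2\delta}\geq c^{2\delta}$ with the integrability hypothesis yields
\begin{align*}
\int_\Sigma e^{|x|^2/4}\,d\sigma\leq c^{-2\delta}\int_\Sigma H^{2\delta}\,e^{|x|^2/4}\,d\sigma<\infty,
\end{align*}
and in particular $\int_\Sigma e^{-|x|^2/4}\,d\sigma<\infty$. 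This finiteness will serve as the substitute for properness.

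Now apply \eqref{f3} together with the pointwise bound $(2H-\lambda)^2+2n-\lambda^2\leq 0$ to obtain $\mathcal{L}_1|x|^2\leq -|x^{\top}|^2$, and pick intrinsic cutoffs $\varphi_k\in C_0^\infty(\Sigma)$ with $\varphi_k\equiv 1$ on the geodesic ball $B^\Sigma_k(p_0)$, $\varphi_k\equiv 0$ off $B^\Sigma_{2k}(p_0)$, and $|\nabla\varphi_k|\leq 1/k$. Since $\mathcal{L}_1$ is formally self-adjoint with respect to $e^{-|x|^2/4}\,d\sigma$, integration by parts combined with Young's inequality applied to the resulting cross term $-4\int_\Sigma\varphi_k\langle\nabla\varphi_k,x^{\top}\rangle e^{-|x|^2/4}\,d\sigma$ produces
\begin{align*}
\tfrac{1}{2}\int_\Sigma \varphi_k^2\,|x^{\top}|^2\,e^{-|x|^2/4}\,d\sigma
\leq 8\int_\Sigma |\nabla\varphi_k|^2\,e^{-|x|^2/4}\,d\sigma
\leq \frac{8}{k^2}\int_\Sigma e^{-|x|^2/4}\,d\sigma.
\end{align*}
Letting $k\to\infty$ and applying monotone convergence forces $|x^{\top}|\equiv 0$, so $|x|^2$ is constant and $\Sigma=\mathbb{S}^n_r(0)$ by completeness and connectedness.

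The most delicate step is the dichotomy: one needs both the continuity of $|A|$, $H$ and the inequality $H^2\leq n|A|^2$ to rule out partial vanishing of $|A|$ at the boundary of $\{|A|>0\}$, since only there does the hypothesis \eqref{l-s-e-cao} fail to impose a quantitative constraint. Once this topological step is handled, the remainder is a routine weighted cutoff computation whose only substantive input from the integrability hypothesis is the finiteness of $\int_\Sigma e^{-|x|^2/4}\,d\sigma$.
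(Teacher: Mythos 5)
Your proof is correct, but it follows a genuinely different route from the paper's. The paper works with the mean curvature directly: from \eqref{e2} and the pointwise bound $H^2\leq n|A|^2$ it rewrites the hypothesis \eqref{l-s-e-cao} as $H\mathscr{L}H\geq 0$, runs a weighted Caccioppoli argument with weight $e^{\frac{|x|^2}{4}}$ and the integrability of $H^{\delta}$ to force $H^{2(\delta-1)}|\nabla H|^2\equiv 0$, hence $H$ constant, and then invokes the constant-mean-curvature classification (Proposition \ref{prop-1}, ultimately Lawson's theorem) together with the integrability hypothesis to exclude the noncompact cylinders. You instead never touch the Simons-type equation for $H$: your clopen dichotomy on $\{|A|>0\}$ (which is a correct use of the continuity of $H$, $|A|$ and of $H^2\leq n|A|^2$ at a putative boundary point, where the limit of $(2H-\lambda)^2+2n-\lambda^2\leq 0$ would give $2n\leq 0$) isolates the hyperplane case, and in the remaining case the uniform lower bound $|H|\geq\frac{|\lambda|-\sqrt{\lambda^2-2n}}{2}>0$ converts $H^{\delta}\in L^2(\Sigma,e^{\frac{|x|^2}{4}}d\sigma)$ into finiteness of $\int_\Sigma e^{-\frac{|x|^2}{4}}d\sigma$, which then substitutes for the properness used in Theorem \ref{le-s6}: the drifted inequality $\mathcal{L}_1|x|^2\leq -|x^{\top}|^2$ from \eqref{f3} plus the cutoff computation gives $x^{\top}\equiv 0$ and hence the sphere. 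What your approach buys is independence from Lawson's rigidity theorem and from the CMC classification, and it makes transparent that the only role of the integrability hypothesis is to supply finite weighted area; what the paper's approach buys is uniformity with the other arguments of Section \ref{section7} (all of which pivot on showing $H$ is constant) and no need for the topological dichotomy. One cosmetic remark: at the very end one should use Fatou's lemma rather than monotone convergence, since $\varphi_k^2|x^{\top}|^2e^{-\frac{|x|^2}{4}}$ need not be monotone in $k$ on the annuli; this does not affect the argument.
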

\begin{proof}
From hypothesis \eqref{l-s-e-cao} and  \eqref{e2} it follows that
\begin{align*}
    H\mathscr{L}H&=-|A|^2(H-\lambda)H-\frac{H^2}{2}\\
    &\geq-|A|^2(H-\lambda)H-\frac{n|A|^2}{2}\\
                       &=-\frac{1}{4}|A|^2(4(H-\lambda)H+2n)\\
                     & = -\frac{1}{4}|A|^2((2H-\lambda)^2-\lambda^2+2n)\\
                     &\geq0.                            
\end{align*}
	Let $\varphi\in C^{\infty}_0(\Sigma)$. Integrating by part, we have
	\begin{align*}
	\int_{\Sigma}H^{2\delta-2}\varphi^2H(\mathscr{L} H)e^{\frac{|x|^2}{4}}
	 =&-2\int_{\Sigma}H^{2\delta-1}\varphi\langle \nabla\varphi,\nabla H\rangle e^{\frac{|x|^2}{4}}\\
	&-(2\delta-1)\int_{\Sigma}\varphi^2H^{2(\delta-1)}|\nabla H|^2e^{\frac{|x|^2}{4}}\\
	\leq&-\frac{2\delta-1}{2}\int_{\Sigma}\varphi^2H^{2(\delta-1)}|\nabla H|^2e^{\frac{|x|^2}{4}}\\
	&+\frac{2}{2\delta-1}\int_{\Sigma}|\nabla\varphi|^2H^{2\delta}e^{\frac{|x|^2}{4}},
	\end{align*}
	where $\delta\geq1$. Therefore 
	\begin{align*}
	    \frac{2\delta-1}{2}\int_{\Sigma}\varphi^2H^{2(\delta-1)}|\nabla H|^2e^{\frac{|x|^2}{4}}\leq\frac{2}{2\delta-1}\int_{\Sigma}|\nabla\varphi|^2H^{2\delta}e^{\frac{|x|^2}{4}}
	\end{align*}

	Choose  $\varphi=\varphi_j$, where  $\varphi_j$ are  nonnegative cut-off functions satisfying that  $\varphi_j = 1$ on $B_j^{\Sigma}(p)$, $\varphi_j = 0$ on $\Sigma\setminus B_{2j}^{\Sigma}(p)$ and $|\nabla \varphi_j|\leq \frac{1}{j}$. By the monotone convergence theorem and the hypothesis that $H^{\delta}\in L^{2}(\Sigma,e^{\frac{|x|^2}{4}}d\sigma)$ , it follows that, on $\Sigma$,
	$$H^{2(\delta-1)}|\nabla H|^2=0.$$
	This implies that $H$ is constant. If $H=0$, $\Sigma$ is a hyperplane. If $H\neq0$, by Proposition \ref{le-s6},   $\Sigma$ is  at most a cylinder   $\mathbb{S}^l_r(0)\times \mathbb{R}^{n-l}$, $1\leq l\leq n$. However, the integrability hypothesis of $H^{\delta}$ implies that  $ \Sigma$ must be  a sphere.
\end{proof}

In \cite{ancari2020volum},  by using properties on the finiteness of weighted areas and the area growth upper estimate for self-expanders with some restriction on mean curvature, we  proved some theorems that characterize the hyperplanes through the origin as self-expanders  (\cite[Theorems 1.3 and 1.4]{ancari2020volum}). Here, motivated by the work \cite{ancari2020volum}  we will prove  some results that  characterize the generalized cylinders as $\lambda$-self-expanders. In order to prove these results, we first prove the following Theorem \ref{le-ineq5}, which deals with finitiness of weighted areas and area growth estimate of $\lambda$-self-expanders.

\begin{theorem}\label{le-ineq5}
	Let $\Sigma$ be a complete  properly immersed   $\lambda$-self-expander in $\mathbb{R}^{n+1}$.  Assume  that  its mean curvature  $H$ satisfies  $|H|(x)\leq a|x|+b$, $x\in \Sigma$, for  some constants $0\leq a<\frac{1}{2}$ and  $b>0$. Then it holds that, for any  $\alpha >\frac{4a^2}{1-4a^2},$
	\begin{itemize}
		\item[(i)] $\int_{\Sigma}e^{-\frac{\alpha}{4}|x|^2}d\sigma<\infty.$
		\item[(ii)] The area of $B_r(0)\cap\Sigma$ satisfies 
		\begin{align*}
			Area(B_r(0)\cap\Sigma)\leq C(\alpha)e^{\frac{\alpha }{4}r^2},
		\end{align*}
	\end{itemize}
	where $B_r(0)$ denotes  the  round ball in $\mathbb{R}^{n+1}$ of radius $r$ centered at the origin  $0\in \mathbb{R}^{n+1}$. 
	
	In particular,  if  $0\leq a<\frac{1}{2\sqrt{2}}$,  then  the Gaussian weighted area is finite, that is,
	\begin{align}\label{ps}
		\int_{\Sigma}e^{-\frac{|x|^2}{4}}d\sigma<\infty.
	\end{align} 
	
\end{theorem}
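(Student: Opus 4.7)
The plan is to derive a pointwise differential inequality $\mathcal{L}_\alpha|x|^2\leq -c|x|^2+C$ on $\Sigma$ and then integrate it over extrinsic balls to obtain a uniform bound on the weighted area. Combining $\Delta|x|^2=-2H\langle x,{\bf n}\rangle+2n$ (from the computation in Lemma \ref{lem-1}) with the self-expander identity $\langle x,{\bf n}\rangle=2(\lambda-H)$, and using $\langle x,\nabla|x|^2\rangle=2|x^\top|^2$ together with $|x^\top|^2=|x|^2-4(H-\lambda)^2$, I first obtain the identity
\begin{align*}
\mathcal{L}_\alpha|x|^2 = 4(1+\alpha)(H-\lambda)^2 + 4\lambda(H-\lambda) + 2n - \alpha|x|^2.
\end{align*}

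Next, the hypothesis $|H|\leq a|x|+b$ gives $|H-\lambda|\leq a|x|+(b+|\lambda|)$. For any $\eta>0$, the elementary inequality $(p+q)^2\leq(1+\eta)p^2+(1+\tfrac{1}{\eta})q^2$ yields $(H-\lambda)^2\leq(1+\eta)a^2|x|^2+C_\eta$, while $|4\lambda(H-\lambda)|\leq\eta|x|^2+C'_\eta$ by Cauchy--Schwarz. Inserting these into the identity above gives a coefficient of $|x|^2$ equal to $4(1+\alpha)(1+\eta)a^2+\eta-\alpha$. The hypothesis $\alpha>\frac{4a^2}{1-4a^2}$ is precisely the statement $4(1+\alpha)a^2<\alpha$, so $\eta>0$ can be chosen small enough to make this coefficient strictly negative, say equal to $-c<0$. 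Hence $\mathcal{L}_\alpha|x|^2\leq -c|x|^2+C$ on $\Sigma$ for some constants $c,C>0$.

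Now I convert this into an integral estimate. A direct computation gives $\mathrm{div}_\Sigma\bigl(e^{-\frac{\alpha}{4}|x|^2}\nabla|x|^2\bigr)=e^{-\frac{\alpha}{4}|x|^2}\mathcal{L}_\alpha|x|^2$. By properness, $\Sigma\cap B_r(0)$ is compact, and by Sard's theorem applied to $|x|^2$ on $\Sigma$ it is a smooth domain for a.e.\ $r>0$, with outward unit conormal $\nu=x^\top/|x^\top|$, so that $\langle\nabla|x|^2,\nu\rangle=2|x^\top|\geq 0$. The divergence theorem, combined with the pointwise inequality, yields
\begin{align*}
0\leq\int_{\Sigma\cap\partial B_r(0)} 2|x^\top|e^{-\frac{\alpha}{4}r^2}\,d\mathcal{H}^{n-1} \leq \int_{\Sigma\cap B_r(0)}(-c|x|^2+C)e^{-\frac{\alpha}{4}|x|^2}\,d\sigma,
\end{align*}
which gives $c\int_{\Sigma\cap B_r(0)}|x|^2 e^{-\frac{\alpha}{4}|x|^2}d\sigma\leq C\int_{\Sigma\cap B_r(0)} e^{-\frac{\alpha}{4}|x|^2}d\sigma$. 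Setting $V(r):=\int_{\Sigma\cap B_r(0)}e^{-\frac{\alpha}{4}|x|^2}d\sigma$ and splitting the left integral according to whether $|x|\leq R_0$ or $|x|>R_0$ with $R_0^2=2C/c$ produces $(cR_0^2-C)V(r)\leq cR_0^2 V(R_0)$. Since $V(R_0)<\infty$ by properness, $V$ is uniformly bounded, which proves (i) upon letting $r\to\infty$. Item (ii) is immediate from $e^{-\frac{\alpha}{4}r^2}\mathrm{Area}(B_r(0)\cap\Sigma)\leq V(r)\leq C(\alpha)$, and the last assertion follows because $a<\frac{1}{2\sqrt{2}}$ is equivalent to $\frac{4a^2}{1-4a^2}<1$, so $\alpha=1$ is admissible.

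I expect the main delicate point to be the algebraic step that produces the sharp constant $\frac{4a^2}{1-4a^2}$: one must apply the weighted version $(p+q)^2\leq(1+\eta)p^2+(1+\tfrac{1}{\eta})q^2$ rather than the crude $(p+q)^2\leq 2p^2+2q^2$, and carefully track the factor $(1+\alpha)$ coming from $-\alpha|x^\top|^2=-\alpha|x|^2+4\alpha(H-\lambda)^2$. The remainder is a routine application of the divergence theorem, made painless by the favorable sign of the boundary term.
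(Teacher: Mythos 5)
Your proof is correct, and it is worth noting how it relates to the paper's argument. The paper proves this theorem by setting $h=\tfrac{|x|^2}{4}$, computing $\Delta h=H(H-\lambda)+\tfrac n2$ and $\Delta h-\alpha|\nabla h|^2+\alpha h=(1+\alpha)H^2-(1+2\alpha)\lambda H+\tfrac n2+\alpha\lambda^2$, bounding both by quadratics in $r$ with leading coefficient $a_2=(1+\alpha)a^2<\tfrac{\alpha}{4}$, and then invoking Theorem 3.1 of \cite{ancari2020volum} as a black box. Your differential inequality $\mathcal{L}_\alpha|x|^2\leq -c|x|^2+C$ is exactly the content of the paper's second estimate (multiply by $4$ and move $\alpha|x|^2$ to the other side; the threshold $4(1+\alpha)a^2<\alpha$ is the same inequality $\alpha>\tfrac{4a^2}{1-4a^2}$ in both cases), so the pointwise part of the two arguments is identical up to normalization — your $\eta$-splitting versus the paper's retention of a linear term $a_1r$ is immaterial. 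What you do differently is to reprove, rather than cite, the integration step: the divergence identity $\mathrm{div}_\Sigma(e^{-\frac{\alpha}{4}|x|^2}\nabla|x|^2)=e^{-\frac{\alpha}{4}|x|^2}\mathcal{L}_\alpha|x|^2$, the favorable sign of the conormal term $2|x^\top|$ on $\Sigma\cap\partial B_r(0)$ (for a.e.\ $r$ by Sard, which suffices by monotonicity of $V$), and the splitting at $R_0^2=2C/c$ to absorb $V(r)$ — this is essentially the mechanism inside the cited Theorem 3.1. The trade-off is the usual one: the paper's route is shorter but opaque without the companion paper, while yours is self-contained and makes visible where properness enters (finiteness of $V(R_0)$) and why the constant $\tfrac{4a^2}{1-4a^2}$ is the natural threshold. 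Both yield part (ii) by the same trivial observation $e^{-\frac{\alpha}{4}r^2}\,\mathrm{Area}(B_r(0)\cap\Sigma)\leq V(r)$, and the final claim by checking $\tfrac{4a^2}{1-4a^2}<1$ when $a<\tfrac{1}{2\sqrt2}$.
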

\begin{proof}
	Take $h=\frac{|x|^2}{4}$, $x\in\mathbb{R}^{n+1}$. Since $\Sigma$ is properly immersed in $\mathbb{R}^{n+1}$, $h$ is proper on $\Sigma$. Besides, since $H=\lambda-\frac{\langle x,{\bf n}\rangle}{2}$, by \eqref{le-comp} we have that, on $B_r(0)\cap\Sigma$,
	\begin{align*}
		\Delta h=&-H\frac{\langle x,{\bf n}\rangle}{2}+\frac{n}{2}\\
		=&H(H-\lambda)+\frac{n}{2}\\
		\leq&ar^2+(2ab+a|\lambda|)r+(b^2+|\lambda|b+\frac{n}{2}).
	\end{align*}
	In the above, we also has used  the hypothesis: $|H|(x)\leq a|x|+b$, $x\in \Sigma$. We also have that on  $B_r(0)\cap\Sigma$,
	\begin{align*}
		\Delta h-\alpha|\nabla h|^2+\alpha h=&H(H-\lambda)+\frac{n}{2}+\alpha\frac{\langle x,{\bf n}\rangle^2}{4}\\
		=&H(H-\lambda)+\frac{n}{2}+\alpha(H-\lambda)^2\\
		=&(1+\alpha)H^2-\lambda(1+2\alpha)H+\frac{n}{2}+\alpha\lambda^2\\
		\leq&(1+\alpha)a^2r^2+(2ab(1+\alpha)+a|\lambda|(1+2\alpha))r\\
		&+((1+\alpha)b^2+|\lambda|(1+2\alpha)b+\frac{n}{2}+\alpha\lambda^2).
	\end{align*}
	Let $a_2=(1+\alpha)a^2$, $a_1=2ab(1+\alpha)+a|\lambda|(1+2\alpha)$, $a_0=(1+\alpha)b^2+|\lambda|(1+2\alpha)b+\frac{n}{2}+\alpha\lambda^2$ and $\beta=\alpha$.  For  $\alpha >\frac{4a^2}{1-4a^2}$,  where $0\leq a<\frac12$,  it holds that $a_2<\frac{\beta }{4}$. By applying Theorem 3.1 in \cite{ancari2020volum}, we obtain that $\int_{\Sigma}e^{-\frac{\alpha}{4}|x|^2}d\sigma<\infty$ for all  $\alpha >\frac{4a^2}{1-4a^2}$ and the area of $B_r(0)\cap\Sigma$ satisfies that, for all $r>0,$
	\begin{align*}
		Area(B_r(0)\cap\Sigma)\leq C(\alpha)e^{\frac{\alpha }{4}r^2}.
	\end{align*}
	In the particular case of  $a<\frac{1}{2\sqrt{2}}$, since $a<\frac{1}{2\sqrt{2}}$ implies that  $\frac{4a^2}{1-4a^2}<1$,  we may  take $\alpha=1$.
\end{proof}

By an argument analogous to  the ones used in the proofs of Theorem 4.1 in \cite{cheng2013volume} and  Theorem 4 in \cite{ancari2019volume}, we may prove the following  result: Let   $\Sigma$ be a complete  immersed $\lambda$-self-expander in $\mathbb{R}^{n+1}$. If there exists $\alpha>0$ such that   $\int_{\Sigma}e^{-\frac{\alpha}{4}|x|^2}d\sigma<\infty$, then  $\Sigma$ is properly immersed on $\mathbb{R}^{n+1}$.   
Hence Theorem \ref{le-ineq5} has the following consequence.
\begin{cor}
	Let $\Sigma$ be a complete  immersed  $\lambda$-self-expander e in $\mathbb{R}^{n+1}$. Assume that its mean curvature  $ H$  satisfies $|H|(x)\leq a|x|+b$, $x\in\Sigma$, for some constants $0\leq a<\frac{1}{2}$ and $b>0$. Then for $\alpha>\frac{4a^2}{1-4a^2}$  the following statements are equivalent:
	\begin{itemize}
		\item [(i)] $\Sigma$ is properly immersed on $\mathbb{R}^{n+1}$.
		\item [(ii)] There exist constants $C=C(\alpha)$, $\overline{a}_0$, $\overline{a}_1$ and $\overline{a}_2<\frac{\alpha}{4}$, such that 
		\begin{align*}
			Area(B_r(0)\cap\Sigma)\leq C(\alpha)e^{\overline{a}_2r^2+\overline{a}_1r+\overline{a}_0}.
		\end{align*}
		\item [(iii)] $\int_{\Sigma}e^{-\frac{\alpha}{4}|x|^2}d\sigma<\infty.$
	\end{itemize}
\end{cor}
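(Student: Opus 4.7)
The plan is to establish the cyclic chain of implications (i) $\Rightarrow$ (ii) $\Rightarrow$ (iii) $\Rightarrow$ (i), using Theorem \ref{le-ineq5} together with the properness criterion stated in the paragraph preceding the corollary.

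For (i) $\Rightarrow$ (ii), I would apply Theorem \ref{le-ineq5}(ii) with a slightly smaller parameter than the given $\alpha$. Since $\alpha > \frac{4a^2}{1-4a^2}$ is a strict inequality, one may pick $\alpha' \in \bigl(\frac{4a^2}{1-4a^2},\alpha\bigr)$; then Theorem \ref{le-ineq5}(ii), applied to the properly immersed $\Sigma$ with parameter $\alpha'$, yields
\begin{align*}
Area(B_r(0)\cap\Sigma) \leq C(\alpha')\, e^{\frac{\alpha'}{4}r^2}.
\end{align*}
Setting $\overline{a}_2=\frac{\alpha'}{4}<\frac{\alpha}{4}$, $\overline{a}_1=0$, $\overline{a}_0=\log C(\alpha')$ gives the desired area growth bound.

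For (ii) $\Rightarrow$ (iii), I would cut $\Sigma$ into the annular pieces $\Sigma_k:=(B_{k+1}(0)\setminus B_k(0))\cap \Sigma$, $k\geq 0$, and estimate
\begin{align*}
\int_{\Sigma} e^{-\frac{\alpha}{4}|x|^2}\,d\sigma
\;=\;\sum_{k=0}^{\infty}\int_{\Sigma_k} e^{-\frac{\alpha}{4}|x|^2}\,d\sigma
\;\leq\; \sum_{k=0}^{\infty} e^{-\frac{\alpha}{4}k^2}\,Area(\Sigma_k).
\end{align*}
Using (ii), each summand is bounded by $C(\alpha)\exp\bigl(-\frac{\alpha}{4}k^2+\overline{a}_2(k+1)^2+\overline{a}_1(k+1)+\overline{a}_0\bigr)$. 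Because $\overline{a}_2<\frac{\alpha}{4}$, the exponent tends to $-\infty$ as $k\to\infty$, so the series converges and (iii) follows.

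For (iii) $\Rightarrow$ (i), I would simply invoke the properness criterion stated by the authors immediately before the corollary: for a complete immersed $\lambda$-self-expander in $\mathbb{R}^{n+1}$, the condition $\int_{\Sigma}e^{-\frac{\alpha}{4}|x|^2}\,d\sigma<\infty$ for some $\alpha>0$ forces $\Sigma$ to be properly immersed. This is the deepest part of the cycle but is imported from \cite{cheng2013volume,ancari2019volume}; the curvature assumption $|H|(x)\leq a|x|+b$ is not needed for this implication. The only subtle point in the whole argument is the slack between $\alpha'$ and $\alpha$ in the first step, which is precisely what makes the series comparison in (ii) $\Rightarrow$ (iii) converge; this is the sole obstacle and it is resolved by the strict inequality in the hypothesis on $\alpha$.
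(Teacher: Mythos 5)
Your proposal is correct and follows essentially the same route the paper intends: the corollary is stated as an immediate consequence of Theorem \ref{le-ineq5} (giving (i) $\Rightarrow$ (ii) and, after the standard annular summation, (ii) $\Rightarrow$ (iii)) together with the properness criterion quoted just before the corollary (giving (iii) $\Rightarrow$ (i)). Your observation that one must shrink $\alpha$ to some $\alpha'\in\bigl(\tfrac{4a^2}{1-4a^2},\alpha\bigr)$ to obtain the strict inequality $\overline{a}_2<\tfrac{\alpha}{4}$ is a correct and worthwhile clarification of a detail the paper leaves implicit.
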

Another consequence of Theorem  \ref{le-ineq5} is the following  corollary, which deals of  the integrable property of the powers of the norm of mean curvature $ H $.
\begin{cor}\label{le-spn10}
	Let $\Sigma$ be  a complete properly immersed $\lambda$-self-expander in $\mathbb{R}^{n+1}$. Assume that $|H|\leq a|x|+b$, $x\in \Sigma,$  for some constants $0\leq a<\frac{1}{2}$ and $b>0$.  Then for $\delta\geq0$ and $\alpha >\frac{4a^2}{1-4a^2}$, 
	\begin{align}
		\int_{\Sigma}|H|^\delta e^{-\frac{\alpha}{4}|x|^2}d\sigma<\infty.
	\end{align}  
\end{cor}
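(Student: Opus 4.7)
The plan is to reduce this to Theorem~\ref{le-ineq5}(i) by absorbing the polynomial factor $|H|^\delta$ into a slightly smaller Gaussian weight. Specifically, I would first use the growth assumption $|H|\leq a|x|+b$ to bound $|H|^\delta$ by a polynomial in $|x|$, and then exploit the strict inequality $\alpha>\frac{4a^2}{1-4a^2}$ to leave a small exponential margin that dominates this polynomial.

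More concretely, since $\alpha>\frac{4a^2}{1-4a^2}$, I can choose an intermediate value $\alpha'$ with
\begin{equation*}
\frac{4a^2}{1-4a^2}<\alpha'<\alpha.
\end{equation*}
Set $\varepsilon=\frac{\alpha-\alpha'}{4}>0$. Since $(a|x|+b)^\delta e^{-\varepsilon|x|^2}$ is a polynomial-times-Gaussian on $\mathbb{R}^{n+1}$, it is bounded by a constant $M=M(a,b,\delta,\varepsilon)$. Combining this with the hypothesis $|H|\leq a|x|+b$ gives
\begin{equation*}
|H|^\delta e^{-\frac{\alpha}{4}|x|^2}\;\leq\;(a|x|+b)^\delta e^{-\varepsilon|x|^2}\,e^{-\frac{\alpha'}{4}|x|^2}\;\leq\;M\,e^{-\frac{\alpha'}{4}|x|^2}
\end{equation*}
pointwise on $\Sigma$.

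Integrating this inequality and applying Theorem~\ref{le-ineq5}(i) with parameter $\alpha'$ (which is legal precisely because $\alpha'>\frac{4a^2}{1-4a^2}$) yields
\begin{equation*}
\int_{\Sigma}|H|^\delta e^{-\frac{\alpha}{4}|x|^2}\,d\sigma\;\leq\;M\int_{\Sigma}e^{-\frac{\alpha'}{4}|x|^2}\,d\sigma\;<\;\infty,
\end{equation*}
which is the desired conclusion. There is no real obstacle here: the content has already been absorbed into Theorem~\ref{le-ineq5}, and the only point of care is the choice of an intermediate $\alpha'$ so that one exponential factor kills the polynomial growth coming from $|H|^\delta$ while the remaining factor still provides integrability on $\Sigma$.
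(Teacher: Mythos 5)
Your proof is correct and follows essentially the same route as the paper, which simply remarks that the bound $|H|\leq a|x|+b$ together with Theorem~\ref{le-ineq5}(i) yields the conclusion; you have just made explicit the standard detail the paper leaves to the reader, namely choosing an intermediate $\alpha'\in\bigl(\tfrac{4a^2}{1-4a^2},\alpha\bigr)$ so that the leftover Gaussian factor $e^{-\frac{\alpha-\alpha'}{4}|x|^2}$ absorbs the polynomial growth of $|H|^{\delta}$. No gaps.
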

\begin{proof}
	Since $|H|\leq a|x|+b$, it is easily to see that Theorem \ref{le-ineq5} implies the desired conclusion.
\end{proof}

\begin{theorem}\label{le-e18}
	Let $\Sigma$ be a complete  properly immersed $\lambda$-self-expander in $\mathbb{R}^{n+1}$.  Assume  that  its  mean curvature $H$ satisfies $|H|(x)\leq a|x|+b$, $x\in\Sigma$, for  some constants $0\leq a<\frac{1}{2}$ and   $b>0$. If   there exists $\alpha>\frac{4a^2}{1-4a^2}$  such that
	\begin{align}\label{le-eq-cyl-1}
		|A|^2(H-\lambda)H+\frac{H^2}{2}+\frac{\alpha+1}{4}A(x^{\top},x^{\top})H\leq0,
	\end{align}
	
	then $\Sigma$  must be  either a hyperplane or a cylinder   $\mathbb{S}^l_r(0)\times \mathbb{R}^{n-l}$, $1\leq l\leq n$.
	
\end{theorem}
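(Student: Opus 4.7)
The starting observation is that hypothesis \eqref{le-eq-cyl-1} is tailored exactly to the Simons-type identity \eqref{le-spn8}. Multiplying \eqref{le-spn8} by $H$ gives
\begin{equation*}
H\mathcal{L}_\alpha H=-|A|^2(H-\lambda)H-\frac{H^2}{2}-\frac{\alpha+1}{4}A(x^{\top},x^{\top})H,
\end{equation*}
so \eqref{le-eq-cyl-1} is equivalent to saying $H\mathcal{L}_\alpha H\geq 0$ on $\Sigma$. I would then exploit this one-sided inequality through a weighted $L^2$ argument against the natural measure $e^{-\frac{\alpha}{4}|x|^2}d\sigma$, for which $\mathcal{L}_\alpha=\Delta-\frac{\alpha}{2}\langle x^{\top},\nabla\cdot\rangle$ is formally self-adjoint on $\Sigma$.

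Concretely, for a nonnegative cut-off function $\varphi\in C^\infty_0(\Sigma)$, integration by parts gives
\begin{equation*}
\int_{\Sigma}\varphi^2 H\mathcal{L}_\alpha H\,e^{-\frac{\alpha}{4}|x|^2}d\sigma=-\int_{\Sigma}\varphi^2|\nabla H|^2 e^{-\frac{\alpha}{4}|x|^2}d\sigma-2\int_{\Sigma}\varphi H\langle\nabla\varphi,\nabla H\rangle e^{-\frac{\alpha}{4}|x|^2}d\sigma.
\end{equation*}
Since the left-hand side is nonnegative by the previous observation, Young's inequality $|2\varphi H\langle\nabla\varphi,\nabla H\rangle|\leq \tfrac{1}{2}\varphi^2|\nabla H|^2+2H^2|\nabla\varphi|^2$ yields
\begin{equation*}
\frac{1}{2}\int_{\Sigma}\varphi^2|\nabla H|^2 e^{-\frac{\alpha}{4}|x|^2}d\sigma\leq 2\int_{\Sigma}H^2|\nabla\varphi|^2 e^{-\frac{\alpha}{4}|x|^2}d\sigma.
\end{equation*}

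Now I would choose cut-offs $\varphi_j$ with $\varphi_j=1$ on $B_j(0)\cap\Sigma$, $\varphi_j=0$ outside $B_{j+1}(0)\cap\Sigma$, and $|\nabla\varphi_j|\leq 1$. The assumption $|H|(x)\leq a|x|+b$ together with $\alpha>\frac{4a^2}{1-4a^2}$ puts us exactly in the range where Corollary \ref{le-spn10} applies, so $\int_{\Sigma}H^2 e^{-\frac{\alpha}{4}|x|^2}d\sigma<\infty$. Since $|\nabla\varphi_j|$ is supported in $B_{j+1}\setminus B_j$, dominated convergence forces the right-hand side to $0$ as $j\to\infty$, while monotone convergence on the left-hand side gives $\int_{\Sigma}|\nabla H|^2 e^{-\frac{\alpha}{4}|x|^2}d\sigma=0$, so $H$ is constant on $\Sigma$.

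With $H$ constant, Proposition \ref{prop-1} closes the argument: $\Sigma$ is either a hyperplane or a generalized cylinder $\mathbb{S}^l_r(0)\times\mathbb{R}^{n-l}$, $1\leq l\leq n$. The only delicate point is ensuring that the integrability hypothesis of Corollary \ref{le-spn10} really is met for the chosen $\alpha$ (which is guaranteed by the strict inequality $\alpha>\frac{4a^2}{1-4a^2}$), and verifying that the integration by parts is justified for the cut-off sequence; properness of the immersion guarantees that each $\varphi_j$ has compact support on $\Sigma$, so no boundary terms appear. Everything else is a direct calculation built on \eqref{le-spn8} and Corollary \ref{le-spn10}.
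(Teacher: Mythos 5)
Your proposal is correct and follows essentially the same route as the paper: rewrite the hypothesis as $H\mathcal{L}_\alpha H\geq 0$ via \eqref{le-spn8}, integrate by parts against $e^{-\frac{\alpha}{4}|x|^2}d\sigma$ with cut-offs and Young's inequality, invoke Corollary \ref{le-spn10} for the finiteness of $\int_\Sigma H^2 e^{-\frac{\alpha}{4}|x|^2}d\sigma$ to conclude $H$ is constant, and finish with Proposition \ref{prop-1}. The only cosmetic difference is that the paper passes to the limit by monotone convergence on the left-hand side rather than phrasing it through dominated convergence on the right, which changes nothing.
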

\begin{proof} 
	Let $\varphi\in C^{\infty}_0(\Sigma)$. From \eqref{le-spn8} and hypothesis \eqref{le-eq-cyl-1},  we have
	\begin{align*}
		0&\leq\int_{\Sigma}\left(-|A|^2(H-\lambda)H-\frac{1}{2}H^2-\frac{\alpha+1}{4}A(x^{\top},x^{\top})H \right)\varphi^2e^{-\frac{\alpha}{4}|x|^2}\\
		&=\int_{\Sigma}H\varphi^2(\mathcal{L}_\alpha H)e^{-\frac{\alpha}{4}|x|^2}.
	\end{align*}
	Further 
	\begin{align*}
		\int_{\Sigma}H\varphi^2(\mathcal{L}_\alpha H)e^{-\frac{\alpha}{4}|x|^2}	&=-2\int_{\Sigma}H\varphi\langle \nabla\varphi,\nabla H\rangle e^{-\frac{\alpha}{4}|x|^2}-\int_{\Sigma}\varphi^2|\nabla H|^2e^{-\frac{\alpha}{4}|x|^2}\\
		&\leq-\frac{1}{2}\int_{\Sigma}\varphi^2|\nabla H|^2e^{-\frac{\alpha}{4}|x|^2}+2\int_{\Sigma}|\nabla\varphi|^2H^{2}e^{-\frac{\alpha}{4}|x|^2}.
	\end{align*}
	Therefore 
	\begin{align*}
		\frac{1}{2}\int_{\Sigma}\varphi^2|\nabla H|^2e^{-\frac{\alpha}{4}|x|^2}\leq2\int_{\Sigma}|\nabla\varphi|^2H^{2}e^{-\frac{\alpha}{4}|x|^2}.
	\end{align*}
	
	Choose  $\varphi=\varphi_j$, where  $\varphi_j$ are  nonnegative cut-off functions satisfying  $\varphi_j = 1$ on $B_j(0)$, $\varphi_j = 0$ on $\Sigma\setminus B_{j+1}(0)$ and $|\nabla \varphi_j|\leq 1$.  By the hypothesis $|H|(x)\leq a|x|+b$, $x\in\Sigma$, for some constants $0\leq a<\frac{1}{2}$ and $b>0$, Corollary \ref{le-spn10} implies that $\int_{\Sigma}H^2e^{-\frac{\alpha}{4}|x|^2}<\infty$, for any $\alpha>\frac{4a^2}{1-4a^2}$. By the monotone convergence theorem  it follows that
	$$|\nabla H|=0,$$
	which implies that $H$ is constant.  By Proposition \ref{prop-1}, we get that $\Sigma$ must be a generalized cylinder.
	\end{proof}

\begin{theorem}\label{th-74}
	Let $\Sigma$ be a complete properly immersed  $\lambda$-self-expander in $\mathbb{R}^{n+1}$. Assume that its mean curvature $H$ is bounded from below and satisfies  $H(x)\leq a|x|+b$, $x\in\Sigma$, for  some constants $0\leq a<\frac{1}{2}$ and $b>0$. If   there exists $\alpha>\frac{4a^2}{1-4a^2}$ such that 
	\begin{align}\label{le-cyl-2}
		|A|^2(H-\lambda)+\frac{H}{2} +\frac{\alpha+1}{4}A(x^{\top},x^{\top})\geq0,
	\end{align}
	
	then $\Sigma$  must be either a hyperplane or a cylinder   $\mathbb{S}^l_r(0)\times \mathbb{R}^{n-l}$, $1\leq l\leq n$.
		
\end{theorem}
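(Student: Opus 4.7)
The plan is to convert the pointwise hypothesis \eqref{le-cyl-2} into a one-sided inequality for the drifted Laplacian and then to exploit the lower bound on $H$ via a reciprocal-weight trick, in parallel with the proof of Theorem \ref{le-e18} but with one crucial sign reversal. Indeed, combining \eqref{le-cyl-2} with \eqref{le-spn8} yields
\begin{align*}
\mathcal{L}_\alpha H=-\left(|A|^2(H-\lambda)+\frac{1}{2}H+\frac{\alpha+1}{4}A(x^\top,x^\top)\right)\leq 0,
\end{align*}
so $H$ is $\mathcal{L}_\alpha$-superharmonic on $\Sigma$. Since $H$ is bounded from below, I set $M:=1-\inf_\Sigma H$ and $u:=H+M$, which satisfies $u\geq 1$, still obeys $\mathcal{L}_\alpha u\leq 0$, and, by the growth condition $H(x)\leq a|x|+b$, is dominated above by an affine function of $|x|$.

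The main computational step is to test $\mathcal{L}_\alpha u\leq 0$ against $\psi:=\varphi^2/u$ rather than against $u\varphi^2$ (the multiplier used in Theorem \ref{le-e18}), for a compactly supported cutoff $\varphi$. Using self-adjointness of $\mathcal{L}_\alpha$ with respect to $e^{-\frac{\alpha}{4}|x|^2}d\sigma$ and expanding $\nabla(\varphi^2/u)$ gives
\begin{align*}
0\geq \int_\Sigma\frac{\varphi^2}{u}\mathcal{L}_\alpha u\, e^{-\frac{\alpha}{4}|x|^2}d\sigma
=\int_\Sigma\frac{\varphi^2|\nabla u|^2}{u^2}e^{-\frac{\alpha}{4}|x|^2}d\sigma-\int_\Sigma\frac{2\varphi}{u}\langle\nabla\varphi,\nabla u\rangle\, e^{-\frac{\alpha}{4}|x|^2}d\sigma.
\end{align*}
Because $1/u$ is a decreasing function of $H$, the $|\nabla u|^2$ term now appears with the favorable sign. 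Absorbing the cross term via
\begin{align*}
\frac{2\varphi}{u}|\nabla\varphi||\nabla u|\leq \frac{\varphi^2|\nabla u|^2}{2u^2}+2|\nabla\varphi|^2
\end{align*}
produces the energy estimate
\begin{align*}
\int_\Sigma\frac{\varphi^2|\nabla u|^2}{u^2}e^{-\frac{\alpha}{4}|x|^2}d\sigma\leq 4\int_\Sigma|\nabla\varphi|^2 e^{-\frac{\alpha}{4}|x|^2}d\sigma.
\end{align*}

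To close the argument I would choose $\varphi=\varphi_j$ with $\varphi_j\equiv 1$ on $B_j(0)$, $\varphi_j\equiv 0$ off $B_{j+1}(0)$, and $|\nabla\varphi_j|\leq 1$. Because $\alpha>\frac{4a^2}{1-4a^2}$, Theorem \ref{le-ineq5}(i) guarantees $\int_\Sigma e^{-\frac{\alpha}{4}|x|^2}d\sigma<\infty$, so dominated convergence sends the right-hand side to $0$, and monotone convergence on the left forces $\nabla u\equiv 0$ on $\Sigma$. Thus $H$ is constant, and Proposition \ref{prop-1} completes the proof: $\Sigma$ is either a hyperplane or a cylinder $\mathbb{S}^l_r(0)\times\mathbb{R}^{n-l}$ for some $1\leq l\leq n$. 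The central obstacle is precisely the sign of the integrated gradient term after integration by parts: the naive multiplier $(H-\inf H)\varphi^2$ would produce only a lower bound on $\int\varphi^2|\nabla H|^2$, and the hypothesis that $H$ is bounded from below is exactly what permits the reciprocal substitution $\psi=\varphi^2/u$ that reverses this sign.
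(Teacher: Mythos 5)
Your proof is correct and follows essentially the same route as the paper's: both reduce the hypothesis \eqref{le-cyl-2} via \eqref{le-spn8} to the differential inequality $\mathcal{L}_\alpha H\le 0$, then run a logarithmic Caccioppoli estimate with cut-offs against the weight $e^{-\frac{\alpha}{4}|x|^2}$, closing with the finiteness of the weighted area from Theorem \ref{le-ineq5} and the constant-mean-curvature classification of Proposition \ref{prop-1}. The only (minor, and arguably streamlining) difference is that the paper first invokes the strong maximum principle to split into the cases $H\equiv\inf_\Sigma H$ and $H>\inf_\Sigma H$ and then works with $\log(H-\inf_\Sigma H)$, whereas your shift $u=H-\inf_\Sigma H+1\ge 1$ lets you test directly against $\varphi^2/u$ and avoid that dichotomy.
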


\begin{proof}
	Let us fix $C=\inf_{x\in\Sigma}H$. From hypothesis \eqref{le-cyl-2} and \eqref{le-spn8} it follows that
	\begin{align}\label{si1}
		\mathcal{L}_{\alpha}(H-C)\leq0.
	\end{align}
	By  the maximum principle, either $H\equiv C$ or $H> C.$ 
	If $H\equiv C$, by Proposition  \ref{le-s6}, $\Sigma$ is either  hyperplane  or a generalized cylinder. 
	If $H>C$, let us consider $u:=\log(H-C)$.  A computing yields
	\begin{align}\label{e14}
		\Delta u=-|\nabla u|^2+\frac{\Delta H}{H-C}.
	\end{align}     
	Combining   \eqref{si1} and \eqref{e14}, we get
	\begin{align}\label{e16}
		\mathcal{L}_{\alpha} u\leq-|\nabla u|^2.
	\end{align}

	Let us consider the sequence $\varphi_j$ of nonnegative cut-off function satisfying that  $\varphi_j=1$ on $B_j(0)$, $\varphi_j=0$ on $\Sigma\setminus B_{j+1}(0)$ and $|\nabla \varphi_j|\leq 1.$
	Multiplying \eqref{e16} by $\varphi^2_j$ and integrating by parts we obtain 
	\begin{align*}
		\int_{\Sigma}\varphi^2_j|\nabla u|^2e^{-\alpha\frac{|x|^2}{4}}&\leq -\int_{\Sigma}\varphi^2_j(\mathcal{L}_{\alpha} u)e^{-\alpha\frac{|x|^2}{4}}\\
		&=\int_{\Sigma}2\varphi_j\langle \nabla \varphi_j,\nabla u\rangle e^{-\alpha\frac{|x|^2}{4}}\\
		&\leq\frac{1}{2}\int_{\Sigma}\varphi^2_j|\nabla u|^2e^{-\alpha\frac{|x|^2}{4}}+2\int_{\Sigma}|\nabla\varphi_j|^2e^{-\alpha\frac{|x|^2}{4}}.
	\end{align*} 
	Therefore
	\begin{align}\label{le-e17}
		\int_{\Sigma}\varphi^2_j|\nabla u|^2e^{-\alpha\frac{|x|^2}{4}}\leq 4\int_{\Sigma}|\nabla\varphi_j|^2e^{-\alpha\frac{|x|^2}{4}}.
	\end{align}
	From assumption on the mean curvature of $\Sigma$, it follows that $|H|(x)\leq a|x|+b_1$, for some constants $0\leq a<\frac{1}{2}$  and $b_1>0$. Therefore, Theorem  \ref{le-ineq5} implies that $\int_{\Sigma}e^{-\alpha\frac{|x|^2}{4}} <\infty$. Applying the dominated convergence theorem in \eqref{le-e17}, we conclude that 
	\begin{align*}
		\int_{\Sigma}|\nabla u|^2e^{-\alpha\frac{|x|^2}{4}}=0.
	\end{align*}
	In particular   $H$ must be a constant, but this contradicts  the assumption that $H>\inf_{x\in\Sigma}H$.
\end{proof}

\begin{bibdiv}

	\begin{biblist}
		\bib{ancari2020volum}{article}{
			title={Volume properties and rigidity on self-expanders of mean curvature flow},
			author={Ancari, Saul},
			author= {Cheng, Xu},
			journal={Geometriae Dedicata},
			number={2},
            pages={1--25},
            year={2022},
			publisher={Springer}
		}

		\bib{ancari2020rigidity}{article}{
			title={Rigidity theorems for complete $\lambda$-hypersurfaces},
			author={Ancari, Saul},
			author= {Miranda, Igor},
			journal={Archiv der Mathematik},
			volume={117},
            number={1},
            pages={105--120},
            year={2021},
            publisher={Springer}
			
		}

		\bib{ancari2019volume}{article}{
			title={Volume estimates and classification theorem for constant weighted mean curvature hypersurfaces},
			author={Ancari, Saul},
			author={Miranda, Igor},
			journal={The Journal of Geometric Analysis},
			pages={1--19},
			year={2020},
			publisher={Springer}
		}

		\bib{cao2013gap}{article}{
			title={A gap theorem for self-shrinkers of the mean curvature flow in arbitrary codimension},
			author={Cao, Huai-Dong},
			author= {Li, Haizhong},
			journal={Calculus of Variations and Partial Differential Equations},
			volume={46},
			number={3-4},
			pages={879--889},
			year={2013},
			publisher={Springer}
		}
\bib{cheng2016rigidity}{article}{
	title={Rigidity theorems of $\lambda$-hypersurfaces},
	author={Cheng, Qing-Ming},
	author={Ogata, Shiho},
	 author={Wei, Guoxin},
	journal={Communications in Analysis and Geometry},
	volume={24},
	number={1},
	pages={45--58},
	year={2016},
	publisher={International Press of Boston}
}
\bib{cheng2018complete}{article}{
	title={Complete $\lambda$-hypersurfaces of weighted volume-preserving mean curvature flow},
	author={Cheng, Qing-Ming},
	author={Wei, Guoxin},
	journal={Calculus of Variations and Partial Differential Equations},
	volume={57},
	number={2},
	pages={32},
	year={2018},
	publisher={Springer}
}

\bib{cheng2014eigenvalue}{article}{
	title={Eigenvalue estimate and compactness for closed $f$-minimal surfaces},
	author={Cheng, Xu},
	author={Mejia, Tito},
	author={Zhou, Detang},
	journal={Pacific Journal of Mathematics},
	volume={271},
	number={2},
	pages={347--367},
	year={2014},
	publisher={Mathematical Sciences Publishers}
}

\bib{cheng2015stability}{article}{
	title={Stability and compactness for complete $f$-minimal surfaces},
	author={Cheng, Xu},
	author={Mejia, Tito},
	author={Zhou, Detang},
	journal={Transactions of the American Mathematical Society},
	volume={367},
	number={6},
	pages={4041--4059},
	year={2015}
}
\bib{cheng2019volume}{article}{
  title={Volume growth of complete submanifolds in gradient Ricci solitons with bounded weighted mean curvature},
  author={Cheng, Xu},
  author={Vieira, Matheus},
  author={Zhou, Detang},
  journal={International Mathematics Research Notices},
  year={2019}
}
\bib{cheng2013volume}{article}{
	title={Volume estimate about shrinkers},
	author={Cheng, Xu},
	author={Zhou, Detang},
	journal={Proceedings of the American Mathematical Society},
	volume={141},
	number={2},
	pages={687--696},
	year={2013}
}
\bib{cheng2018}{article}{
	title={Spectral properties and rigidity for self-expanding solutions of the mean curvature
		flows},
	author={Cheng, Xu},
	author={Zhou, Detang},
	journal={Mathematische Annalen},
	volume={371},
	number={1-2},
	pages={371--389},
	year={2018},
	publisher={Springer}
}
\bib{colding}{article}{
	title={Generic mean curvature flow I; generic singularities},
	author={Colding, Tobias H},
	author={Minicozzi, William P},
	journal={Annals of mathematics},
	pages={755--833},
	year={2012},
	publisher={JSTOR}
}
\bib{dung2020rigidity}{article}{
	title={Rigidity and vanishing theorems for complete translating solitons},
	author={Dung, Ha Tuan},
	author={Dung, Nguyen Thac},
	author={Huy, Tran Quang},
	journal={arXiv preprint arXiv:2007.09129},
	year={2020}
}
\bib{guang2018gap}{article}{
	title={Gap and rigidity theorems of $\lambda$-hypersurfaces},
	author={Guang, Qiang},
	journal={Proceedings of the American Mathematical Society},
	volume={146},
	number={10},
	pages={4459--4471},
	year={2018}
}
\bib{halldorsson2012self}{article}{
	title={Self-similar solutions to the curve shortening flow},
	author={Halldorsson, Hoeskuldur P},
	journal={Transactions of the American Mathematical Society},
	pages={5285--5309},
	year={2012},
	publisher={JSTOR}
}
\bib{huisken54local}{inproceedings}{
	title={Local and global behaviour of hypersurfaces moving by mean curvature. Differential geometry: partial differential equations on manifolds (Los Angeles, CA, 1990), 175--191},
	author={Huisken, Gerhard},
	booktitle={Proc. Sympos. Pure Math},
	volume={54}
}
\bib{huisken1990asymptotic}{article}{
	title={Asymptotic-behavior for singularities of the mean-curvature flow},
	author={Huisken, Gerhard},
	journal={Journal of Differential Geometry},
	volume={31},
	number={1},
	pages={285--299},
	year={1990}
}
\bib{impera2017rigidity}{article}{
	title={Rigidity results and topology at infinity of translating solitons of the mean curvature flow},
	author={Impera, Debora},
	author={Rimoldi, Michele},
	journal={Communications in Contemporary Mathematics},
	volume={19},
	number={06},
	pages={1750002},
	year={2017},
	publisher={World Scientific}
}
\bib{lawson}{article}{
	title={Local rigidity theorems for minimal hypersurfaces},
	author={Lawson, H Blaine},
	journal={Ann. of Math.(2)},
	volume={89},
	number={1},
	pages={187--197},
	year={1969},
	publisher={JSTOR}
}
\bib{le2011blow}{article}{
	title={Blow-up rate of the mean curvature during the mean curvature flow and a gap theorem for self-shrinkers},
	author={Le, Nam Q},
	author={Sesum, Natasa},
	journal={Communications in Analysis and Geometry},
	volume={19},
	number={4},
	pages={633--659},
	year={2011},
	publisher={International Press of Boston, Inc.}
}
\bib{li2020complete}{article}{
	title={On the Complete 2-Dimensional $\lambda$-Translators with a Second Fundamental form of Constant Length},
	author={Li, Xingxiao},
	author={Qiao, Ruina},
	author={Liu, Yangyang},
	journal={Acta Mathematica Scientia},
	volume={40},
	number={6},
	pages={1897--1914},
	year={2020},
	publisher={Springer}
}
\bib{lopez2018compact}{article}{
	title={Compact $\lambda$-translating Solitons with Boundary},
	author={L\'opez, Rafael},
	journal={Mediterranean Journal of Mathematics},
	volume={15},
	number={5},
	pages={196},
	year={2018},
	publisher={Springer}
}

\bib{lopez2018invariant}{article}{
	title={Invariant surfaces in Euclidean space with a log-linear density},
	author={L\'opez, Rafael},
	journal={Advances in Mathematics},
	volume={339},
	pages={285--309},
	year={2018},
	publisher={Elsevier}
}

\bib{lopez2020ruled}{article}{
  title={Ruled surfaces of generalized self-similar solutions of the mean curvature flow},
  author={L{\'o}pez, Rafael},
  journal={Mediterranean Journal of Mathematics},
  volume={18},
  number={5},
  pages={1--12},
  year={2021},
  publisher={Springer}
}

\bib{ma2020bernstein}{article}{
	title={Bernstein theorem for translating solitons of hypersurfaces},
	author={Ma, Li},
	author={Miquel, Vicente},
	journal={manuscripta mathematica},
	volume={162},
	number={1},
	pages={115--132},
	year={2020},
	publisher={Springer}
}
\bib{martin2015topology}{article}{
	title={On the topology of translating solitons of the mean curvature flow},
	author={Mart{\'\i}n, Francisco},
	author={Savas-Halilaj, Andreas},
	author={Smoczyk, Knut},
	journal={Calculus of Variations and Partial Differential Equations},
	volume={54},
	number={3},
	pages={2853--2882},
	year={2015},
	publisher={Springer}
}
\bib{mcgonagle2015hyperplane}{article}{
	title={The hyperplane is the only stable, smooth solution to the isoperimetric problem in Gaussian space},
	author={McGonagle, Matthew},
	author={Ross, John},
	journal={Geometriae Dedicata},
	volume={178},
	number={1},
	pages={277--296},
	year={2015},
	publisher={Springer}
}
\bib{rimoldi2014classification}{article}{
	title={On a classification theorem for self-shrinkers},
	author={Rimoldi, Michele},
	journal={Proceedings of the American Mathematical Society},
	volume={142},
	number={10},
	pages={3605--3613},
	year={2014}
}

\bib{smoczyk2020self}{article}{
	title={Self-expanders of the mean curvature flow},
	author={Smoczyk, Knut},
	journal={Vietnam Journal of Mathematics},
	pages={1--13},
	year={2021},
	publisher={Springer}
}
\bib{tasayco2017uniqueness}{article}{
	title={Uniqueness of grim hyperplanes for mean curvature flows},
	author={Tasayco, Ditter},
	author={Zhou, Detang},
	journal={Archiv der Mathematik},
	volume={109},
	number={2},
	pages={191--200},
	year={2017},
	publisher={Springer}
}
\bib{wei2019note}{article}{
	title={A note on rigidity theorem of $\lambda$-hypersurfaces},
	author={Wei, Guoxin},
	author={Peng, Yejuan},
	journal={Proceedings of the Royal Society of Edinburgh Section A: Mathematics},
	volume={149},
	number={6},
	pages={1595--1601},
	year={2019},
	publisher={Royal Society of Edinburgh Scotland Foundation}
}
\bib{xin2015translating}{article}{
	title={Translating solitons of the mean curvature flow},
	author={Xin, Yuanlong L},
	journal={Calculus of Variations and Partial Differential Equations},
	volume={54},
	number={2},
	pages={1995--2016},
	year={2015},
	publisher={Springer}
}

	\end{biblist}
\end{bibdiv}
\end{document}